\newcommand{\R}{\mathbb{R}}
\newcommand{\N}{\mathbb{N}}
\newcommand{\Sb}{\mathbb{S}}
\newcommand{\E}{\mathcal{E}}
\newcommand{\M}{\mathcal{M}}
\newcommand{\HH}{\mathcal{H}}
\newcommand{\D}{\mathcal{D}}
\newcommand{\Deps}{\mathcal{D}_{\varepsilon}}
\newcommand{\F}{\mathcal{F}}
\newcommand{\Me}{{\M_{\varepsilon}^f}}
\newcommand{\A}{\mathcal{A}}
\newcommand{\dd}{\mathrm{d}}
\newcommand{\e}{\varepsilon}
\newcommand{\vi}{\varphi}
\newtheorem{theorem}{Theorem}
\newtheorem{proposition}{Proposition}
\newtheorem{definition}{Definition}
\newtheorem{lemma}[proposition]{Lemma}
\newtheorem{remark}{Remark}
\newtheorem{corollary}[proposition]{Corollary}
\title[Extending Cercignani's conjecture to Nordheim]{Extending Cercignani's conjecture results from Boltzmann to Boltzmann-Fermi-Dirac equation}
\author{Thomas Borsoni}
\address{Sorbonne Université, CNRS, Université Paris Cité, Laboratoire Jacques-Louis Lions (LJLL), F-75005 Paris, France}
\email{thomas.borsoni@sorbonne-universite.fr}
\thanks{\noindent Data sharing not applicable to this article as no datasets were generated or analysed during the current study.
}
\date{\today}
\keywords{kinetic theory, Nordheim equation, Boltzmann equation, Boltzmann-Fermi-Dirac, Fermi-Dirac statistics, entropy, entropy methods, Cercignagni's conjecture, Csiszár-Kullback-Pinsker inequality}
\begin{document}

\maketitle

\vspace{-.5cm}

\begin{abstract}
We establish a connection between the relative Classical entropy and the relative Fermi-Dirac entropy, allowing to transpose, in the context of the Boltzmann or Landau equation, any entropy-entropy production inequality from one case to the other; therefore providing entropy-entropy production inequalities for the Boltzmann-Fermi-Dirac operator, similar to the ones of the Classical Boltzmann operator. We also provide a generalized version of the Csiszár-Kullback-Pinsker inequality to weighted $L^p$ norms, $1 \leq p \leq 2$ and a wide class of entropies.
\end{abstract}

\vspace{-.5cm}

\tableofcontents

\section{Introduction}

\medskip

\noindent In the study of quantum dilute gases, Nordheim~\cite{nordhiem1928kinetic}, Uehling and Uhlenbeck~\cite{uehling1933transport} proposed in the early 1930s a kinetic approach, with a modified Boltzmann equation - bearing the names Nordheim equation, quantum Boltzmann equation, or Boltzmann-Fermi-Dirac equation (for fermions). Quoting Villani~\cite[Chapter 5.3]{villanireview}, this description "does not rest on the traditional quantum formalism, [...] it is rather a classical description of interacting particles with quantum features", which nevertheless allows to recover the expected Fermi-Dirac statistics (in the context of fermions) and Bose-Einstein statistics (in the context of bosons).
In this paper, we focus on the case of fermions in which Pauli's exclusion principle is taken into account.

\medskip

\noindent In the context of the study of trend to equilibrium, the use of entropy-entropy production inequalities, allowing to obtain explicit rates of convergence, has been increasing in the recent years. We mention the results of Villani~\cite{villani2003} for the Classical Boltzmann operator, Desvillettes~\cite{desvillettescoulomb}, Desvillettes and Villani~\cite{desvillettesvillani2}, Carrapatoso, Desvillettes and He~\cite{carrapatoso} for the Landau operator, Alonso, Bagland, Desvillettes and Lods~\cite{ABDLentropy,ABDLsoft} and Alonso, Bagland, and Lods~\cite{ABL} for the Landau-Fermi-Dirac (LFD) operator. We also refer the reader to the reviews~\cite{villanireview,devillettes2011}. Up to now, no such results were known for the Boltzmann-Fermi-Dirac (BFD) operator, and the initial goal of this work was to fill this gap. In doing so, we actually established a connection between the Classical and  Fermi-Dirac cases, allowing to transpose any entropy-entropy production result from one setting to the other under suitable conditions. Entropy-entropy production estimates for the BFD operator then directly follow from the ones of Villani~\cite{villani2003} for the Classical Boltzmann operator - the same procedure can be applied to transpose entropy-entropy production estimates for the Landau operator to ones for the LFD operator.

\medskip

    \noindent This paper is constructed as follows. In the rest of this section, we introduce our mathematical setting, useful definitions and further discuss our motivations. In Section~\ref{section:mainresults}, we state our main results, which are the connection between the Fermi-Dirac and Classical relative entropies (Theorem~\ref{theorem:FD} and Proposition~\ref{prop:FDupper}), the entropy-entropy production estimates for the BFD operator (Corollary~\ref{theorem:BFDcerci}), and a generalization of the Csiszár-Kullback-Pinsker inequality (Proposition~\ref{theorem:CKnonopti}). We provide the proof of Theorem~\ref{theorem:FD} in Section~\ref{section:prooftheoremFD}, of Corollary~\ref{theorem:BFDcerci} in Section~\ref{section:proofcorollarybfd}, discuss the case of the Landau / LFD equation in Section~\ref{section:LFD} and provide the proof of Proposition~\ref{theorem:CKnonopti} (along with a more precise result and discussions) in Section~\ref{appendix:CK}. We very briefly discuss the case of bosons in Appendix~\ref{appendix:boseeinstein}. Considerations about a wider class of entropies (with a generalization of Proposition~\ref{theorem:CKnonopti}) are provided in Appendix~\ref{appendix:entropystudy}. In particular, we provide in Proposition~\ref{prop:general} a general formula for the minimizer of a general entropy under the constraints given by general conserved quantities, under a sole condition of existence. There are links with the results of~\cite{bredendesvillettes} where equilibria of collision kernels of type “two particles give two particles” appearing in weak turbulence theory are rigorously obtained. In fact, Proposition~\ref{prop:general} does not replace a rigorous study of equilibria, such as in~\cite{bredendesvillettes}, but gives insights, in a general setting, on to which distributions we should be expected to be equilibria. Finally, technical results relevant to our study are proven in Appendix~\ref{appendix:technicallemmas}.

\medskip

\subsection{The Boltzmann-Fermi-Dirac operator} The spatially homogeneous Boltzmann-Fermi-Dirac (BFD) equation writes
\begin{equation} \label{eq:BFDequation}
    \partial_t f = Q_{\varepsilon}(f,f), \qquad f(0,\cdot) = f^{in},
\end{equation}
where $f \equiv f(t,v) \geq 0$ represents a density of fermions (quantum particles of half-integer spin, e.g. electrons), depending on time $t \geq 0$ and velocity $v \in \R^3$, $f^{in} \equiv f^{in}(v) \geq 0$ is an initial distribution and $Q_{\varepsilon}$ stands for the BFD operator. We refer the reader to~\cite{CCbook,dolbeaultFD,lu2001spatially,luwennberg} for results on the Cauchy problem associated to~\eqref{eq:BFDequation}. In this paper, we focus on the operator $Q_{\varepsilon}$ and, if not specified otherwise, discard the time variable $t$, considering $f \equiv f(v) \geq 0$.

\medskip

\noindent In dimension $3$, the BFD operator writes, for $\varepsilon > 0$ and measurable $f : \R^3 \to \left[0, \varepsilon^{-1} \right]$,
\begin{equation} \label{eq:BFDoperator}
    Q_{\varepsilon}(f,f)(v) := \iint_{\R^3 \times \Sb^{2}} \Big[f' f'_* (1 - \varepsilon f) (1 - \varepsilon f_*) - f f_* (1 - \varepsilon f') (1 - \varepsilon f'_*) \Big] B(v,v_*,\sigma) \, \dd \sigma \, \dd v_*,
\end{equation}
where we used the standard shorthands $f \equiv f(v)$, $f_* \equiv f(v_*)$, $f' \equiv f(v')$, $f'_* \equiv f(v'_*)$ and
\begin{equation}
    v' = \frac{v+v_*}{2} + \frac{|v-v_*|}{2} \, \sigma, \qquad v'_* = \frac{v+v_*}{2} - \frac{|v-v_*|}{2} \, \sigma.
\end{equation}
We insist that any physically meaningful solution $f$ to \eqref{eq:BFDequation} should be such that $1 - \varepsilon f~\geq~0$. As a consequence of time micro-reversibility and Galilean invariance properties, the collision kernel $B \geq 0$ is assumed to satisfy, for $(v,v_*) \in \R^3 \times \R^3$ and  $\sigma \in \Sb^{2}$,
\begin{equation} \label{eqassump:B}
B(v,v_*,\sigma) \equiv B(|v-v_*|,\cos \theta),
\end{equation}
with $\displaystyle \cos \theta = \sigma \cdot \frac{v-v_*}{|v-v_*|}$. At this level of generality, no other assumption is made on $B$. Formally choosing $\varepsilon = 0$ in \eqref{eq:BFDoperator} yields the Classical Boltzmann operator
\begin{equation} \label{eq:Boltzoperator}
    Q_{0}(f,f)(v) := \iint_{\R^3 \times \Sb^{2}} \Big[f' f'_* - f f_*\Big] B(v,v_*,\sigma) \, \dd \sigma \, \dd v_*,
\end{equation}
with the same shorthands as in \eqref{eq:BFDoperator}. Now for $\varepsilon \geq 0$, define
\begin{equation} \label{eqdef:varphieps}
\varphi_{\varepsilon}(x) := \frac{x}{1-\varepsilon x}, \qquad \text{for } x \in \R_+ \; \; \text{ s.t. } \; \; 1 - \varepsilon x > 0.
\end{equation}
Remark that when $\varepsilon = 0$, corresponding to the classical case, we have $\varphi_0 = \mathrm{Id}_{\R_+}$. Moreover, $\varphi_{\varepsilon}$ is a $\mathcal{C}^1$-diffeomorphism from $[0,\varepsilon^{-1})$ to $\R_+$, with ${\varphi_{\varepsilon}}^{-1} = \varphi_{-\varepsilon}$.

\medskip

\noindent Then, when $f$ is such that $1 - \e f > 0$ almost everywhere, Equation \eqref{eq:BFDoperator} can be rewritten in the following form resembling the Classical Boltzmann operator \eqref{eq:Boltzoperator},
\begin{equation} \label{eq:BFDrewritten}
 Q_{\varepsilon}(f,f)(v) := \iint_{\R^3 \times \Sb^{2}} \Big[\varphi_{\varepsilon}(f)' \, \varphi_{\varepsilon}(f)'_* - \varphi_{\varepsilon}(f) \varphi_{\varepsilon}(f)_* \Big] B^f_{\varepsilon}(v,v_*,\sigma) \, \dd \sigma \, \dd v_*,
\end{equation}
using the same shorthands as in \eqref{eq:BFDoperator}, and
\begin{equation} \label{eqdef:Bfeps}
    B_{\varepsilon}^f(v,v_*,\sigma)  = (1 - \varepsilon f(v)) (1 - \varepsilon f(v_*)) (1 - \varepsilon f(v')) (1 - \varepsilon f(v'_*))\, B(v,v_*,\sigma),
\end{equation}
\noindent The collision kernel $B_{\varepsilon}^f$ straightforwardly satisfies the usual assumptions of symmetry and micro-reversibility which imply the weak form of the classical Boltzmann operator, (see for instance~\cite{CCbook}), so that for any test function $\psi : \R^3 \to \R$,
\begin{equation} \label{eq:weakformBFD}
    \int_{\R^3} \psi(v) \, Q_{\varepsilon}(f,f)(v) \, \dd v = -\frac14 \iiint_{\R^3 \times \R^3 \times \Sb^{2}} \Big[\varphi_{\varepsilon}(f)' \, \varphi_{\varepsilon}(f)'_* - \varphi_{\varepsilon}(f) \varphi_{\varepsilon}(f)_* \Big] \Big( \psi' + \psi'_* - \psi - \psi_* \Big) B^f_{\varepsilon} \dd \sigma \, \dd v \, \dd v_*,
\end{equation}
still using the same shorthands as in \eqref{eq:BFDoperator} for $\varphi_{\varepsilon}(f)$ and $\psi$. Formally taking $\psi(v) = 1$, $\psi(v) = v_i$ for $i = 1,2,3$ and $\psi(v) = \displaystyle \frac12|v|^2$ in \eqref{eq:weakformBFD} yields the conservation of mass, momentum and energy
\begin{equation} \label{eq:conservationmassmomentumenergy}
\int_{\R^3} \begin{pmatrix} 1 \\ v \\ \frac12|v|^2 \end{pmatrix} Q_{\varepsilon}(f,f)(v) \, \dd v = 0.
\end{equation}

\medskip

\noindent In the following, we define, for $p \geq 1$ and $s \in \R$, the weighted $L^p_s$ norm of some function $g$ by
\begin{equation} \label{eqdef:Lpsnorm}
\|g\|_{L^p_s} := \left(\int_{\R^3} |g(v)|^p \, (1 + |v|^2)^{\frac{p s}{2}} \, \dd v \right)^{\frac{1}{p}},
\end{equation}
and its essential supremum by $\|g\|_{\infty}$.
\medskip

\subsection{Macroscopic quantities and equilibrium distribution} Throughout this paper, for any nonnegative $f~\in~L^1_2(\R^3)~\setminus~\{0\}$, we denote $(\rho,u,T) \in \R_+^* \times \R^3 \times \R_+^*$  respectively the density, mean velocity and temperature associated to $f$, in the sense that
\begin{equation} \label{eq:normalizationdef}
\int_{\R^3} f(v) \begin{pmatrix} 1 \\ v \\ |v|^2 \end{pmatrix}  \dd v = \begin{pmatrix} \rho \\ \rho u \\ 3 \rho T + \rho|u|^2 \end{pmatrix}.
\end{equation}
We also define the minimal directional temperature of $f$,
\begin{equation} \label{eqdef:minTi}
    T_*(f) := \frac{1}{\rho} \; \underset{e \in \Sb^2}{\inf} \int_{\R^3} f(v) \, (v \cdot e)^2 \, \dd v,
\end{equation}
as well as the so-called Fermi temperature associated to $\rho$ and $\varepsilon > 0$,
\begin{equation} \label{eq:fermitemp}
    T_F(\rho,\e) := \frac{1}{2} \left( \frac{3 \varepsilon  \rho}{4 \pi}  \right)^{\frac23},
\end{equation}
and the ratio
\begin{equation}
\gamma := \frac{T}{T_F(\rho,\e)},
\end{equation}
with the convention $\gamma = + \infty$ when $\varepsilon = 0$. The adimensional number $\gamma$ is thus the ratio between the actual temperature of the system and its Fermi temperature. In~\cite{lu2001spatially}, Lu proved that
\begin{equation} \label{eq:gammaineq}
1 - \varepsilon f \geq 0 \implies \gamma \geq \frac25,
\end{equation}
as well as, for any fixed $\varepsilon >0$,
\begin{align}
    \left(1 - \varepsilon f \geq 0 \text{ and } \gamma = \frac25 \right) &\iff f \equiv F_{\varepsilon}^{\rho} := \varepsilon^{-1} \, \mathbf{1}_{|\cdot - u| \leq \left( \frac{3 \varepsilon  \rho}{4 \pi}  \right)^{\frac13}}, \label{eq:secondeq}\\
     \left(1 - \varepsilon f \geq 0 \text{ and } \gamma > \frac25 \right) &\iff \text{there exists a unique $\varepsilon$-Fermi-Dirac distribution } \Me \text{ such that } \label{eqdef:fermidiracequilibrium}\\
     &\qquad \int_{\R^3} \Me(v) \begin{pmatrix} 1 \\ v \\ |v|^2 \end{pmatrix}  \dd v = \begin{pmatrix} \rho \\ \rho u \\ 3 \rho T + \rho|u|^2 \end{pmatrix}, \nonumber 
\end{align}
where we recall that the $\varepsilon$-Fermi-Dirac distributions are the ones which are such that $\varphi_{\varepsilon}(\Me)$ is a Maxwellian distribution. Equivalently, $\varepsilon$-Fermi-Dirac distributions are the distributions defined by
\begin{equation} \label{eqdef:FermiDiracStat}
    \M_{\varepsilon}^f(v) = \frac{e^{a_{\e} + b_{\e}|v-u_{\e}|^2}}{1 + \varepsilon e^{a_{\e} + b_{\e}|v-u_{\e}|^2}}, \qquad a_{\e} \in \R, \;  u_{\e} \in \R^3, \; b_{\e} < 0. 
\end{equation}
In particular, a \emph{characterisation} of the Fermi-Dirac (or of the Maxwellian, when $\varepsilon = 0$) distribution $\Me$ is that $\log \circ \, \varphi_{\varepsilon}(\Me)$ is a linear combination of $v \mapsto 1$, $v \mapsto v$ and $v \mapsto |v|^2$ (that is, of the conserved quantities). The interested reader shall find further discussions on this matter in Appendix~\ref{appendix:entropystudy}.

\medskip

\noindent As proven in~\cite{lu2001spatially}, $F^{\rho}_{\varepsilon}$ and $\Me$ are the only two possible equilibria of the (spatially homogeneous) BFD dynamic~\eqref{eq:BFDequation}. Contrary to $\Me$ (called first equilibrium in~\cite{lu2001spatially}), which is an attractor, the distribution $F^{\rho}_{\varepsilon}$ (called second equilibrium in ~\cite{lu2001spatially}) is not, and corresponds to a degenerate state, which can actually occur for very cold gases (note that \eqref{eq:gammaineq}--\eqref{eq:secondeq} imply that $F^{\rho}_{\varepsilon}$ is of ``lowest'' temperature).

\medskip

\noindent From now on, we denote by $M^g$ the Maxwellian distribution associated to a distribution $g$, in the sense that 
$$
M^g \equiv \M_0^g. 
$$

\subsection{Fermi-Dirac entropy} We now define the $\varepsilon$-entropy of the distribution $f$.
\begin{definition}
We define, for any $\varepsilon \geq 0$ and $x \in \R_+$ such that $1 - \varepsilon x \geq 0$,
\begin{equation} \label{eqdef:Phieps}
    \Phi_{\varepsilon}(x) := \int_0^x \, \log \varphi_{\varepsilon} (y) \, \dd y,
\end{equation}
where $\varphi_{\varepsilon}$ is defined in \eqref{eqdef:varphieps}. The function $\Phi_{\varepsilon}$ is well-defined as $\log \in L^1_{loc}(\R_+)$. For any nonnegative $f,g \in L^1_2(\R^3)$ such that $1 - \varepsilon f \geq 0$ and $1 - \varepsilon g \geq 0$, we define the $\varepsilon$-entropy of $f$ by
\begin{equation} \label{eqdef:entropy}
    H_{\varepsilon}(f) := \int_{\R^3} \Phi_{\varepsilon}(f)(v) \, \dd v,
\end{equation}
as well as the relative $\varepsilon$-entropy of $f$ and $g$ by
\begin{equation} \label{eqdef:entropyrelat}
    H_{\varepsilon}[f|g] := H_{\varepsilon}(f) - H_{\varepsilon}(g).
\end{equation}
\end{definition}

\smallskip

\begin{remark} Our definition of entropy slightly differs from the usual ones, but is actually equivalent in the problems that we consider. We choose to define the entropy with \eqref{eqdef:Phieps}--\eqref{eqdef:entropy} because, our study relying on a Taylor expansion, it is here the more natural definition. Let us detail the link between the $\varepsilon$-entropy and usual definitions.

\begin{itemize}
    \item \textbf{Classical entropy.} If $\varepsilon = 0$, then $\Phi_0(x) = x \log x - x$ with the convention $0 \log 0 = 0$, the condition $1 - \varepsilon f \geq 0$ is always satisfied, and
    $$
    H_0(f) = \int_{\R^3} (f \log f - f) \, \dd v.
    $$
    Note that often (consider for instance~\cite{villani2003}), the classical entropy is defined by
    $$
    \widetilde{H}_0(f) = \int_{\R^3} f \log f \, \dd v \equiv \int_{\R^3} \widetilde{\Phi}_0(f) \, \dd v,
    $$
    with $\widetilde{\Phi}_0(x) = x \log x$.
    
    \smallskip
    
    \item \textbf{Fermi-Dirac entropy.} If $\varepsilon > 0$, then $\Phi_{\varepsilon}(x) = x \log x + \varepsilon^{-1} (1-\varepsilon x) \log (1 - \varepsilon x)$ with the convention $0 \log 0 = 0$, and the condition $1 - \varepsilon f \geq 0$ is satisfied if and only if $f \leq \varepsilon^{-1}$. In this case, we have
    $$
    H_{\varepsilon}(f) = \int_{\R^3} f \log f + \varepsilon^{-1} (1 - \varepsilon f) \log (1 - \varepsilon f) \, \dd v.
    $$
    Note that for instance in~\cite{ABDLentropy} (up to a minus sign, which is just a choice of convention), the Fermi-Dirac entropy is defined by
    $$
    \widetilde{H}_{\varepsilon}(f) = \varepsilon^{-1} \int_{\R^3} (\varepsilon f) \log (\varepsilon f) +  (1 - \varepsilon f) \log (1 - \varepsilon f) \, \dd v \equiv \int_{\R^3} \widetilde{\Phi}_{\varepsilon}(f) \, \dd v,
    $$
    with $\widetilde{\Phi}_{\varepsilon}(x) = \varepsilon^{-1} \left(\varepsilon x \log (\varepsilon x) +  (1 - \varepsilon x) \log (1 - \varepsilon x) \right)$.
\end{itemize}

\noindent As we are only interested in the \emph{relative} (Classical of Fermi-Dirac) entropy between $f$ and its associated equilibrium distribution $\Me$ (defined in \eqref{eqdef:fermidiracequilibrium}), we see in the following Proposition~\ref{prop:firstformulaHeps} that only $\Phi_{\varepsilon}''$ is actually relevant to the definition. From $\widetilde{\Phi}_{\varepsilon}'' = \Phi_{\varepsilon}''$, we then obtain $\widetilde{H}_{\varepsilon}[f|\Me] = H_{\varepsilon}[f|\Me]$, ensuring the announced equivalence.
\end{remark}

\begin{proposition} \label{prop:firstformulaHeps}
For any $\varepsilon \geq 0$ and nonnegative $f \in L^1_2(\R^3)\setminus \{0\}$ such that $1 - \varepsilon f \geq 0$ and either $\displaystyle \gamma > \frac25$ if $\e > 0$ or $f \in L \log L (\R^3)$ if $\e=0$, denoting by $\Me$ the Fermi-Dirac (or Maxwellian if $\varepsilon = 0$) distribution associated to $f$, we have the following representation of the relative $\varepsilon$-entropy between $f$ and $\Me$,
\begin{align}
    H_{\varepsilon}[f|\Me]  &= \int_0^1 (1 - \tau) \left[\int_{\R^3} (f - \Me)^2 \; \Phi_{\varepsilon}'' \left((1-\tau) \Me + \tau f \right) \, \dd v \right] \dd \tau \label{eq:firstformulaHeps}\\
    &= \int_{\R^3} \int_{\Me(v)}^{f(v)}  \frac{f(v) - x}{\varphi_{\varepsilon}(x)} \, \varphi_{\varepsilon}'(x) \, \dd x \, \dd v \label{eq:firstformulaHeps2}. 
\end{align}
It holds that
\begin{equation} \label{eq:Meminimizes}
H_{\varepsilon}[f|\Me] \geq 0 \quad \text{ and } \quad (H_{\varepsilon}[f|\Me] = 0 \; \iff \; f = \Me).
\end{equation}
\end{proposition}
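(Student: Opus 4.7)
The plan is to derive \eqref{eq:firstformulaHeps} from a Taylor expansion with integral remainder of $\Phi_\varepsilon$ about $\Me(v)$, to then obtain \eqref{eq:firstformulaHeps2} by a one-dimensional affine change of variable in that remainder, and finally to deduce \eqref{eq:Meminimizes} from strict positivity of $\Phi_\varepsilon''$. Differentiating \eqref{eqdef:Phieps} gives $\Phi_\varepsilon'(x) = \log \varphi_\varepsilon(x)$ and $\Phi_\varepsilon''(x) = \varphi_\varepsilon'(x)/\varphi_\varepsilon(x) = 1/(x(1-\varepsilon x)) > 0$ on $(0,\varepsilon^{-1})$, extended by $\Phi_0''(x) = 1/x$ on $\R_+^*$. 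Applied pointwise in $v$, Taylor's formula with integral remainder then reads
\begin{equation*}
\Phi_\varepsilon(f) - \Phi_\varepsilon(\Me) = \Phi_\varepsilon'(\Me)\,(f - \Me) + (f - \Me)^2 \int_0^1 (1-\tau)\, \Phi_\varepsilon''\bigl((1-\tau)\Me + \tau f\bigr) \, \dd \tau.
\end{equation*}
The decisive step when integrating against $\dd v$ is that the linear contribution vanishes: by the characterization recalled right after \eqref{eqdef:FermiDiracStat}, $\Phi_\varepsilon'(\Me) = \log \varphi_\varepsilon(\Me)$ is an affine combination of $1, v_1, v_2, v_3, |v|^2$, and by \eqref{eqdef:fermidiracequilibrium} the distributions $f$ and $\Me$ share the same mass, momentum and energy. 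A Tonelli swap of $\dd v$ and $\dd \tau$, available since the remaining integrand is nonnegative, then produces \eqref{eq:firstformulaHeps}.

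For \eqref{eq:firstformulaHeps2}, at each $v$ with $f(v) \neq \Me(v)$ I would perform the affine change of variable $x = (1-\tau)\Me(v) + \tau f(v)$ in the $\tau$-integral. One checks $\dd x = (f(v) - \Me(v))\, \dd \tau$ and $1-\tau = (f(v) - x)/(f(v) - \Me(v))$, so the remainder rewrites as $\int_{\Me(v)}^{f(v)} (f(v) - x)\, \Phi_\varepsilon''(x) \, \dd x$, which via $\Phi_\varepsilon''(x) = \varphi_\varepsilon'(x)/\varphi_\varepsilon(x)$ is exactly the inner integrand of \eqref{eq:firstformulaHeps2}. Property \eqref{eq:Meminimizes} is then immediate from \eqref{eq:firstformulaHeps}: strict positivity of $\Phi_\varepsilon''$ forces the integrand to be nonnegative and to vanish only on $\{f = \Me\}$, yielding the claimed equivalence $H_\varepsilon[f|\Me] = 0 \Leftrightarrow f = \Me$ a.e.

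The most delicate point I expect is the singularity $\Phi_\varepsilon''(x) \sim 1/(\varepsilon(1-\varepsilon x))$ as $x \to \varepsilon^{-1}$, which matters on sets where $f$ saturates at $\varepsilon^{-1}$. The factor $(1-\tau)$ provides the needed cancellation: since $\gamma > 2/5$ guarantees $\Me(v) < \varepsilon^{-1}$ pointwise, along the segment from $\Me(v)$ to $f(v)$ one has $1 - \varepsilon((1-\tau)\Me + \tau f) \geq (1-\tau)(1-\varepsilon \Me)$, so $(1-\tau)\,\Phi_\varepsilon''((1-\tau)\Me + \tau f)$ stays bounded in $\tau$, keeping the Tonelli application valid even when the relative entropy is \emph{a priori} infinite. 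The endpoint $x \to 0$ is handled similarly since $(f-\Me)^2$ vanishes wherever $\Me$ does, and the classical case $\varepsilon=0$ is covered by the $L\log L$ hypothesis, while the vanishing of the linear term is itself licit because $\log \varphi_\varepsilon(\Me) = a + b|v-u|^2$ is polynomial in $v$ and $f, \Me \in L^1_2$.
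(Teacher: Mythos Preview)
Your proof is correct and follows essentially the same route as the paper: a Taylor expansion of $\Phi_\varepsilon$ about $\Me$, cancellation of the linear term via the fact that $\Phi_\varepsilon'(\Me)=\log\varphi_\varepsilon(\Me)$ is a linear combination of the collision invariants, then the affine change of variable $x=(1-\tau)\Me+\tau f$ and positivity of $\Phi_\varepsilon''$. Your additional care with the endpoint singularities and the Tonelli justification goes slightly beyond what the paper spells out, though note that $\Me$ is strictly positive everywhere, so the remark ``$(f-\Me)^2$ vanishes wherever $\Me$ does'' is vacuous; the relevant control at small $x$ comes instead from the factor $(1-\tau)$ and the integrability in $v$.
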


\medskip

\noindent The reader will find the proof of \eqref{eq:firstformulaHeps} and \eqref{eq:Meminimizes} in Appendix~\ref{appendix:entropystudy} - see Proposition \ref{prop:general}. We just point out here that this comes straightforwardly from a Taylor expansion of the function $\Phi_{\varepsilon}$, and that \eqref{eq:Meminimizes} is a direct consequence of \eqref{eq:firstformulaHeps} because $\Phi_{\varepsilon}'' > 0$ on $(0,\e^{-1})$. Equation~\eqref{eq:firstformulaHeps2} comes directly from $\Phi''_{\varepsilon} = \frac{\varphi_{\varepsilon}'}{\varphi_{\varepsilon}}$ and the change of variable, for fixed $v$, $x = (1-\tau) \, \Me(v) + \tau \, f(v)$ inside \eqref{eq:firstformulaHeps}.

\begin{remark}
We point out that for any $0< x <\varepsilon^{-1}$, we have
$$
\Phi_{\varepsilon}''(x) = \frac{\varphi_{\varepsilon}'(x)}{\varphi_{\varepsilon}(x)} = \frac{1}{x(1 - \varepsilon x)}.
$$
Hence for any $\tau \in (0,1)$, $(1-\tau) \Me + \tau f \in (0, \varepsilon^{-1})$, and $\Phi_{\varepsilon}'' \left((1-\tau) \Me + \tau f \right)$ is thus well-defined.

\smallskip

\noindent Moreover, with $\varepsilon = 0$ (Classical case), Equations~\eqref{eq:firstformulaHeps}--\eqref{eq:firstformulaHeps2} write
\begin{align}
     H_0 \left[f|M^f \right] &= \int_0^1 (1 - \tau) \left[\int_{\R^3} \frac{(f - M^f)^2}{(1-\tau) M^f + \tau f } \, \dd v \right] \dd \tau \label{eq:firstformulaH02}\\
     &= \int_{\R^3} \int_{M^f(v)}^{f(v)} \frac{f(v)-x}{x} \, \dd x \, \dd v  \label{eq:firstformulaH01}. 
\end{align}
\end{remark}

\medskip

\subsection{Entropy production} We finally define the entropy production $\D_{\varepsilon}$ of the BFD operator by
\begin{equation} \label{eqdef:Deps1}
    \Deps(f) :=  - \int_{\R^3}  \log  \varphi_{\varepsilon}(f) \, Q_{\varepsilon}(f,f)(v) \, \dd v.
\end{equation}
We get, taking $\psi = \log \varphi_{\varepsilon}(f)$ in \eqref{eq:weakformBFD},
\begin{equation} \label{eqdef:Deps2}
 \Deps(f) = \frac14 \iiint_{\R^3 \times \R^3 \times \Sb^{2}} \Big[\varphi_{\varepsilon}(f)' \, \varphi_{\varepsilon}(f)'_* - \varphi_{\varepsilon}(f) \, \varphi_{\varepsilon}(f)_* \Big]  \log \left( \frac{\varphi_{\varepsilon}(f)' \varphi_{\varepsilon}(f)'_*}{\varphi_{\varepsilon}(f) \, \varphi_{\varepsilon}(f)_*} \right)  B^f_{\varepsilon} \, \dd \sigma \, \dd v \, \dd v_* \geq 0.
\end{equation}
Formally, a solution $f$ to the BFD equation \eqref{eq:BFDequation} satisfies
$$
\frac{\dd}{\dd t} H_{\varepsilon}(f(t,\cdot)) = - \Deps(f(t,\cdot)),
$$
or, using the relative entropy,
\begin{equation} \label{eq:entropydissip}
\frac{\dd}{\dd t} H_{\varepsilon}[f(t,\cdot)|\M_{\e}^{f^{in}}] = - \Deps(f(t,\cdot)).
\end{equation}
The nonnegativity of $\Deps$ then ensures the second principle of thermodynamics.
\subsection{Trend to equilibrium and Cercignani's conjecture}
The relative entropy between a solution at a given time $t$ and its associated equilibrium is a very useful quantity to estimate how close to equilibrium the system is, and \eqref{eq:Meminimizes} in Proposition~\ref{prop:firstformulaHeps} makes it a suitable tool into proving convergence towards equilibrium. Indeed, if one shows for some solution $f$ that 
$$
H_{\varepsilon}[f(t,\cdot)|\M_{\e}^{f^{in}}] \underset{t \to \infty}{\longrightarrow} 0,
$$
then the Csiszár-Kullback-Pinsker (CKP) inequalities, discussed below in Proposition~\ref{theorem:CKnonopti} and Section~\ref{appendix:CK} yield
$$
\|f(t,\cdot) - \M_{\e}^{f^{in}}\|_{L^1_2} \underset{t \to \infty}{\longrightarrow} 0,
$$
with a rate of convergence related to the one of the relative entropy. In the study of large-time behaviour of the solutions to the (Classical of Fermi-Dirac) Boltzmann equation, the relative entropy is thus a major tool. The idea of entropy-entropy production methods is to prove an inequality of the type
$$
\Deps(f) \geq \Theta(H_{\varepsilon}[f|\Me]),
$$
for some suitable function $\Theta$ (typically, a linear or power law function), allowing to obtain, combined with \eqref{eq:entropydissip} and using a Gronwall-type argument, an explicit rate of the convergence towards $0$ of the relative entropy - thus of the $L^1_2$ distance, using the CKP inequality.

\bigskip

\noindent In the early 80's, Cercignani conjectured in~\cite{cercioriginal}, \emph{in the context of the Classical Boltzmann equation} with suitable collision kernels, that for any distribution $f$, there exists some $\lambda > 0$ depending only on density, temperature and upper bound on the entropy of $f$ (this is called the strong form of Cercignani's conjecture) such that
$$
\D_0(f) \geq \lambda \, \rho \, H_0[f|M^f],
$$
where we recall that $\rho$ is the density associated to $f$, $\D_0$ is the entropy production in the classical case, $H_0$ the classical entropy and $M^f$ the Maxwellian associated to $f$. It is now known that this is false in general (see the negative result of Bobylev and Cercignani~\cite{bobylev1999}), however Villani proved in~\cite{villani2003} the strong form of this conjecture when the collision kernel $B$ is "super-quadratic", as well as a weak form $\D_0(f) \geq C(f) \, H_0[f|M^f]^{1 + \delta}$, holding for all $\delta > 0$, for general collision kernels (and $C(f)$ depending on various quantities related to $f$). We mention the cornerstone papers of Toscani and Villani \cite{toscanivillani99,toscanivillani00} on this topic, and refer the reader to~\cite{devillettes2011} for a review of Cercignani's conjecture results for the Boltzmann and Landau equations.

\medskip

\noindent Similar ideas hold in the study of the Landau equation (we shall talk about Cercignani's conjecture for the Landau equation), this topic is further addressed in Section~\ref{section:LFD}.

\medskip

\noindent \textbf{The motivation of this work is to provide Cercignani's conjecture type results in the case of the Boltzmann-Fermi-Dirac equation, uniformly with respect to $\varepsilon > 0$.}

\medskip

\noindent The simple yet fundamental idea which is at the root of this work is the fact that, whenever $\varepsilon > 0$, assuming $1 - \varepsilon f \geq \kappa_0$ for some $\kappa_0 \in (0,1)$ straightforwardly implies
\[
B(v,v_*,\sigma) \geq B^f_{\varepsilon}(v,v_*,\sigma) \geq \kappa_0^4 \, B(v,v_*,\sigma),
\]
so that, as $\log$ in nonincreasing,

\begin{equation} \label{eq:compareproductions}
\D_0 \left( \varphi_{\varepsilon}(f) \right) \geq \Deps(f) \geq \kappa^4_0 \, \D_0 \left( \varphi_{\varepsilon}(f) \right).
\end{equation}

\noindent Therefore, relatively to $\kappa_0$, the quantity $\Deps(f)$ (the Fermi-Dirac entropy production associated to $f$) behaves like $\D_0\left( \varphi_{\varepsilon}(f) \right)$ (the Classical entropy production associated to $\varphi_{\varepsilon}(f)$). Our main results, Theorem~\ref{theorem:FD} and Proposition~\ref{prop:FDupper}, allow to state a similar property for the relative entropies. We find that, relatively to $\kappa_0$, the quantity $H_{\varepsilon}[f|\Me]$ (the Fermi-Dirac relative-entropy between $f$ and $\Me$) behaves like $H_0[\varphi_{\varepsilon}(f)|M^{\varphi_{\varepsilon}(f)}]$ (the Classical relative entropy between $\varphi_{\varepsilon}(f)$ and $M^{\varphi_{\varepsilon}(f)}$). This entails that \textbf{any entropy-entropy production inequality in the Classical case yields the same inequality} (with an almost identical constant) \textbf{in the Fermi-Dirac case}; this corresponds to Corollary~\ref{theorem:BFDcerci} (and Proposition~\ref{theorem:LFDcerci} for the Landau/Landau-Fermi-Dirac adaptation). The same holds for counter-examples to entropy-entropy production inequalities.

\section{Main results}  \label{section:mainresults}

\noindent In this section, we state our main results.

\subsection{Comparison of the Fermi-Dirac and Classical relative entropies} Theorem~\ref{theorem:FD} is the main contribution of this work, as it establishes a strong link between the different entropies, allowing to transfer ``positive'' Cercignani's conjecture results from the classical to the fermionic case.

\begin{theorem} \label{theorem:FD} 
\textbf{Lower-bound inequality.} For any $\varepsilon > 0$ and nonnegative $f \in L^1_2(\R^3) \setminus \{0\}$ such that
$$
1 - \varepsilon f > 0 \quad \text{and} \qquad \frac{f}{1-\varepsilon f} \in L^1_2(\R^3) \cap L \log L (\R^3),
$$
we have
\begin{equation}\label{eq:theorementropiesFD2}
 H_0\left[ \left. \frac{f}{1 - \varepsilon f} \right| M^{\frac{f}{1 - \varepsilon f}} \right] \geq  H_{\varepsilon}[f|\Me].
\end{equation}
\end{theorem}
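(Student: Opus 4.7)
The plan is to introduce an auxiliary $\varepsilon$-Fermi-Dirac distribution adapted to $g := \varphi_\varepsilon(f) = f/(1-\varepsilon f)$ rather than to $f$, and then combine the variational property of $\Me$ among $\varepsilon$-Fermi-Dirac distributions with the fact that $\varphi_{-\varepsilon} = \varphi_\varepsilon^{-1}$ is an increasing $1$-Lipschitz map of $\R_+$.

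\medskip

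First I would set $\widetilde{f} := \varphi_{-\varepsilon}(M^g) = M^g/(1+\varepsilon M^g)$. Since $\varphi_\varepsilon(\widetilde{f}) = M^g$ is Maxwellian, $\widetilde{f}$ is an $\varepsilon$-Fermi-Dirac distribution in the sense of~\eqref{eqdef:FermiDiracStat}, and $\log\varphi_\varepsilon(\widetilde{f}) = \log M^g$ is affine in $1, v, |v|^2$. Using $\Phi_\varepsilon' = \log\varphi_\varepsilon$ and the moment-matching cancellation $\int\log\varphi_\varepsilon(\Me)(f-\Me)\,dv = 0$, the relative entropy (Proposition~\ref{prop:firstformulaHeps}) rewrites in Bregman-type form as $H_\varepsilon[f|\Me] = \int[\Phi_\varepsilon(f) - \Phi_\varepsilon(\Me) - \log\varphi_\varepsilon(\Me)(f-\Me)]\,dv$. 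The same moment-matching applied with $\log\varphi_\varepsilon(\widetilde{f}) = \log M^g$ affine in conserved quantities gives $\int\log\varphi_\varepsilon(\widetilde{f})(f-\Me)\,dv = 0$, and a direct calculation then yields
\[
\int_{\R^3}[\Phi_\varepsilon(f) - \Phi_\varepsilon(\widetilde{f}) - \log\varphi_\varepsilon(\widetilde{f})(f-\widetilde{f})]\,dv - H_\varepsilon[f|\Me] = \int_{\R^3}[\Phi_\varepsilon(\Me) - \Phi_\varepsilon(\widetilde{f}) - \log\varphi_\varepsilon(\widetilde{f})(\Me-\widetilde{f})]\,dv \geq 0,
\]
the nonnegativity coming from convexity of $\Phi_\varepsilon$. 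So the left-hand side dominates $H_\varepsilon[f|\Me]$.

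\medskip

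Next, by Taylor's formula (as in~\eqref{eq:firstformulaHeps2}), using $\Phi_\varepsilon'' = \varphi_\varepsilon'/\varphi_\varepsilon$, that left-hand side equals $\int_{\R^3}\int_{\widetilde{f}(v)}^{f(v)}(f(v)-x)\varphi_\varepsilon'(x)/\varphi_\varepsilon(x)\,dx\,dv$. The substitution $y = \varphi_\varepsilon(x)$, together with $\varphi_\varepsilon(\widetilde{f}) = M^g$, $\varphi_\varepsilon(f) = g$, and $f = \varphi_{-\varepsilon}(g)$, transforms it into
\[
\int_{\R^3}\int_{M^g(v)}^{g(v)}\frac{\varphi_{-\varepsilon}(g(v))-\varphi_{-\varepsilon}(y)}{y}\,dy\,dv.
\]
Since $\varphi_{-\varepsilon}'(z) = (1+\varepsilon z)^{-2} \in (0,1]$ on $\R_+$, the quantity $\varphi_{-\varepsilon}(g(v))-\varphi_{-\varepsilon}(y)$ has the same sign as $g(v) - y$ but smaller absolute value. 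A case analysis on whether $g(v) \geq M^g(v)$ or not (needed to handle the direction of the signed integral in $y$) then yields the pointwise bound
\[
\int_{M^g(v)}^{g(v)}\frac{\varphi_{-\varepsilon}(g(v))-\varphi_{-\varepsilon}(y)}{y}\,dy \leq \int_{M^g(v)}^{g(v)}\frac{g(v)-y}{y}\,dy,
\]
and integrating in $v$ recognizes the right-hand side as $H_0[g|M^g]$, concluding the chain $H_\varepsilon[f|\Me] \leq H_0[g|M^g]$.

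\medskip

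The main obstacle I expect is the sign bookkeeping in the last inequality, since $g$ and $M^g$ are not pointwise ordered and the direction of the $y$-integration affects the inequality direction; the variational upper bound via $\widetilde{f}$ and the Taylor/substitution rewriting are otherwise routine. The structural insight that unlocks the proof is that even though the moments of $\widetilde{f}$ are unrelated to those of $f$, the quantity $\log\varphi_\varepsilon(\widetilde{f}) = \log M^g$ still is affine in the conserved quantities, which is exactly what is needed to deploy the variational/moment-matching step against $\Me$.
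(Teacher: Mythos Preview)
Your proof is correct and genuinely different from the paper's. The paper proves Theorem~\ref{theorem:FD} by a monotonicity argument in $\varepsilon$: it sets $R_g(\varepsilon_*) := H_{\varepsilon_*}[\varphi_{\varepsilon_*}^{-1}(g)\,|\,\M_{\varepsilon_*}^{\varphi_{\varepsilon_*}^{-1}(g)}]$, shows that $R_g$ is $\mathcal{C}^1$ on $(0,\infty)$ and continuous at $0$, computes $R_g'$ explicitly, and deduces $R_g' \leq 0$ from the fact that $x \mapsto \partial_{\varepsilon_*}(\varphi_{\varepsilon_*}^{-1})(x) = -\varphi_{\varepsilon_*}^{-1}(x)^2$ is decreasing. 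This route requires the regularity of the Fermi--Dirac coefficients $(a_{\varepsilon_*},b_{\varepsilon_*},u_{\varepsilon_*})$ in $\varepsilon_*$ (several technical lemmas in the appendix) and a dominated-convergence argument for the continuity at $0$.

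Your argument bypasses all of this. By inserting the auxiliary distribution $\widetilde{f} = \varphi_{-\varepsilon}(M^g)$ you get a three-point Bregman identity $B_\varepsilon(f,\widetilde{f}) = H_\varepsilon[f|\Me] + B_\varepsilon(\Me,\widetilde{f}) \geq H_\varepsilon[f|\Me]$, and then the substitution $y = \varphi_\varepsilon(x)$ together with the $1$-Lipschitz, increasing nature of $\varphi_{-\varepsilon}$ on $\R_+$ gives $B_\varepsilon(f,\widetilde{f}) \leq H_0[g|M^g]$ directly. The sign bookkeeping you flag is indeed harmless: in both cases the inner integral $\int_{M^g}^{g}(h(g)-h(y))/y\,dy$ is nonnegative for any increasing $h$, and the Lipschitz bound preserves the inequality regardless of the order of the limits. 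Amusingly, the paper itself uses essentially your strategy (Lipschitz bound plus a moment-matching remainder) for the Bose--Einstein analogue in Proposition~\ref{prop:BE}, but not for Theorem~\ref{theorem:FD}. What your approach buys is a shorter, fixed-$\varepsilon$ proof with no differentiation in the quantum parameter and no appeal to the regularity lemmas; what the paper's approach buys is a single framework (the derivative formula for $R_g$) that handles both the lower bound of Theorem~\ref{theorem:FD} and the upper bound of Proposition~\ref{prop:FDupper} via a Gr\"onwall argument.
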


\noindent The hypotheses of Theorem~\ref{theorem:FD} are actually the minimal ones to ensure that every term in~\eqref{eq:theorementropiesFD2} makes sense. The following Proposition~\ref{prop:FDupper} allows to transfer ``negative'' Cercignani's conjecture results from the classical to the fermionic case.

\begin{proposition} \label{prop:FDupper}
\textbf{Upper-bound inequality.} For any $\kappa_0 \in (0,1)$, any $\varepsilon >0$ and nonnegative $f \in L^1_2(\R^3) \setminus \{0\}$ such that
\begin{equation} \label{eqassump:kappa02}
1 - \varepsilon f \geq \kappa_0,
\end{equation}
we have
\begin{equation}\label{eq:theorementropiesFD}
 H_0\left[ \left. \frac{f}{1 - \varepsilon f} \right| M^{\frac{f}{1 - \varepsilon f}} \right] \leq C_0(\kappa_0) \, H_{\varepsilon}[f|\Me],
\end{equation}
with $C_0(\kappa_0) = \exp \left(16 \, (\kappa_0^{-1}-1) \right)$.
\end{proposition}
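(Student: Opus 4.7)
The strategy is to introduce the Maxwellian $\tilde M := \varphi_\e(\Me)$ as a common reference, exploit a change of variables transporting the $\Phi_\e$-Bregman integrand to the $\Phi_0$-one, and then invoke the fact that $M^g$ (with $g := \varphi_\e(f)$) minimises the classical Kullback-Leibler divergence from $g$ among Maxwellians. I would first rewrite $H_\e[f|\Me]$ via the substitution $y = \varphi_\e(x)$ in representation~\eqref{eq:firstformulaHeps2}. Using the identities $\Phi_\e''(x)\,\dd x = \Phi_0''(y)\,\dd y = \dd y/y$ and $\varphi_{-\e}(g) - \varphi_{-\e}(y) = (g-y)/[(1+\e g)(1+\e y)]$, this yields
\begin{equation*}
H_\e[f|\Me] \;=\; \int_{\R^3} \int_{\tilde M(v)}^{g(v)} \frac{g(v) - y}{y\,(1+\e g(v))(1+\e y)}\,\dd y\,\dd v.
\end{equation*}
Since $\tilde M$ is a Maxwellian (by the characterisation following~\eqref{eqdef:FermiDiracStat}), the Pythagorean-type identity $KL(g\|N) = KL(g\|M^g) + KL(M^g\|N)$ (valid for any Maxwellian $N$, as a direct consequence of $\log(M^g/N)$ being polynomial of degree $\leq 2$ and of $g$, $M^g$ sharing all moments of order $\leq 2$) then gives
\begin{equation*}
H_0[g|M^g] \;=\; KL(g\|M^g) \;\leq\; KL(g\|\tilde M) \;=\; \int_{\R^3}\int_{\tilde M(v)}^{g(v)} \frac{g(v) - y}{y}\,\dd y\,\dd v.
\end{equation*}
It therefore suffices to bound $KL(g\|\tilde M) \leq C_0(\kappa_0)\,H_\e[f|\Me]$; the two integrands differ only by the multiplicative factor $(1+\e g(v))(1+\e y)$.

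Next I would extract the pointwise consequences of the hypothesis: $1 - \e f \geq \kappa_0$ gives $1 + \e g = (1-\e f)^{-1} \leq \kappa_0^{-1}$ and hence $g \leq \e^{-1}(\kappa_0^{-1}-1)$ almost everywhere. In the ``good'' sub-region $\{f \geq \Me\}$, equivalently $\{g \geq \tilde M\}$, the integration variable satisfies $y \leq g(v) \leq \e^{-1}(\kappa_0^{-1}-1)$ and so $1+\e y \leq \kappa_0^{-1}$; the pointwise Bregman estimate $D_{\Phi_0}(g,\tilde M)(v) \leq \kappa_0^{-2}\,D_{\Phi_\e}(f,\Me)(v)$ would follow at once on the integrand level, giving even the stronger constant $\kappa_0^{-2}$ on this part of the inequality.

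The main obstacle is the complementary sub-region $\{f < \Me\}$: there $y$ ranges up to $\tilde M(v) = \Me(v)/(1-\e\Me(v))$, which is not controlled by $\kappa_0$ alone, the bound on $f$ not transferring to $\Me$ (as witnessed by the ``second equilibrium'' $F_\e^\rho$). A pointwise comparison must break down: when $\e\Me(v)\to 1$ the classical Bregman integrand diverges like $\tilde M(v) \sim (\e(1-\e\Me(v)))^{-1}$, whereas its Fermi-Dirac analogue only diverges like $\e^{-1}\log(1/(1-\e\Me(v)))$. Overcoming this will require an integrated argument: I expect to split the inner $y$-integration at a carefully chosen threshold and use the genuine Gaussian decay of $\tilde M$ in $v$ together with an exponential-moment (or weighted Hölder) estimate controlling $(1+\e y)$ along the integration path. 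The exponential form $C_0(\kappa_0) = \exp(16(\kappa_0^{-1}-1))$ of the target constant is the natural signature of such an interpolation/tail argument.
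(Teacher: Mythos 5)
Your reduction is sound as far as it goes: the change of variables $y=\varphi_\e(x)$ in~\eqref{eq:firstformulaHeps2}, the Pythagorean inequality $H_0[g|M^g]\le KL(g\|\varphi_\e(\Me))$, and the pointwise factor $(1+\e g)(1+\e y)\le \kappa_0^{-2}$ on $\{f\ge \Me\}$ are all correct. But the proof stops exactly where the difficulty begins, and the sketched fix for the region $\{f<\Me\}$ would not work as described. There the problem is not a tail issue curable by the Gaussian decay of $\varphi_\e(\Me)$: the uncontrolled factor $1+\e y\le (1-\e\Me(v))^{-1}$ is worst precisely at the core $v\approx u$, where $\varphi_\e(\Me)$ is largest and decay buys nothing. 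What is actually needed is an $L^\infty$ bound on $\varphi_\e(\Me)$ in terms of $\kappa_0$ alone; such a bound exists (Proposition~\ref{prop:kappa1}, combined with Lu's estimate $\gamma\ge\frac25(\e\|f\|_\infty)^{-2/3}\ge\frac25(1-\kappa_0)^{-2/3}$) but only in the regime $\gamma\ge\gamma^\dagger$, i.e.\ when $1-\kappa_0$ is small enough. For general $\kappa_0\in(0,1)$ no pointwise or directly integrated endpoint comparison of the two Bregman integrands closes, and your proposal offers no mechanism to handle this regime.

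The paper's proof avoids the endpoint-to-endpoint comparison altogether. It interpolates in the quantum parameter: with $g=\varphi_\e(f)$ fixed, the map $\e_*\mapsto R_g(\e_*)=H_{\e_*}[\varphi_{\e_*}^{-1}(g)|\M_{\e_*}^{\varphi_{\e_*}^{-1}(g)}]$ is $\mathcal C^1$ with the explicit derivative~\eqref{eq:Rgprime} (Proposition~\ref{prop:rgnonincr}), which yields the log-derivative bound $R_g'(\e_*)\ge -c_g(\e_*)\,R_g(\e_*)$ with $c_g(\e_*)$ controlled by $\|g\|_\infty$ and $\|\M_{\e_*}^{\varphi_{\e_*}^{-1}(g)}\|_\infty$. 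The two regimes are then handled separately along the path: for $\e_*\le\e^\dagger$ one has $\gamma_{\e_*}\ge\gamma^\dagger$ automatically (because $\e_*\|\varphi_{\e_*}^{-1}(g)\|_\infty$ is small) and Proposition~\ref{prop:kappa1} gives $c_g(\e_*)\lesssim\e^{-1}(\kappa_0^{-1}-1)$; for $\e_*\in[\e^\dagger,\e]$ only the trivial bound $\|\M_{\e_*}\|_\infty\le\e_*^{-1}\le(\e^\dagger)^{-1}$ is used, but the interval is short enough that Gr\"onwall still produces a finite exponent. The constant $\exp(16(\kappa_0^{-1}-1))$ is exactly the integral of these coefficients, not the signature of a tail or interpolation-in-$y$ argument. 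To complete your proof you would need either to import this differential argument or to find a genuinely new way to control $\int_{\{f<\Me\}}$, which your outline does not provide.
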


\medskip

\begin{remark}
Inequality~\eqref{eq:theorementropiesFD2} becomes reversed in the Bose-Einstein case. We refer the interested reader to Proposition~\ref{prop:BE} in Appendix~\ref{appendix:boseeinstein}. In fact, we expect that both inequalities, but reversed (and probably with a different constant $C_0$), hold in the Bose-Einstein case, as the core of the proof would be the exact same in this case, only changing $\e$ into $-\e$ (hence the reversal of the inequalities).
\end{remark}

\medskip

\subsection{Entropy-entropy production inequalities for the Boltzmann-Fermi-Dirac (BFD) equation} As a corollary to Theorem~\ref{theorem:FD} and Proposition~\ref{prop:FDupper}, adapting the main results found in Villani~\cite{villani2003}, we obtain the following entropy-entropy production estimates.
 \begin{corollary}
 \label{theorem:BFDcerci}

$\bullet$ \textbf{Cercignani's conjecture result for the BFD operator with super-quadratic kernels} \emph{(adaptation of~\cite[Theorem 2.1]{villani2003})}. Assume that $B$ satisfies \eqref{eqassump:B} and
$$
B(v,v_*,\sigma) \geq 1 + |v-v_*|^2.
$$

\smallskip

\noindent Then, for any $\kappa_0 \in (0,1)$, $\varepsilon \geq 0$ and nonnegative $f \in L^1_2(\R^3) \setminus \{0\}$ such that   $1 - \varepsilon f \geq \kappa_0$, we have

\smallskip

\begin{equation} \label{eq:cercignanisuperquadra}
    \Deps (f)  \geq K(f) \, H_{\varepsilon}[f|\Me],
\end{equation}
with
\begin{equation} \label{eq:Ksuperquadra}
    K(f) =  \frac{2\pi}{7} \kappa_0^5  \min (1, \, T)\, \rho \, T_{*}(f),
\end{equation}  
where $\rho$, $T$, $T_*(f)$ are defined in \eqref{eq:normalizationdef}--\eqref{eqdef:minTi}, $\Deps$, $H_{\varepsilon}$ and $\Me$ are given respectively in \eqref{eqdef:Deps1}--\eqref{eqdef:Deps2}, \eqref{eqdef:entropy}--\eqref{eqdef:entropyrelat} and \eqref{eqdef:fermidiracequilibrium} (note that $T_*(f)$ can be bounded below using the entropy or an $L^{\infty}$ bound~\cite[Proposition 2]{desvillettesvillani2}).
\medskip

$\bullet$ \textbf{Super-linear Cercignani's conjecture result for the BFD operator with general kernels} \emph{(adaptation of~\cite[Theorem 4.1]{villani2003})}. Assume the existence of $B_0 > 0$ and $\beta_+,\beta_- \geq 0$ such that
\begin{equation} 
B(v,v_*,\sigma) \geq B_0 \, \min \left( |v-v_*|^{\beta_+}, \, |v-v_*|^{-\beta_-} \right),
\end{equation}
and $B$ satisfies \eqref{eqassump:B}. We consider $\kappa_0 \in (0,1)$,  $0 \leq f\in L^1_2(\R^3) \setminus \{0\}$ and $\varepsilon \geq 0$ such that {  $1 - \varepsilon f \geq \kappa_0$} and
\begin{equation} \label{eqassump:lowermaxwell}
    \forall \, v \in \R^3, \qquad f(v) \geq K_0 \, e^{-A_0 |v|^{q_0}} \qquad \qquad (K_0 > 0, \quad A_0 > 0, \quad q_0 \geq 2).
\end{equation}
Then for any $\alpha \in (0,1)$ there exists an explicit constant $K_{\alpha}(f)$ depending on $\alpha$, $B_0$, an upper bound of $\varepsilon$ and on $f$ only through $\rho$, $T$, $q_0$, and upper bounds on $A_0$, $1/K_0$, $1 / \kappa_0$, $\|f\|_{L^1_s}$ and $\|f\|_{H^k}$, where $s=s(\alpha,q_0,\beta_-,\beta_+)$ and $k=k(\alpha,s,\beta_-,\beta_+)$, such that
\begin{equation} \label{eq:ineqentropygeneralkernel}
    \Deps (f)  \geq K_{\alpha}(f) \, H_{\varepsilon}[f|\Me]^{1+ \alpha}.
\end{equation}

\medskip

$\bullet$ \textbf{Counter-example to Cercignani's conjecture for the BFD operator with sub-quadratic kernels} \emph{(weak adaptation of~\cite{bobylev1999} and~\cite[Theorem 1.1]{villani2003})}. Assume that $B$ satisfies \eqref{eqassump:B} and that there exists $B_0 > 0$ and $0 \leq \beta < 2$ such that
$$
\int_{\Sb^2} B(v,v_*,\sigma) \, \dd \sigma \leq B_0 \, (1 + |v-v_*|^{\beta}).
$$
Then for any  $\varepsilon \in (0, \frac12)$, we have
$$
\inf \left\{ \left. \frac{\Deps(f)}{H_{\varepsilon}[f|\Me]} \right| \, f \in \mathcal{C}^{\varepsilon}_{1,0,1} \emph{ and }  \, f(v) \geq \frac12 (2 \pi)^{-3/2} \, e^{- |v|^2}, \; \;v \in \R^3 \right\} = 0,
$$
with
$$
f \in \mathcal{C}^{\varepsilon}_{1,0,1} \iff 0 \leq f \in L^1_2(\R^3), \quad 1 - \varepsilon f \geq 0, \quad 1 \leq \rho \leq 1 + 2 \varepsilon, \quad |u| \leq 4 \sqrt{3} \, \varepsilon, \quad  \frac{1}{1 + 2 \varepsilon} \leq T \leq 1 + 2 \varepsilon.
$$

\end{corollary}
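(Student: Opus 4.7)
The three parts share the same translation scheme: set $\tildg := \varphi_\varepsilon(f) = f/(1-\varepsilon f)$, so that whenever $1-\varepsilon f \geq \kappa_0$ one has $f \leq \tildg \leq \kappa_0^{-1} f$ pointwise. The classical Boltzmann operator acts on $\tildg$, and each Fermi-Dirac statement reduces to a classical one by applying Theorem~\ref{theorem:FD} or Proposition~\ref{prop:FDupper} on the entropy side and~\eqref{eq:compareproductions} on the entropy-production side. The case $\varepsilon = 0$ is trivial (then $\tildg = f$), so we focus on $\varepsilon > 0$.

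For the super-quadratic case, \cite[Thm.~2.1]{villani2003} applied to $\tildg$ gives $\D_0(\tildg) \geq \tfrac{2\pi}{7}\min(1,\widetilde T)\,\widetilde\rho\, T_*(\tildg)\, H_0[\tildg|M^{\tildg}]$. From $f \leq \tildg$ the inequalities $\rho \leq \widetilde\rho$ and $\rho T_*(f) \leq \widetilde\rho T_*(\tildg)$ are immediate; moreover, since $u$ minimises $v_0 \mapsto \int_{\R^3} f|v-v_0|^2\,\dd v$, we have $3\widetilde\rho\widetilde T = \int_{\R^3} \tildg|v-\widetilde u|^2\,\dd v \geq \int_{\R^3} f|v-u|^2\,\dd v = 3\rho T$, and combined with $\widetilde\rho \leq \kappa_0^{-1}\rho$ this forces $\widetilde T \geq \kappa_0 T$, hence $\min(1,\widetilde T) \geq \kappa_0 \min(1,T)$. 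Chaining these estimates with $\Deps(f) \geq \kappa_0^4 \D_0(\tildg)$ from~\eqref{eq:compareproductions} and $H_0[\tildg|M^{\tildg}] \geq H_\varepsilon[f|\Me]$ from Theorem~\ref{theorem:FD} produces~\eqref{eq:cercignanisuperquadra}--\eqref{eq:Ksuperquadra}, the factor $\kappa_0^5$ arising as $\kappa_0^4 \cdot \kappa_0$. The general-kernel case is handled identically by invoking~\cite[Thm.~4.1]{villani2003}: the Maxwellian lower bound on $\tildg$ follows from $\tildg \geq f$, while the required $L^1_s$, $H^k$ and $L^\infty$ bounds on $\tildg$ follow from $\tildg \leq \kappa_0^{-1} f$ together with the smoothness of $\varphi_\varepsilon$ on the relevant range, whose derivatives are controlled explicitly by $\kappa_0$ and an upper bound on $\varepsilon$.

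The counter-example runs in the reverse direction. Let $(g_n)$ be the sequence provided by the classical construction of~\cite{bobylev1999} (see also~\cite[Thm.~1.1]{villani2003}) with moments $(1,0,1)$, lower bound $g_n(v) \geq \tfrac12 (2\pi)^{-3/2} e^{-|v|^2}$, uniformly bounded $L^\infty$ norm, and $\D_0(g_n)/H_0[g_n|M^{g_n}] \to 0$. Define $f_n := \varphi_{-\varepsilon}(g_n) = g_n/(1+\varepsilon g_n)$, so that $\varphi_\varepsilon(f_n) = g_n$ and $1-\varepsilon f_n = 1/(1+\varepsilon g_n) \geq \kappa_0$ with $\kappa_0 := 1/(1+\varepsilon\sup_n \|g_n\|_\infty)$. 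Direct estimates based on $f_n \geq g_n(1-\varepsilon g_n)$ and $f_n \leq g_n$ place $f_n$ inside $\mathcal{C}^\varepsilon_{1,0,1}$ for $\varepsilon \in (0,1/2)$, and a mild adjustment of the construction's parameters preserves the Maxwellian minoration $f_n(v) \geq \tfrac12(2\pi)^{-3/2}e^{-|v|^2}$. Then the upper half of~\eqref{eq:compareproductions} gives $\Deps(f_n) \leq \D_0(g_n)$, Proposition~\ref{prop:FDupper} gives $H_0[g_n|M^{g_n}] \leq C_0(\kappa_0)\, H_\varepsilon[f_n|\M_\varepsilon^{f_n}]$, and dividing drives the ratio to $0$.

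The main obstacle is the explicit bookkeeping of hydrodynamic quantities under the correspondence $f \leftrightarrow \tildg$: one must check that Villani's constants, once specialised to $\tildg$, really depend only on the quantities of $f$ listed in~\eqref{eq:Ksuperquadra} and in the second (general-kernel) part of the statement, and degrade by at most the announced $\kappa_0$-factor. In the counter-example, a further delicate point is to tune the Bobylev--Villani construction so that the $O(\varepsilon)$ deviations of $(\rho_{f_n},u_{f_n},T_{f_n})$ from $(1,0,1)$ land exactly inside the target window defining $\mathcal{C}^\varepsilon_{1,0,1}$.
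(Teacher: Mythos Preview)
Your proposal is correct and follows essentially the same route as the paper: the comparison~\eqref{eq:compareproductions} on the production side, Theorem~\ref{theorem:FD} (resp.\ Proposition~\ref{prop:FDupper}) on the entropy side, and explicit tracking of $(\rho,u,T,T_*)$ under $f\mapsto\varphi_\varepsilon(f)$. The paper carries out exactly your ``mild adjustment'' for the counter-example by choosing $\delta = 1-\tfrac{1+2\varepsilon}{2}$ in Villani's construction so that $g_l \geq \tfrac{1+2\varepsilon}{2}(2\pi)^{-3/2}e^{-|v|^2}$, and then the division by $1+\varepsilon g_l \leq 1+2\varepsilon$ yields the claimed minoration for $f_l^\varepsilon$; your sketch anticipates this correctly.
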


\noindent We recall that $\rho,u$ and $T$ are defined in~\eqref{eq:normalizationdef}. The last part of the corollary (the counter-example) is a weak version of~\cite[Theorem 1]{bobylev1999}, due to the presence of $\mathcal{C}^{\varepsilon}_{1,0,1}$ instead of the more expected $\mathcal{C}^{0}_{1,0,1}$ (the set of normalized functions) and the lack of a condition on the $\|\cdot\|_{L^1_s}$ and $\|\cdot\|_{H^k}$ norms. One could probably replace $\mathcal{C}^{\varepsilon}_{1,0,1}$ by $\mathcal{C}^0_{1,0,1}$ and add the $\| \cdot \|_{L^1_s}$ and $\| \cdot \|_{H^k}$ conditions with further technical work.

\subsection{About the Csiszár-Kullback-Pinsker inequality} \label{subsection:CKP}
\noindent The Csiszár-Kullback-Pinsker (CKP) inequality is a well-known link between the total variation distance of two probability measures and their relative entropy, which is often used to transform a convergence in relative entropy into a convergence in an actual norm, such as $L^1$ or $L^1_2$. In conducting our study, we were led to prove Proposition~\ref{theorem:CKnonopti} (or the optimal Proposition~\ref{theorem:CK}), which are generalizations of the CKP inequality.

\medskip

\noindent A generalization of the CKP inequality allowing for a weight in the total variation distance, called "weighted Csiszár-Kullback-Pinsker inequalities" was already provided by Bolley and Villani in~\cite[Theorem 2.1]{bolleyvillani}. However our inequality does differ from theirs. On the one hand, as far as drawbacks are concerned, we need the reference measure to be an equilibrium distribution while it is not the case in~\cite{bolleyvillani}, and our constant depends on $f$ while it doesn't in the inequality in~\cite{bolleyvillani}. On the other hand, for the improvements, our constant seems more precise and a great deal smaller (possibly the constant in Proposition~\ref{theorem:CK} is optimal), in the sense that choosing a weight equal to $|v|^{\alpha}$ with $\alpha > 2$ yields a finite constant in our case but an infinite one in~\cite[Theorem 2.1]{bolleyvillani}. In particular, remark that by Jensen's inequality, $\|M \, \varpi^2\|_{L^1} \leq \log \int e^{\varpi^2} \, M \, \dd v$.

\medskip

\noindent In another direction, in the study of the Vlasov-Poisson system, Cáceres et al. provided a generalization of the CKP inequality to $L^p$ norms with $1 \leq p \leq 2$ in~\cite[Proposition 3.1]{caceres2002nonlinear} (specifically in the note~\cite{notecarillo}) for various different entropies, which formulation and proof is quite close to our result. However, as far as the standard entropy is concerned, their result works only for $p=1$, is without weights, and their constant is not optimal. I am not aware of other such generalizations of the CKP inequality in the literature.

\medskip

\noindent Finally, the main idea of our proof of this inequality was already present in Jüngel's~\cite[proof of Theorem A.2]{jungel2016entropy}, and he also proposed a generalization in~\cite[Theorem A.3]{jungel2016entropy}, which differs from ours and holds for the $L^1$ norm.

\medskip

\noindent In the following Proposition~\ref{theorem:CKnonopti} (and in Proposition~\ref{theorem:CK} in Section~\ref{appendix:CK}), we propose an extension the CKP inequality to weighted $L^p$ distances for $p \in [1,2]$, as well as to other entropies than the Classical entropy (typically, the Fermi-Dirac and the Bose-Einstein entropies), which could be seen as a extension of the previously mentionned three generalizations. We denote this extension by $L^1$-$L^2$ weighted Csiszár-Kullback-Pinsker inequality.

\medskip

\noindent We moreover wish to highlight the fact that the proof of these inequalities, which is the same as in~\cite[Theorem A.2]{jungel2016entropy}, but with the addition of weights and of considering $L^p$, $1 \leq p \leq 2$, relies on a Hölder inequality. In particular, our proof of the original CKP inequality relies on a Taylor expansion and a Cauchy-Schwarz argument, instead of the tricky inequality (see for instance~\cite{gilardoni}) $\displaystyle x \log x - x + 1 \geq \frac32 \times \frac{(x-1)^2}{x+2}$.

\medskip

\noindent For the sake of readability, we present hereafter a simplified non-optimal version of the proposition, and refer the reader to Section~\ref{appendix:CK} for the refined and more general version of it, where it is also extended to the Bose-Einstein case and further discussed. We also extend it to a broader class of general entropies ($\Phi$-entropies in the text) in Proposition~\ref{theorem:generalCK} of Appendix~\ref{appendix:entropystudy}.

\begin{proposition} \label{theorem:CKnonopti} \textbf{$L^1$-$L^2$ weighted Csiszár-Kullback-Pinsker inequality related to the Classical and Fermi-Dirac relative entropies. \emph{[simplified]}}

\smallskip

\noindent Let $\varpi : \R^3 \to \R_+$ be measurable, and $r \in [1,2]$.
\medskip

$\bullet$ \textbf{Classical CKP inequality.} For any $0\leq f \in L^1_2(\R^3) \cap L \log L (\R^3) \setminus \{0\}$, assuming that the norms below are finite, it holds
\begin{equation} \label{eq:CKL1L2classicalnonopti}
\left\|(f-M^f) \, \varpi \right\|^2_{L^r} \leq 2 \, \max \left(\left\|M^f \, \varpi^2 \right\|_{L^{\frac{r}{2-r}}}, \, \left\|f\, \varpi^2 \right\|_{L^{\frac{r}{2-r}}} \right)  H_{0}[f|M^f],
\end{equation}
where $M^f$ is the Maxwellian associated to $f$.

\medskip

$\bullet$ \textbf{Fermi-Dirac CKP inequality.} For any $\varepsilon > 0$ and $0\leq f \in L^1_2(\R^3) \setminus \{0\}$ such that $1 - \varepsilon f \geq 0$ and $\gamma > \frac25$, assuming that the norms below are finite,
\begin{equation} \label{eq:CKL1L2ferminonopti}
\left\|(f-\Me) \, \varpi  \right\|^2_{L^r} \leq 2 \, \max \left(\left\|\Me \, \varpi^2  \right\|_{L^{\frac{r}{2-r}}}, \, \left\|f \, \varpi^2  \right\|_{L^{\frac{r}{2-r}}} \right)   H_{\varepsilon}[f|\Me],
\end{equation}
where $\Me$ is the $\varepsilon$-Fermi-Dirac distribution associated to $f$.
\end{proposition}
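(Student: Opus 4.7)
The plan is to combine the Taylor representation \eqref{eq:firstformulaHeps} of the relative entropy with a single application of Hölder's inequality at the level of the $\tau$-integrand. Setting $g_{\tau}:=(1-\tau)\Me+\tau f$ and recalling that $\Phi_{\varepsilon}''(x)=1/(x(1-\varepsilon x))$, Proposition~\ref{prop:firstformulaHeps} reads
\[
H_{\varepsilon}[f|\Me] \;=\; \int_0^1 (1-\tau) \int_{\R^3} \frac{(f-\Me)^{2}}{g_{\tau}(1-\varepsilon g_{\tau})}\,\dd v\,\dd\tau,
\]
so the entropy already controls a weighted $L^{2}$ quantity, and the task reduces to trading this internal weight for the external weight $\varpi$.

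The key step is the pointwise factorisation
\[
\bigl|(f-\Me)\varpi\bigr|^{r} \;=\; \Bigl(\frac{(f-\Me)^{2}}{g_{\tau}(1-\varepsilon g_{\tau})}\Bigr)^{\!r/2}\,\bigl(g_{\tau}(1-\varepsilon g_{\tau})\,\varpi^{2}\bigr)^{r/2},
\]
to which I apply Hölder's inequality with conjugate exponents $2/r$ and $2/(2-r)$ (well-defined because $r\in[1,2]$, the endpoint $r=1$ being Cauchy--Schwarz and $r=2$ being the trivial $(1,\infty)$ Hölder with an $L^{\infty}$ norm). Raising to the $2/r$-th power gives, for every $\tau\in(0,1)$,
\[
\bigl\|(f-\Me)\varpi\bigr\|_{L^{r}}^{2} \;\leq\; \Bigl(\int_{\R^{3}}\frac{(f-\Me)^{2}}{g_{\tau}(1-\varepsilon g_{\tau})}\,\dd v\Bigr)\,\bigl\|g_{\tau}(1-\varepsilon g_{\tau})\varpi^{2}\bigr\|_{L^{r/(2-r)}}.
\]

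I then eliminate the $\tau$-dependence on the right by using $1-\varepsilon g_{\tau}\leq 1$ to drop the middle factor, followed by convexity of $x\mapsto x^{r/(2-r)}$ (valid for $r\geq 1$) applied to $g_{\tau}\varpi^{2}=(1-\tau)\Me\varpi^{2}+\tau f\varpi^{2}$, which yields uniformly in $\tau$
\[
\|g_{\tau}\varpi^{2}\|_{L^{r/(2-r)}} \;\leq\; \max\bigl(\|\Me\varpi^{2}\|_{L^{r/(2-r)}},\,\|f\varpi^{2}\|_{L^{r/(2-r)}}\bigr).
\]
Since the left-hand side of the Hölder estimate does not depend on $\tau$, integrating both sides against $(1-\tau)\,\dd\tau$ on $[0,1]$ contributes the factor $\tfrac12$ on the left and reconstructs exactly $H_{\varepsilon}[f|\Me]$ on the right; multiplying through by $2$ then delivers \eqref{eq:CKL1L2ferminonopti}. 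The classical case \eqref{eq:CKL1L2classicalnonopti} is the specialisation $\varepsilon=0$, where $1-\varepsilon g_{\tau}\equiv 1$ and the same argument runs verbatim.

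The only delicate point I anticipate is pinning down the right factorisation: once one observes that the $L^{2}$ weight $\Phi_{\varepsilon}''(g_{\tau})$ built into the Taylor representation should be split as $\Phi_{\varepsilon}''(g_{\tau})^{r/2}\cdot\Phi_{\varepsilon}''(g_{\tau})^{-r/2}$ between the two Hölder factors, the standard unweighted CKP inequality at $r=1,\,\varpi\equiv 1$ and the $L^{2}$-endpoint at $r=2$ appear as the two natural instances of the same one-parameter family, and no further technical obstacle remains.
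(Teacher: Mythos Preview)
Your proof is correct and follows essentially the same approach as the paper: the Taylor representation \eqref{eq:firstformulaHeps} combined with H\"older's inequality (exponents $2/r$ and $2/(2-r)$) applied to the $\tau$-integrand. The only organisational difference is that the paper first derives the optimal version (Proposition~\ref{theorem:CK}) by using Minkowski's inequality on $\|g_{\tau}\varpi^{2}\|$ and then integrating in $\tau$ to produce the function $\Lambda$, and only afterwards simplifies via $b\,\Lambda(a/b)\le 2\max(a,b)$; you instead bound $\|g_{\tau}\varpi^{2}\|_{L^{r/(2-r)}}\le\max(\|\Me\varpi^{2}\|,\|f\varpi^{2}\|)$ uniformly in $\tau$ before integrating, which is a slight streamlining that reaches the simplified statement directly but forfeits the sharper constant.
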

\noindent   In the above proposition,  when $r=2$, $L^{\frac{r}{2-r}}$ should be understood as  $L^{\infty}$.

\subsection{Convergence towards equilibrium}
A result like Corollary~\ref{theorem:BFDcerci} constitutes the core of the proof of convergence of solutions to the Boltzmann-Fermi-Dirac equation towards equilibrium based on an entropy method. The proof is then complete provided that the solution satisfies a variety of bounds. The obtention of these bounds is an ongoing work that Bertrand Lods and I are currently conducting in the cutoff hard potentials case.

\begin{proposition} \label{prop:ifthmBFD}
    Let $\e > 0$ and $f^{in} \in L^1_2(\R^3) \setminus \{0\}$ such that $0 \leq f^{in} \leq \e^{-1}$. We consider a collision kernel $B$ satisfying the symmetry and micro-reversibility assumptions, thus having the form~\eqref{eqassump:B}, as well as, for some $B_0 >0$ and $\beta^+, \beta^- \geq 0$,
    $$
    B(|v-v_*|,\cos \theta) \geq B_0 \, \min \left( |v-v_*|^{\beta_+}, \, |v-v_*|^{-\beta_-} \right).
    $$
    We assume the existence of a solution $f \equiv f(t,v)$ to the spatially homogeneous Boltzmann-Fermi-Dirac equation~\eqref{eq:BFDequation} associated to the quantum parameter $\e$, the collision kernel $B$ and the initial distribution $f^{in}$. Assume that there exists an explicit time $t_0 > 0$ depending only on $\e$, $B$ and $f^{in}$ such that
    \begin{itemize}
        \item $\exists \, \kappa_0 \in (0,1)$ depending only on $\e$, $B$ and $f^{in}$ such that $1- \e f(t,v) \geq \kappa_0$ for any $(t,v) \in [t_0,+\infty) \times \R^3$,
        \item $\exists \, K_0,A_0 > 0$ depending only on $\e$, $B$ and $f^{in}$ such that $f(t,v) \geq K_0 e^{-A_0 |v|^2}$ for any $(t,v) \in [t_0,+\infty) \times \R^3$,
        \item $\forall \, k,l \in \N$, there exist $C_k > 0,\widetilde{C}_l > 0$ depending only on $\e$, $B$ and $f^{in}$ such that $\sup_{t \geq t_0} \|f(t,\cdot)\|_{L^1_k} \leq C_k$ and $\sup_{t \geq t_0} \|f(t,\cdot)\|_{H^l} \leq \widetilde{C}_l$.
    \end{itemize}
\noindent Then for any $p \geq 1$, $l \geq 0$ and $\alpha > 0$ there exists an explicit constant $\mathbf{C}_{p,l, \alpha} > 0$ depending only on $p,l$, $\e$, $B$ and $f^{in}$ such that for any $t \geq t_0$,
\begin{equation} \label{eqthmcvBFDafterCK}
    \left\|f(t,\cdot) - \M_{\e}^{f^{in}} \right\|_{L^p_l} \leq \mathbf{C}_{p,l, \alpha} \, (1+t)^{-1/\alpha}.
\end{equation}
\end{proposition}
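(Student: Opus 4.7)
The plan is to combine the super-linear Cercignani-type inequality of Corollary~\ref{theorem:BFDcerci} with the entropy dissipation identity~\eqref{eq:entropydissip} to obtain a polynomial decay of the relative entropy, then apply the Csiszár-Kullback-Pinsker inequality of Proposition~\ref{theorem:CKnonopti} to turn this into $L^1_l$ convergence, and finish with an elementary interpolation to cover the case $p > 1$.

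First, I observe that conservation of mass, momentum and energy~\eqref{eq:conservationmassmomentumenergy} yields $\M_\e^{f(t,\cdot)} = \M_\e^{f^{in}}$ for all $t \geq 0$, so that the equilibrium appearing in both~\eqref{eq:entropydissip} and Corollary~\ref{theorem:BFDcerci} is the same time-independent distribution. For fixed $\alpha > 0$, I apply the super-linear version of Corollary~\ref{theorem:BFDcerci} at each $t \geq t_0$: the lower bound $1 - \e f \geq \kappa_0$, the Maxwellian lower bound $f \geq K_0 e^{-A_0 |v|^2}$ (which gives $q_0 = 2$), the uniform control of the $\|f(t,\cdot)\|_{L^1_s}$ and $\|f(t,\cdot)\|_{H^k}$ norms, and the time-independence of $\rho, T, \e$ together ensure that the constant $K_\alpha(f(t,\cdot))$ is bounded below by some explicit $\widetilde{K}_\alpha > 0$ uniform in $t \geq t_0$. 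Setting $h(t) := H_\e[f(t,\cdot) | \M_\e^{f^{in}}]$, combining with~\eqref{eq:entropydissip} yields
\begin{equation*}
h'(t) \leq -\widetilde{K}_\alpha \, h(t)^{1+\alpha}, \qquad t \geq t_0,
\end{equation*}
which, integrated using the fact that $h$ is nonincreasing hence $h(t) \leq h(t_0) < +\infty$, gives $h(t) \lesssim (1+t)^{-1/\alpha}$ for $t \geq t_0$.

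Next, I plug this decay into the Fermi-Dirac CKP inequality~\eqref{eq:CKL1L2ferminonopti} with exponent $r = 1$ and weight $\varpi(v) = (1+|v|^2)^{l/2}$, noting that $\|f(t,\cdot) \, \varpi^2\|_{L^1} = \|f(t,\cdot)\|_{L^1_{2l}}$ is uniformly bounded by hypothesis, and $\|\M_\e^{f^{in}} \, \varpi^2\|_{L^1}$ is finite. This produces
\begin{equation*}
\left\|f(t,\cdot) - \M_\e^{f^{in}} \right\|_{L^1_l}^2 \lesssim h(t) \lesssim (1+t)^{-1/\alpha},
\end{equation*}
and taking square roots then relabeling $\alpha$ (still arbitrary) gives the desired estimate when $p = 1$. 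For $p > 1$, I use the elementary interpolation $\|g\|_{L^p_l}^p \leq \|g\|_\infty^{p-1} \|g\|_{L^1_{pl}}$ applied to $g = f(t,\cdot) - \M_\e^{f^{in}}$: the $L^\infty$-norm is uniformly bounded for $t \geq t_0$ via the Sobolev embedding $H^k \hookrightarrow L^\infty$ (for $k > 3/2$) applied to $f(t,\cdot)$, together with $\|\M_\e^{f^{in}}\|_\infty \leq \e^{-1}$. Combining with the previous step used with weight $pl$, and once more relabeling $\alpha$, yields the full estimate~\eqref{eqthmcvBFDafterCK}.

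The one non-routine point, and where I would invest the most care, is the uniformity of $\widetilde{K}_\alpha$ in Step~1: this requires tracking the explicit dependence of the constant $K_\alpha(f)$ from Corollary~\ref{theorem:BFDcerci} (itself inherited, via Theorem~\ref{theorem:FD}, from the adaptation of~\cite[Theorem~4.1]{villani2003}) on all the quantities $\kappa_0, K_0, A_0, \rho, T, \|f\|_{L^1_s}, \|f\|_{H^k}$, and verifying that the hypotheses listed in the proposition indeed suffice to control each of them uniformly in $t \geq t_0$. The remaining steps (ODE integration, CKP, Hölder-type interpolation) are mechanical once this has been done.
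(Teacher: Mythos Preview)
Your proof is correct and follows essentially the same approach as the paper's. One minor simplification: for the $L^\infty$ bound on $f(t,\cdot)$ needed in the final interpolation step, you need not invoke Sobolev embedding---the assumption $1-\e f(t,\cdot)\geq\kappa_0$ already gives $\|f(t,\cdot)\|_\infty\leq(1-\kappa_0)\e^{-1}$ directly.
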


\begin{proof}
The assumptions on the solution $f$ allow to apply Corollary~\ref{theorem:BFDcerci}, more specifically Equation~\eqref{eq:ineqentropygeneralkernel}, yielding the fact that for any $t \geq t_0$ and $\alpha > 0$, there is an explicit constant $K_{\alpha}(f(t,\cdot))$ depending only on $\alpha$, $B_0$, an upper bound of $\varepsilon$ and on $f(t,\cdot)$ only through $\rho$, $T$ (its density and temperature, defined by~\eqref{eq:normalizationdef}), and upper bounds on $A_0$, $1/K_0$, $1 / \kappa_0$, $\|f(t,\cdot)\|_{L^1_s}$ and $\|f(t,\cdot)\|_{H^k}$, where $s=s(\alpha,q_0,\beta_-,\beta_+)$ and $k=k(\alpha,s,\beta_-,\beta_+)$, such that
\begin{equation} \label{eqproofcvBFD}
    \Deps (f(t,\cdot))  \geq K_{\alpha}(f(t,\cdot)) \, H_{\varepsilon}[f(t,\cdot)|\M_{\e}^{f(t,\cdot)}]^{1+ \alpha}.
\end{equation}
From our assumptions, we observe that $K_{\alpha}(f(t,\cdot))$ can be upper-bounded by a constant $\hat{K}_{\alpha}$ depending only on $\e$, $B$ and $f^{in}$ (in particular not on time $t$). Moreover, since $f$ solves~\eqref{eq:BFDequation}, we have for all $t \geq 0$ that $\M_{\e}^{f(t,\cdot)} = \M_{\e}^{f^{in}}$ and, at least formally, that
$$
\frac{\dd }{\dd t} H_{\varepsilon}[f(t,\cdot)|\M_{\e}^{f^{in}}] = - \Deps (f(t,\cdot)).
$$
Then~\eqref{eqproofcvBFD} implies
$$
\frac{\dd }{\dd t} H_{\varepsilon}[f(t,\cdot)|\M_{\e}^{f^{in}}] \leq - \hat{K}_{\alpha} \, H_{\varepsilon}[f(t,\cdot)|\M_{\e}^{f^{in}}]^{1+ \alpha}.
$$
Integrating this equation between $t_0$ and $t > t_0$ yields
\begin{equation*}
H_{\varepsilon}[f(t,\cdot)|\M_{\e}^{f^{in}}] \leq \left( H_{\varepsilon}[f(t_0,\cdot)|\M_{\e}^{f^{in}}]^{\alpha} + \alpha \hat{K}_{\alpha} (t-t_0) \right)^{-1/\alpha}.
\end{equation*}
We then conclude that for any $t \geq t_0$, (even if $H_{\varepsilon}[f(t_0,\cdot)|\M_{\e}^{f^{in}}]=0$, in which case $H_{\varepsilon}[f(t,\cdot)|\M_{\e}^{f^{in}}]=0$)
\begin{equation} \label{entropyineqBFDcv}
H_{\varepsilon}[f(t,\cdot)|\M_{\e}^{f^{in}}]  \leq \hat{C}_{\alpha} \, (1 + t)^{-1/\alpha},
\end{equation}
where $\hat{C}_{\alpha} > 0$ is explicit and depends only on $\alpha$, $\e$, $B$ and $f^{in}$. Finally, we recall the CKP inequalities provided in Proposition~\ref{theorem:CKnonopti}, Equation~\eqref{eq:CKL1L2ferminonopti} in the Fermi-Dirac context, for weighted $L^1$ norms, implying that for any $k \geq 0$ we have for any $t \geq t_0$
$$
\left\|f(t,\cdot)-\M_{\e}^{f^{in}} \right\|^2_{L^1_k} \leq 2 \, \max \left(\left\|\M_{\e}^{f^{in}} \right\|_{L^1_{2k}}, \, \left\|f(t,\cdot) \right\|_{L^1_{2k}} \right)   H_{\varepsilon}[f(t,\cdot)|\M_{\e}^{f^{in}}].
$$
Since by assumption $\left\|f(t,\cdot) \right\|_{L^1_{2k}} \leq C_{2k}$, we conclude that there exists an explicit $C'_k$ depending only on $\e$, $B$ and $f^{in}$ such that
$$
\left\|f(t,\cdot)-\M_{\e}^{f^{in}} \right\|^2_{L^1_k} \leq C'_k \, H_{\varepsilon}[f(t,\cdot)|\M_{\e}^{f^{in}}] \leq C'_k \, \hat{C}_{\alpha} \, (1 + t)^{-1/\alpha}.
$$
Then~\eqref{eqthmcvBFDafterCK} for the $L^1_k$ norm is finally obtained with $\mathbf{C}_{1,k,\alpha} = \sqrt{C'_k \, \hat{C}_{\alpha/2}}$ and $L^p_l$ norms follow, as $\M_{\e}^{f^{in}} \in L^{\infty}(\R^3)$ and $f \in L^{\infty}(\R_+ \times \R^3)$.
\end{proof}

\section{Proof of Theorem~\ref{theorem:FD} and Proposition~\ref{prop:FDupper}} \label{section:prooftheoremFD}  
\noindent This section is devoted to the proof of Theorem~\ref{theorem:FD} and Proposition~\ref{prop:FDupper}. We make use of Proposition~\ref{prop:firstformulaHeps}, which is later proven in Appendix~\ref{appendix:entropystudy} - see Proposition \ref{prop:general}.

\subsection{Key element of the proof} The main argument in the proofs of Theorem~\ref{theorem:FD} and Proposition~\ref{prop:FDupper} lies in the following proposition.

\begin{proposition} \label{prop:rgnonincr}
    Let $0 \leq g \in L^1_2(\R^3) \cap L \log L(\R^3)  \setminus \{0\}$ and define, for $\varepsilon \geq 0$,
\begin{equation}  \label{eqdef:reps}
 R_g(\varepsilon) := H_{\varepsilon} \left[\varphi_{\varepsilon}^{-1}(g) \left|\M_{\varepsilon}^{\varphi_{\varepsilon}^{-1}(g)} \right. \right].
\end{equation}
Then $\varepsilon \mapsto R_g(\varepsilon)$ is $\mathcal{C}^1$ on $\R_+^*$, with, for any $\varepsilon > 0$, 
\begin{equation} \label{eq:Rgprime}
R'_g(\e) = \int_{\R^3} \int_{M_{\e}(v)}^{g(v)} \frac{\partial_{\e}(\vi_{\e}^{-1})(g(v)) - \partial_{\e}(\vi_{\e}^{-1})(y)}{y} \, \dd y \, \dd v,
\end{equation}
where we used the shorthand $M_{\e} = \vi_{\e}(\M_{\e}^{\vi_{\e}^{-1}(g)})$.
\end{proposition}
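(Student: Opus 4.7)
The plan is to put $R_g$ into a form whose $\varepsilon$-dependence is transparent, then differentiate by hand and show that the boundary term vanishes. Starting from~\eqref{eq:firstformulaHeps2} applied to $f := \varphi_\varepsilon^{-1}(g)$ (which lies in $L^1_2 \cap L \log L$ and satisfies $1-\varepsilon f > 0$ with $\gamma > \tfrac{2}{5}$, since $f = g/(1+\varepsilon g) < \varepsilon^{-1}$), and performing the change of variable $y = \varphi_\varepsilon(x)$ in the inner integral --- so that $\varphi_\varepsilon'(x)\,\dd x/\varphi_\varepsilon(x) = \dd y / y$ --- turns the representation into
\[
R_g(\varepsilon) = \int_{\R^3} \int_{M_\varepsilon(v)}^{g(v)} \frac{\varphi_\varepsilon^{-1}(g(v)) - \varphi_\varepsilon^{-1}(y)}{y}\, \dd y\, \dd v,
\]
with $M_\varepsilon := \varphi_\varepsilon \bigl( \M_{\varepsilon}^{\varphi_\varepsilon^{-1}(g)} \bigr)$. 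Now $g$ is fixed in $\varepsilon$ and the parameter enters only through the explicit formula $\varphi_\varepsilon^{-1}(y) = y/(1+\varepsilon y)$ and the lower bound $M_\varepsilon(v)$.

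Differentiating under both integrals yields two contributions. Differentiating the integrand reproduces exactly the right-hand side of~\eqref{eq:Rgprime}, while differentiating the lower bound produces the boundary term
\[
-\int_{\R^3} \partial_\varepsilon M_\varepsilon(v)\, \frac{\varphi_\varepsilon^{-1}(g(v)) - \varphi_\varepsilon^{-1}(M_\varepsilon(v))}{M_\varepsilon(v)}\, \dd v \;=\; -\int_{\R^3} \partial_\varepsilon \bigl(\log M_\varepsilon\bigr)(v)\, \bigl(f(v) - \M_\varepsilon^{\varphi_\varepsilon^{-1}(g)}(v)\bigr)\, \dd v,
\]
using $\varphi_\varepsilon^{-1}(g) = f$ and $\varphi_\varepsilon^{-1}(M_\varepsilon) = \M_\varepsilon^{\varphi_\varepsilon^{-1}(g)}$. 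The conceptual heart of the proof is that $\log M_\varepsilon(v) = a_\varepsilon + b_\varepsilon |v - u_\varepsilon|^2$ is a polynomial of degree at most $2$ in $v$, so $\partial_\varepsilon(\log M_\varepsilon)$ lies in the span of $\{1, v_1, v_2, v_3, |v|^2\}$; this kills the integral by the very definition of $\M_\varepsilon^{\varphi_\varepsilon^{-1}(g)}$, which shares the mass, momentum and energy of $f$. In this sense $\M_\varepsilon^{\varphi_\varepsilon^{-1}(g)}$ plays the role of a critical point that annihilates the boundary contribution.

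The remaining technical items are justifying differentiation under the integral and establishing $\mathcal{C}^1$ regularity of $\varepsilon \mapsto M_\varepsilon$. The former is routine using the uniform bound $|\partial_\varepsilon \varphi_\varepsilon^{-1}(y)/y| = y/(1+\varepsilon y)^2 \leq \min(y,\, 1/\varepsilon)$, combined with $g \in L^1(\R^3)$ and the Gaussian decay of $M_\varepsilon$ (with parameters varying continuously in $\varepsilon$), to produce domination on any compact subinterval of $\R_+^*$. The step I expect to be the main obstacle is the $\mathcal{C}^1$ regularity of $\varepsilon \mapsto (a_\varepsilon, b_\varepsilon, u_\varepsilon)$, which I would obtain by applying the implicit function theorem to the five scalar moment-matching equations
\[
\int_{\R^3} \varphi_\varepsilon^{-1}\bigl( e^{a + b|v-u|^2} \bigr) \begin{pmatrix} 1 \\ v \\ |v|^2 \end{pmatrix} \dd v \;=\; \int_{\R^3} \varphi_\varepsilon^{-1}(g)(v) \begin{pmatrix} 1 \\ v \\ |v|^2 \end{pmatrix} \dd v,
\]
viewed as a smooth function of $(a, u, b, \varepsilon) \in \R \times \R^3 \times (-\infty, 0) \times \R_+^*$. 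Non-degeneracy of the Jacobian in $(a, u, b)$ follows from strict convexity of $\Phi_\varepsilon$ on $[0, \varepsilon^{-1})$, so the unique solution depends smoothly on $\varepsilon$; continuity of the resulting formula for $R'_g(\varepsilon)$ in $\varepsilon$ then upgrades mere differentiability to $\mathcal{C}^1$ regularity on $\R_+^*$.
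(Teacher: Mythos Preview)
Your proposal is correct and follows essentially the same approach as the paper: rewrite $R_g(\varepsilon)$ via~\eqref{eq:firstformulaHeps2} and the change of variable $y=\varphi_\varepsilon(x)$, differentiate to produce an interior term plus a boundary term, and kill the boundary term by observing that $\partial_\varepsilon(\log M_\varepsilon)$ lies in $\mathrm{span}\{1,v,|v|^2\}$. The only notable difference is that the paper establishes the $\mathcal{C}^1$ regularity of $\varepsilon\mapsto(a_\varepsilon,b_\varepsilon,u_\varepsilon)$ by invoking Lu's explicit formulas~\cite{lu2001spatially} (Lemma~\ref{lemma:derivMeps}), whereas you propose the implicit function theorem; both routes work, and your domination sketch should explicitly call on the $L\log L$ hypothesis (the term $\partial_\varepsilon\varphi_\varepsilon^{-1}(g)\int_{M_\varepsilon}^{g}\tfrac{\dd y}{y}$ produces a $g\,|\log g|$ contribution that $g\in L^1$ alone does not control).
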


\noindent The striking point of~\eqref{eq:Rgprime} lies in the fact that it immediately proves that, for any $\e> 0$, the global monotonicity of $x \mapsto \partial_{\e} (\varphi^{-1}_{\e})(x)$ translates into the same monotonicity of $R_g$ at the point $\e$, as the integral in $y$ in~\eqref{eq:Rgprime} is automatically nonnegative if $x \mapsto \partial_{\e} (\varphi^{-1}_{\e})(x)$ is nondecreasing, or nonpositive if $x \mapsto \partial_{\e} (\varphi^{-1}_{\e})(x)$ is nonincreasing. Moreover, this proposition generalises to any family of entropies and any conserved quantities (provided a suitable differentiability property for $R_g$), as all key arguments rely only the generic framework we present in Appendix~\ref{appendix:entropystudy}, and the specificity of the Fermi-Dirac case is used only for the study of the differentiability of $R_g$.

\begin{proof}
    Using the relative entropy representation~\eqref{eq:firstformulaHeps2} in Proposition~\ref{prop:firstformulaHeps}, we have, for any $\e > 0$,
    \smallskip
    $$
    R_g(\e) = \int_{\R^3} \int_{\M_{\e}^{\vi_{\e}^{-1}(g)}(v)}^{\vi_{\e}^{-1}(g(v))} (\vi_{\e}^{-1}(g(v)) - x)\frac{\vi_{\e}'(x)}{\vi_{\e}(x)} \, \dd x \, \dd v.
    $$
    \smallskip
    Changing variables $y = \vi_{\e}(x)$, with then $\dd y = \vi_{\e}'(x) \, \dd x$ and $x = \varphi_{\e}^{-1}(y)$, we obtain
\smallskip
    $$
    R_g(\e) = \int_{\R^3} \int_{M_{\e}(v)}^{g(v)} \frac{\vi_{\e}^{-1}(g(v)) - \vi_{\e}^{-1}(y)}{y} \, \dd y \, \dd v.
    $$
where we denoted $M_{\e} = \vi_{\e}(\M_{\e}^{\vi_{\e}^{-1}(g)})$ for convenience of notations. We fix $v \in \R^3$ and set, for $(\e,z) \in (\R_+^*)^2$,
\begin{equation} \label{eqdef:K2var}
K(\e,z) := \int_{z}^{g(v)} \frac{\vi_{\e}^{-1}(g(v)) - \vi_{\e}^{-1}(y)}{y} \, \dd y.
\end{equation}
Note that if $g(v) = 0$ then $K(\e,z) = \e^{-1} \log (1 + \e z)$. The application $K$ is clearly $\mathcal{C}^1$ on $(\R_+^*)^2$, and straightforward computations can provide that differentiation in $\e$ under the integral in $y$ is valid. We find that
\begin{align*}
\partial_{\e} K(\e,z) &=  \int_{z}^{g(v)}  \frac{\partial_{\e}(\vi_{\e}^{-1}(g(v))) - \partial_{\e} (\vi_{\e}^{-1}(y))}{y}   \, \dd y,  \\
\partial_{z} K(\e,z) &= - \frac{\vi_{\e}^{-1}(g(v)) - \vi_{\e}^{-1}(z)}{z}.
\end{align*}
As a result of Lemma~\ref{lemma:derivMeps} in Appendix~\ref{appendix:technicallemmas}, the application $\e \mapsto M_{\e}(v)$ is $\mathcal{C}^1$ on $\R_+^*$ (as a composition of $\mathcal{C}^1$ applications), hence so is $\e \mapsto K(\e,M_{\e}(v))$, and we have, for any $\e > 0$ and $v \in \R^3$,
\begin{equation} \label{eq:derivativeKeps}
    \frac{\dd}{\dd \e}  K(\e,M_{\e}(v)) = \int_{M_{\e}(v)}^{g(v)}  \frac{\partial_{\e}(\vi_{\e}^{-1})(g(v)) - \partial_{\e}(\vi_{\e}^{-1})(y)}{y}   \, \dd y - \left(\partial_{\e} M_{\e}(v) \right) \, \frac{\vi_{\e}^{-1}(g(v)) -   \vi_{\e}^{-1}(M_{\e}(v))}{M_{\e}(v)}.
\end{equation}
Now note that
\begin{equation} \label{eq:rgkeps}
R_g(\e) = \int_{\R^3} K(\e,M_{\e}(v)) \, \dd v.
\end{equation}
Then $R_g$ is indeed $\mathcal{C}^1$ on $\R_+^*$, and differentiation under the integral is valid, if we can prove that for any $\e > 0$, there is a $\delta \in (0,\e)$ such that
\begin{equation} \label{eqproofsupK}
\int_{\R^3} \sup_{\e_* \in (\e-\delta,\e+\delta) }\left|  \frac{\dd}{\dd \e_*} \, K(\e_*,M_{\e_*}(v)) \right| \dd v < \infty.
\end{equation}
For the sake of clarity, we continue our computations assuming~\eqref{eqproofsupK} for now, and show~\eqref{eqproofsupK} after we have obtained our claimed result~\eqref{eq:Rgprime}.  

\medskip

\noindent Differentiating~\eqref{eq:rgkeps} in $\e$ under the integral in $v$, using~\eqref{eq:derivativeKeps}, yields for any $\e > 0$ (omitting to write the dependence in $v$ for clarity) 
\begin{equation} \label{eq:firstderivRproofTh1}
R'_g(\e) = \int_{\R^3} \int_{M_{\e}}^{g} \frac{\partial_{\e}(\vi_{\e}^{-1})(g) -\partial_{\e}(\vi_{\e}^{-1})(y)}{y} \, \dd y \, \dd v - \int_{\R^3} (\partial_{\e}M_{\e}) \times \frac{\vi_{\e}^{-1}(g) - \vi_{\e}^{-1}(M_{\e})}{M_{\e}} \, \dd v.
\end{equation}
Now remark that
\begin{equation} \label{eq:Mepslogderiveps}
\frac{\partial_{\e} M_{\e}}{M_{\e}} = \partial_{\e} \left(\log \circ \, \varphi_{\e} \left(\M_{\e}^{\varphi_{\e}^{-1}(g)} \right) \right).
\end{equation}
Since $\M_{\e}^{\varphi_{\e}^{-1}(g)}$ is a Fermi-Dirac statistics, we have, denoting its coefficients by $a_{\e}$,  $u_{\e}$ and $b_{\e}$ (as in~\eqref{eqdef:FermiDiracStat}), that for any $v \in \R^3$,
\begin{equation} \label{eq:logMepslinearcomb}
  \log \circ \, \varphi_{\e} \left(\M_{\e}^{\varphi_{\e}^{-1}(g)} \right) (v)  = \alpha_{\e} + \beta_{\e} \cdot v + b_{\e} \, |v|^2,
\end{equation}
where $\alpha_{\e} =  a_{\e} + b_{\e} |u_{\e}|^2 $ and $\beta_{\e} = - 2b_{\e} \, u_{\e}$. Lemma~\ref{lemma:derivMeps} in Appendix~\ref{appendix:technicallemmas} ensures that the application $\varepsilon \mapsto (a_{\e}, u_{\e}, b_{\e})$ is $\mathcal{C}^1$ on $\R_+^*$, hence so is the above expression pointwise in $v$, and we conclude that $\displaystyle \frac{\partial_{\e} M_{\e}}{M_{\e}}$ is a linear combination of the functions $v \mapsto 1$, $v \mapsto v$ and $v \mapsto |v|^2$. By construction, $\vi_{\e}^{-1}(g)$ and $ \vi_{\e}^{-1}(M_{\e}) = \M_{\e}^{\varphi_{\e}^{-1}(g)}$ share the same normalization in $v \mapsto 1$, $v \mapsto v$ and $v \mapsto |v|^2$, hence
$$
\int_{\R^3} \left(\vi_{\e}^{-1}(g) - \vi_{\e}^{-1}(M_{\e}) \right) \frac{\partial_{\e}M_{\e}}{M_{\e}} \, \dd v = 0,
$$
and~\eqref{eq:firstderivRproofTh1} becomes the announced~\eqref{eq:Rgprime}. 

\bigskip

\noindent We now show~\eqref{eqproofsupK}, ensuring the validity of the differentiation in $\e$ under the integral in Equation~\eqref{eq:rgkeps}, by dominated convergence. The previous computations imply that for any $\e > 0$,
\begin{align*}
\left| \frac{\dd}{\dd \e} \, K(\e,M_{\e}(v)) \right| &\leq \left|\int_{M_{\e}}^{g} \frac{\partial_{\e}(\vi_{\e}^{-1})(g) -\partial_{\e}(\vi_{\e}^{-1})(y)}{y} \, \dd y \right| + \left|  \frac{\partial_{\e}M_{\e} }{M_{\e}} (\vi_{\e}^{-1}(g) - \vi_{\e}^{-1}(M_{\e}))\right|.
\end{align*}
On the one hand, using~\eqref{eq:Mepslogderiveps}--\eqref{eq:logMepslinearcomb}, we get, for any $v \in \R^3$,
\begin{align*}
\left|  \frac{\partial_{\e}M_{\e} }{M_{\e}} (\vi_{\e}^{-1}(g) - \vi_{\e}^{-1}(M_{\e}))\right|  &\leq \left( |\partial_{\e} \alpha_{\e}| + |\partial_{\e} \beta_{\e}| + |\partial_{\e} b_{\e}| \right) (1 + |v|^2) \, |\vi_{\e}^{-1}(g(v)) - \vi_{\e}^{-1}(M_{\e}(v))| \\
&\leq \left( |\partial_{\e} \alpha_{\e}| + |\partial_{\e} \beta_{\e}| + |\partial_{\e} b_{\e}| \right) (1 + |v|^2) \left\{g(v) + M_{\e}(v) \right\},
\end{align*}
where for the last inequality, we used the inequalities $|x-y| \leq x+y$ and $\varphi_{\e}^{-1}(x) \leq x$ for $x,y \geq 0$. 

\medskip

\noindent On the other hand, remarking that
\begin{equation} \label{eq:derivphieps}
 \forall \, x \geq 0, \qquad \partial_{\e}(\vi_{\e}^{-1})(x) = \partial_{\e} \left(\frac{x}{1 + \e x} \right) = - \left(\frac{x}{1 + \e x} \right)^2 = - \vi_{\e}^{-1}(x)^2,
\end{equation}
we obtain, as $x \mapsto \varphi_{\e}^{-1}(x)^2$ is increasing,
\begin{align*}
\left|\int_{M_{\e}}^{g} \frac{\partial_{\e}(\vi_{\e}^{-1})(g) -\partial_{\e}(\vi_{\e}^{-1})(y)}{y} \, \dd y \right| =  \int_{M_{\e}}^{g} \frac{\vi_{\e}^{-1}(g)^2 -\vi_{\e}^{-1}(y)^2}{y} \, \dd y.
\end{align*}
In the last equation, as well as in the four following ones, we omit writing the dependence of $g$ and $M_{\e}$ in $v$ for clarity. From the identity $|x^2 - y^2| = (x + y) |x-y|$, for $x,y\geq 0$, the fact that $\|\varphi_{\e}^{-1}\|_{\infty} = \e^{-1}$ and that $\varphi_{\e}^{-1}$ is increasing, we get
$$
\int_{M_{\e}}^{g} \frac{\vi_{\e}^{-1}(g)^2 -\vi_{\e}^{-1}(y)^2}{y} \, \dd y  \leq 2 \e^{-1}  \int_{M_{\e}}^{g} \frac{\vi_{\e}^{-1}(g) -\vi_{\e}^{-1}(y)}{y} \, \dd y,
$$
which, using the bound $|\vi_{\e}^{-1}(x) - \vi_{\e}^{-1}(y)| \leq |x-y|$ for $x,y \geq 0$ and $\e > 0$, is less than
$$
2 \e^{-1}  \int_{M_{\e}}^{g} \frac{g -y}{y} \, \dd y.
$$
By direct integration, the latter equals
$$
2 \e^{-1} \left(g \log g - g \log M_{\e} - g + M_{\e}\right),
$$
which, recalling that $M_{\e}(v) = \exp \left(a_{\e} + |v-u_{\e}|^2 \right)$ and $g \geq 0$, is itself less than
$$
2 \e^{-1} \left(g \log g - a_{\e} \, g  + M_{\e}\right).
$$
All in all, as $-a_{\e}g \leq |a_{\e}| g$, we have for any $\e > 0$ and $v \in \R^3$ that
\begin{multline} \label{eqproofprop6}
\left| \frac{\dd}{\dd \e}  K(\e,M_{\e}(v)) \right| \leq \\ 2 \e^{-1} \left\{g \log g(v) + |a_{\e}| g(v)  + M_{\e}(v)\right\} + \left( |\partial_{\e} \alpha_{\e}| + |\partial_{\e} \beta_{\e}| + |\partial_{\e} b_{\e}| \right) (1 + |v|^2) \left\{g(v) + M_{\e}(v) \right\}.
\end{multline}
Let us now fix $\e > 0$ and an arbitrary $\delta \in (0,\e)$. Since the application $\varepsilon \mapsto (\alpha_{\e}, \beta_{\e}, b_{\e})$ is $\mathcal{C}^1$ on $\R_+^*$, there exists $c_0 > 0$ such that
$$
\sup_{\e_* \in (\e - \delta, \e + \delta)}  \left( |\partial_{\e_*} \alpha_{\e_*}| + |\partial_{\e_*} \beta_{\e_*}| + |\partial_{\e_*} b_{\e_*}| \right) \leq c_0.
$$
Moreover, as $\e \mapsto a_{\e}$ is continuous on $\R_+$, there is a $c_1 > 0$ such that $$\sup_{\e_* \in (\e - \delta, \e + \delta)} |a_{\e_*}| \leq c_1.$$ Finally, from Lemma~\ref{lem:bdMeps} in Appendix~\ref{appendix:technicallemmas} (with $\overline{\e} = \e + \delta$ in the lemma), there exist $c_2>0$ and $\eta > 0$ such that for any $v \in \R^3$,
$$
\sup_{\e_* \in [0, \e + \delta]} M_{\e_*}(v) \leq c_2 \, e^{-\eta |v|^2}.
$$
Then~\eqref{eqproofprop6} implies that for any $v \in \R^3$,
\begin{multline*}
   \sup_{\e_* \in (\e-\delta,\e+\delta) }\left|\frac{\dd}{\dd \e_*} \, K(\e_*,M_{\e_*}(v)) \right|  \leq \\ 
   2 (\e-\delta)^{-1} \left(g \log g(v) + c_1 g(v)  + c_2 \, e^{-\eta |v|^2}\right) + c_0 (1 + |v|^2) \left\{g(v) + c_2 \, e^{-\eta |v|^2} \right\}.  
\end{multline*}
Since $g \in L^1_2(\R^3) \cap L \log L(\R^3)$, the right-hand side of the above equation is integrable, and we conclude, by dominated convergence, to the differentiability of $R_g$ and the validity of the differentiation in $\e$ under the integral, ending the proof.
\end{proof}

\subsection{Continuity at zero} The second and last argument required in the proofs of Theorem~\ref{theorem:FD} and Proposition~\ref{prop:FDupper} is the continuity of $\e \mapsto R_g(\e)$ at the point $\e = 0$, which is given in the following lemma.

\begin{lemma} \label{lemma:continuityatzero}
    Let $0 \leq g \in L^1_2(\R^3) \cap L \log L(\R^3)  \setminus \{0\}$. Then the function $\e \in \R_+ \mapsto R_g(\e)$ defined in~\eqref{eqdef:reps} is continuous at the point $\e = 0$.
\end{lemma}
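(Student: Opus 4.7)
The plan is to use the integral representation from Proposition~\ref{prop:firstformulaHeps} and apply Lebesgue's dominated convergence theorem in the $v$-variable as $\e \to 0^+$. Setting $M_\e := \vi_\e(\M_\e^{\vi_\e^{-1}(g)})$ (a Maxwellian, with the convention $M_0 = M^g$), the change of variable $y = \vi_\e(x)$ applied to~\eqref{eq:firstformulaHeps2} gives
\[
R_g(\e) = \int_{\R^3} K_\e(v) \, \dd v, \qquad K_\e(v) := \int_{M_\e(v)}^{g(v)} \frac{\vi_\e^{-1}(g(v)) - \vi_\e^{-1}(y)}{y} \, \dd y,
\]
and similarly for $\e = 0$ (using $\vi_0^{-1} = \mathrm{Id}$). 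The hypotheses of Proposition~\ref{prop:firstformulaHeps} are satisfied for each sufficiently small $\e \geq 0$: the constraint $1 - \e\vi_\e^{-1}(g) \geq 0$ is automatic, the condition $g \in L\log L$ handles $\e = 0$, and for $\e > 0$ small the ratio $\gamma_\e$ exceeds $2/5$ because $T_F(\rho_\e, \e) \to 0$ while the temperature of $\vi_\e^{-1}(g)$ stays bounded away from $0$. It is thus sufficient to establish (i) pointwise convergence $K_\e(v) \to K_0(v)$ almost everywhere, and (ii) a uniform integrable upper bound on $|K_\e(v)|$.

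For (i), the key ingredient is that $\vi_\e^{-1}(x) = x/(1+\e x) \to x$ uniformly on bounded sets as $\e \to 0$. Moreover, dominated convergence (using $0 \leq \vi_\e^{-1}(g) \leq g \in L^1_2(\R^3)$) implies that the hydrodynamic moments of $\vi_\e^{-1}(g)$ converge to those of $g$. Combining this with the continuity of the Fermi-Dirac parameter map at $\e = 0$ (ensured by Lemma~\ref{lemma:derivMeps} of Appendix~\ref{appendix:technicallemmas}), the parameters $(\alpha_\e, \beta_\e, b_\e)$ of $\M_\e^{\vi_\e^{-1}(g)}$ converge to the Maxwellian parameters $(\alpha_0, \beta_0, b_0)$ of $M^g$, so $M_\e(v) \to M^g(v)$ pointwise. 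The convergence $K_\e(v) \to K_0(v)$ then follows from the uniform convergence of the integrand combined with the convergence of the endpoints of integration.

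For (ii), since $\vi_\e^{-1}$ is $1$-Lipschitz (its derivative $(1+\e x)^{-2}$ being bounded by $1$), one obtains
\[
|K_\e(v)| \leq \int_{\min(M_\e, g)}^{\max(M_\e, g)} \frac{|g(v) - y|}{y} \, \dd y = g(v) \log \frac{g(v)}{M_\e(v)} - g(v) + M_\e(v).
\]
Lemma~\ref{lem:bdMeps} of Appendix~\ref{appendix:technicallemmas} provides $c_2, \eta > 0$ such that $M_\e(v) \leq c_2 e^{-\eta|v|^2}$ uniformly for $\e \in [0, \overline{\e}]$, while continuity of $(\alpha_\e, \beta_\e, b_\e)$ at $\e = 0$ yields a constant $C > 0$ with $|\log M_\e(v)| \leq C(1+|v|^2)$ uniformly in the same range. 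These estimates produce the $\e$-uniform bound
\[
|K_\e(v)| \leq (g \log g)_+(v) + C\, g(v)(1+|v|^2) + c_2 e^{-\eta|v|^2},
\]
which belongs to $L^1(\R^3)$ since $g \in L^1_2 \cap L\log L$, and dominated convergence concludes.

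The main obstacle is justifying the continuity at $\e = 0$ of the Fermi-Dirac parameter map together with the Maxwellian-type domination of $M_\e$ down to $\e = 0$; these are the technical facts delivered by Lemmas~\ref{lemma:derivMeps} and~\ref{lem:bdMeps}. Once these are in hand, the rest of the argument is essentially mechanical.
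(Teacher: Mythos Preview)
Your argument is correct and uses the same technical ingredients as the paper (the Maxwellian domination of $M_\e$ from Lemma~\ref{lem:bdMeps} and the continuity of the Fermi--Dirac parameters at $\e=0$), but organizes the computation differently. The paper splits $R_g(\e) = H_\e(\vi_\e^{-1}(g)) - H_\e(\M_\e^{\vi_\e^{-1}(g)})$ and shows each term separately converges as $\e\to 0$ by dominated convergence on the explicit entropy integrands. You instead work with the single integrand $K_\e(v)$ coming from the representation~\eqref{eq:firstformulaHeps2}, which is precisely the quantity $K(\e,M_\e(v))$ already introduced in the proof of Proposition~\ref{prop:rgnonincr}; this is arguably more economical since that machinery is already set up, and your domination bound $|K_\e|\le g\log(g/M_\e)-g+M_\e$ is exactly the one the paper uses in~\eqref{eqproofprop6} (up to the factor $2\e^{-1}$ there, which comes from the derivative rather than the function itself).

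One small correction: the continuity of the parameter map $\e\mapsto(a_\e,b_\e,\bar u_\e)$ at $\e=0$ is Lemma~\ref{lemma:continuity}, not Lemma~\ref{lemma:derivMeps} (the latter only gives $\mathcal{C}^1$ regularity on $\R_+^*$). Since Lemma~\ref{lem:bdMeps} already packages both the Gaussian domination and the bound $|\log M_\e(v)|\le C(1+|v|^2)$ uniformly on $[0,\overline{\e}]$, you can in fact invoke that lemma alone for both estimates in step~(ii).
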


\begin{proof}
We write $R_g(\e)$ as
\begin{equation}
    R_g(\e) = H_{\e} \left( \frac{g}{1 + \e g} \, \right) - H_{\e} \left( \M_{\e}^{\frac{g}{1 + \e g}} \, \right).
\end{equation}
We show that both terms, in the right-hand-side, are continuous at the point $\e = 0$. Firstly, by definition of the Fermi-Dirac entropy, it holds for any $\e > 0$ that
\begin{align*}
H_{\e} \left( \frac{g}{1 + \e g} \, \right) &= \int_{\R^3} \left\{\frac{g}{1 + \e g} \log \left( \frac{g}{1 + \e g} \right) + \e^{-1} \left(1 - \e \frac{g}{1 + \e g}\right) \log \left(1 - \e \frac{g}{1 + \e g}\right) \right\} \dd v \\
&= \int_{\R^3} \left\{\frac{g}{1 + \e g} \log g - \e^{-1} \log(1 + \e g) \right\} \dd v.
\end{align*}
As $g\geq 0$, we have for any $\e >0$ that
\begin{align*}
\left| \frac{g}{1 + \e g} \log g \right| \leq  g |\log g| \qquad \text{and} \qquad \left|-\e^{-1} \log(1 + \e g) \right|  \leq g,
\end{align*}
Since by hypothesis $0 \leq g \in L^1_2(\R^3) \cap L \log L(\R^3)$, the function $g |\log g| + g$ is integrable, and, by dominated convergence, we then conclude to the limit
$$
H_{\e} \left( \frac{g}{1 + \e g} \, \right) \xrightarrow{\e \to 0}  \int_{\R^3} \left\{g \log g - g \right\} \dd v = H_0(g).
$$
Secondly, going through the same computations as for the first term, we have, for any $\e > 0$,
\begin{align*}
 H_{\e} \left( \M_{\e}^{\frac{g}{1 + \e g}}  \right) = \int_{\R^3} \left\{\frac{M_{\e}}{1 + \e M_{\e}} \log M_{\e} - \e^{-1} \log(1 + \e M_{\e} ) \right\} \dd v,
\end{align*}
where we denoted $M_{\e} = \varphi_{\e} \left(\M_{\e}^{\frac{g}{1 + \e g}}  \right)$. Lemma~\ref{lem:bdMeps} in Appendix~\ref{appendix:technicallemmas} (with $\overline{\e} = 1$ in the lemma) provides the existence of $C, \eta > 0$  such that for all $0\leq \e \leq 1$,
$$
|\log M_{\e}(v)| \leq C (1 + |v|^2) \qquad \text{and} \qquad M_{\e} (v) \leq C \, e^{-\eta |v|^2}.
$$
Hence, for any $0 \leq \e \leq 1$ and $v \in \R^3$,
\begin{multline*}
\left|\frac{M_{\e}(v)}{1 + \e M_{\e}(v)} \log M_{\e}(v) \right| \leq   C^2 \, e^{- \eta |v|^2} (1 + |v|^2), \quad \text{and} \qquad \left| \e^{-1} \log(1 + \e M_{\e}(v) ) \right|  \leq  M_{\e}(v)  \leq C \, e^{-\eta |v|^2}.
\end{multline*}
By dominated convergence, it thus comes that
$$
\lim_{\e \to 0} H_{\e} \left( \M_{\e}^{\frac{g}{1 + \e g}} \, \right) = \int_{\R^3} \lim_{\e \to 0} \left( \frac{M_{\e}}{1 + \e M_{\e}} \log M_{\e} -   \e^{-1} \log(1 + \e M_{\e} )\right) \dd v.
$$
As Lemma~\ref{lemma:continuity} ensures that $\displaystyle \lim_{\e \to 0} M_{\e}(v) = \M^g_0 (v)$ for all $v \in \R^3$, the above limit becomes
$$
\int_{\R^3} \M^g_0 \log \M^g_0 - \M^g_0 \, \dd v,
$$
that is $H_0 (\M^g_0)$, ending the proof.
\end{proof}

\medskip

\noindent We can finally conclude to the announced inequalities of Theorem~\ref{theorem:FD} and Proposition~\ref{prop:FDupper}.

\subsection{Final proof of Theorem~\ref{theorem:FD}} Let $\e > 0$ and $0 \leq f \in L^1_2(\R^3) \setminus \{0\}$ such that
$$
g := \frac{f}{1 - \e f}\in  L^1_2(\R^3) \cap L \log L(\R^3) \setminus \{0\}.
$$
Proposition~\ref{prop:rgnonincr} ensures that the application $R_g$ defined by~\eqref{eqdef:reps} is $\mathcal{C}^1$ on $\R_+^*$, and combining~\eqref{eq:Rgprime} with~\eqref{eq:derivphieps} yields, for any $\e_* > 0$,
$$
R'_g(\e_*) = -\int_{\R^3} \int_{M_{\e_*}}^{g} \frac{\vi_{\e_*}^{-1}(g)^2 - \vi_{\e_*}^{-1}(y)^2}{y} \, \dd y \, \dd v.
$$
As, for any $\e_*>0$, the application $x \mapsto \vi_{\e_*}^{-1}(x)^2$ is increasing, the integral in $y$ is 
nonnegative, hence so is the integral in $y$ and $v$, therefore
$$
\forall \, \e_* > 0, \qquad  \qquad R_g'(\e_*) \leq 0. \qquad \qquad  \phantom{\forall \, \e_* > 0,}
$$
Combining this result with Lemma~\ref{lemma:continuityatzero}, which states that $\e_* \mapsto R_g(\e_*)$ is continuous at the point $\e_* = 0$, $R_g$ is then nonincreasing on $\R_+$, which implies in particular that
$$
R_g(0) \geq R_g(\e),
$$
that is
$$
H_{0} \left[g \left|M^{g} \right. \right] \geq H_{\varepsilon} \left[\varphi_{\varepsilon}^{-1}(g) \left|\M_{\varepsilon}^{\varphi_{\varepsilon}^{-1}(g)} \right. \right],
$$
or the announced inequality~\eqref{eq:theorementropiesFD2},
$$
H_{0} \left[\frac{f}{1- \e f} \left|M^{\frac{f}{1- \e f}} \right. \right] \geq H_{\varepsilon} \left[f\left|\M_{\varepsilon}^{f} \right. \right].
$$

\subsection{Final proof of Proposition~\ref{prop:FDupper}} Let us fix $\e > 0$, $\kappa_0 \in (0,1)$ and $0 \leq f \in L^1_2(\R^3) \setminus \{0\}$ such that $1 - \e f \geq \kappa_0$. We let
$$
g := \frac{f}{1 - \e f}.
$$
Then $0 \leq g  \in  L^1_2(\R^3) \cap L^{\infty}(\R^3) \setminus \{0\}$, which is in particular included in $L \log L (\R^3)$. We apply the results of Proposition~\ref{prop:rgnonincr} and Lemma~\ref{lemma:continuityatzero} with the function $g$, ensuring that the application $R_g$, defined by~\eqref{eqdef:reps}, is continuous on $\R_+$ and $\mathcal{C}^1$ on $\R_+^*$ with, combining~\eqref{eq:Rgprime} with~\eqref{eq:derivphieps},
$$
\forall \, \e_* > 0, \qquad R'_g(\e_*) = -\int_{\R^3} \int_{M_{\e_*}}^{g} \frac{\vi_{\e_*}^{-1}(g)^2 - \vi_{\e_*}^{-1}(y)^2}{y} \, \dd y \, \dd v.
$$
For $\e_*>0$, $v \in \R^3$ and any $y$ between $M_{\e_*}(v)$ and $g(v)$, one has (omitting the dependencies in $v$ for clarity)

\smallskip

$$
|\vi_{\e_*}^{-1}(g)^2 - \vi_{\e_*}^{-1}(y)^2| =  (\vi_{\e_*}^{-1}(g)+ \vi_{\e_*}^{-1}(y)) |\vi_{\e_*}^{-1}(g) - \vi_{\e_*}^{-1}(y)| \leq c_{g}(\varepsilon_*) \,  |\vi_{\e_*}^{-1}(g) - \vi_{\e_*}^{-1}(y)|,
$$
where we denoted
\begin{equation*}
c_{g}(\varepsilon_*) := \|\vi_{\e_*}^{-1}(g)\|_{\infty} + \max \left(\|\vi_{\e_*}^{-1}(g)\|_{\infty}, \|\M_{\e_*}^{\vi_{\e_*}^{-1}(g)}\|_{\infty} \right).
\end{equation*}

\smallskip

\noindent As both $z \mapsto \vi_{\e_*}^{-1}(z)^2$ and $z \mapsto \vi_{\e_*}^{-1}(z)$ are increasing, we thus have
\begin{equation*} 
     R'_g(\e_*) \geq - c_{g}(\varepsilon_*) \int_{\R^3} \int_{M_{\e_*}}^{g} \frac{\vi_{\e_*}^{-1}(g) - \vi_{\e_*}^{-1}(y)}{y} \, \dd y \, \dd v =  - c_{g}(\varepsilon_*) \, R_g(\e_*).
\end{equation*}
Noticing that
\begin{equation*} 
c_g(\e_*) \leq \|g\|_{\infty} + \max \left(\|g\|_{\infty} + \|\M_{\e_*}^{\varphi_{\e_*}^{-1}(g)} \|_{\infty} \right),
\end{equation*}
we conclude to the relationship, for any $\e_* > 0$,
\begin{equation}\label{eq:relatRprimeR}
 R'_g(\e_*) \geq - \left( \|g\|_{\infty} + \max \left(\|g\|_{\infty} + \|\M_{\e_*}^{\varphi_{\e_*}^{-1}(g)} \|_{\infty} \right) \right) \, R_g(\e_*).
\end{equation}
By hypothesis, it holds that $1- \e f \geq \kappa_0$, hence \begin{equation}  \label{eq:ginftykappa0} \|g\|_{\infty} \leq \e^{-1} \, (\kappa_0^{-1} - 1), \end{equation} and we have for any $\e_* > 0 $ that
\begin{equation} \label{eqproofProp2_1}
\varepsilon_* \varphi_{\e_*}^{-1}(g) = \frac{\e_* g}{1 + \e_* g} \leq \left(1 + \frac{1}{\e_* \|g\|_{\infty}} \right)^{-1} \leq \left(1 + \frac{\e }{\e_*} \, (\kappa_0^{-1} - 1)^{-1} \right)^{-1}.
\end{equation}
Let us use the notation
$$
\gamma_{\e_*} = \frac{T_{*}}{T_F(\rho_{*}, \e_*)},
$$
where $\rho_{*}$ and $T_*$ are respectively the density and temperature associated to the distribution $\varphi_{\e_*}^{-1}(g)$ defined as in~\eqref{eq:normalizationdef}, and $\displaystyle T_F(\rho_{*}, \e_*) = \frac12 \left( \frac{3 \rho_{*} \e_*}{4 \pi}\right)^{2/3}$ is the corresponding $\e_*$-Fermi temperature. 

\noindent Using Equation~\eqref{eq:propkappa11} in Proposition~\ref{prop:kappa1} in Appendix~\ref{appendix:technicallemmas} and the fact that $\M_{\e_*}^{\varphi_{\e_*}^{-1}(g)} \leq \varphi_{\e_*}\left( \M_{\e_*}^{\varphi_{\e_*}^{-1}(g)}\right)$, we have, whenever $\gamma_{\e_*} \geq \gamma^{\dag} = \left(\frac{4}{\pi} \right)^{\frac13} \left(\frac{5}{3} \right)^{\frac53}$,
\begin{equation}
\e_* \| \M_{\e_*}^{\varphi_{\e_*}^{-1}(g)}  \|_{\infty} \leq \frac23 \left(\frac{\gamma_{\e_*}}{\gamma^{\dag}} \right)^{-3/2}.\label{eqproofProp2_2}
\end{equation}
We now notice that~\cite[Proposition 4]{lu2001spatially} actually implies
$$
\gamma_{\e_*} \geq \frac25 \left(\e_* \|\vi_{\e_*}^{-1}(g)\|_{\infty} \right)^{-2/3},
$$
which, combined with~\eqref{eqproofProp2_1}, yields
\begin{equation} \label{eq:gammaestar}
\gamma_{\e_*} \geq \frac25 \left(1 + \frac{\e }{\e_*} \, (\kappa_0^{-1} - 1)^{-1} \right)^{2/3}.
\end{equation}

\medskip

\noindent We define
\begin{equation} \label{eqdefepsdag}
\e^{\dag} := \left(\left(\frac52 \, \gamma^{\dag} \right)^{\frac32} - 1 \right)^{-1} \, (\kappa_0^{-1} - 1)^{-1} \, \e.
\end{equation}
Then, for any $\e_* \in (0, \e^{\dag} \,]$, combining~\eqref{eq:gammaestar} with~\eqref{eqdefepsdag} yields
$$
\gamma_{\e_*} \geq  \frac25 \left(1 + \frac{\e }{\e^{\dag}} \, (\kappa_0^{-1} - 1)^{-1} \right)^{2/3} =\gamma^{\dag}.
$$
Applying~\eqref{eqproofProp2_2}, along with~\eqref{eq:gammaestar} and the fact that
$$
\frac23 \left(\frac52 \gamma^{\dag} \right)^{\frac32} = \frac23 \left(\frac52\right)^{\frac32}\left(\frac{4}{\pi} \right)^{\frac12} \left(\frac{5}{3} \right)^{\frac52} \leq 11,
$$
gives, for any $\e_* \in (0, \e^{\dag} \,]$,
$$
\|\M_{\e_*}^{\varphi_{\e_*}^{-1}(g)} \|_{\infty} \leq \frac{11}{\e_* + \e  (\kappa_0^{-1} - 1)^{-1}} \leq 11 \, \e^{-1} \, (\kappa_0^{-1}-1).
$$
Therefore, recalling~\eqref{eq:ginftykappa0}, we have for any $\e_* \in (0, \, \e^{\dag} \, ]$ that

$$
\|g\|_{\infty} + \max \left(\|g\|_{\infty},  \|\M_{\e_*}^{\varphi_{\e_*}^{-1}(g)} \|_{\infty}\right) \leq \e^{-1} \, (\kappa_0^{-1}-1) + 11 \e^{-1} \, (\kappa_0^{-1}-1) =  12 \, \e^{-1} \, (\kappa_0^{-1}-1),
$$
implying, with the relationship~\eqref{eq:relatRprimeR}, that for any $\e_* \in (0, \, \e^{\dag}\,]$,

\begin{equation} \label{eq:intermediateproofupperbound}
R_g'(\e_*) \geq -12 \, \e^{-1} \, (\kappa_0^{-1}-1) \, R_g(\e_*).
\end{equation}
Applying a Grönwall argument over~\eqref{eq:intermediateproofupperbound} and using the continuity of $\e_* \mapsto R_g(\e_*)$ at the point $\e_* = 0$ provided by Lemma~\ref{lemma:continuityatzero}, we obtain
\begin{equation} \label{eqproofProp2_4}
\forall \, \e_* \in (0,\, \e^{\dag} \,], \hspace{50pt} R_g(0) \leq \, \exp \left(12 \, \frac{\e_*}{\e} \, (\kappa_0^{-1}-1) \right) R_g(\e_*).\hspace{120pt}
\end{equation}
On the one hand, if $\e \leq \e^{\dag}$, we take $\e_* = \e$ in~\eqref{eqproofProp2_4} to get

\begin{equation} \label{eq:caseinf}
R_g(0) \leq \, \exp \left(12 \, (\kappa_0^{-1}-1) \right)  R_g(\e).
\end{equation}

\medskip

\noindent On the other hand, if $\e \geq \e^{\dag}$, we apply~\eqref{eqproofProp2_4} with $\e_* = \e^{\dag}$ to get

\begin{equation} \label{eqproofProp2_5}
R_g(0) \leq \, \exp \left(12 \, \frac{\e^{\dag}}{\e} (\kappa_0^{-1}-1) \right) R_g(\e^{\dag}).
\end{equation}

\smallskip

\noindent Since it holds for any $\e_*>0$ that $\|\M_{\e_*}^{\varphi_{\e_*}^{-1}(g)} \|_{\infty} \leq \e_*^{-1}$, we have in particular that, for any $\e_* \in [\e^{\dag}, \e]$,

\begin{equation*}
\|\M_{\e_*}^{\varphi_{\e_*}^{-1}(g)} \|_{\infty} \leq (\e^{\dag})^{-1},
\end{equation*}
which, combined with~\eqref{eq:relatRprimeR} and~\eqref{eq:ginftykappa0}, leads to

\begin{equation} \label{eq:intermediateproofupperbound2}
\forall \, \e_* \in [\e^{\dag}, \e], \hspace{30pt} R_g'(\e_*) \geq - \e^{-1} \, \left(  (\kappa_0^{-1}-1) + \max \left(\kappa_0^{-1}-1, \, \frac{\e}{\e^{\dag}} \right)\right) R_g(\e_*). \hspace{100pt}
\end{equation}
By a Grönwall argument over~\eqref{eq:intermediateproofupperbound2}, we obtain

\begin{equation} \label{eq:RprimRedag}
R_g(\e^{\dag}) \leq \exp \left(\frac{\e-\e^{\dag}}{\e} \, \left(\kappa_0^{-1}-1 + \max \left(\kappa_0^{-1}-1, \, \frac{\e}{\e^{\dag}} \right) \right) \right) R_g(\e).
\end{equation}
Recalling that $\e^{\dag}$ is defined by~\eqref{eqdefepsdag}, we have
$$
\frac{\e}{\e^{\dag}} = \left(\left(\frac52 \, \gamma^{\dag} \right)^{\frac32} - 1 \right) \, (\kappa_0^{-1} - 1).
$$
As $\gamma^{\dag} = \left(\frac{4}{\pi} \right)^{\frac13} \left(\frac{5}{3} \right)^{\frac53}$, we have
$\left(\frac52 \, \gamma^{\dag} \right)^{\frac32} - 1  \leq 15,$ therefore

\smallskip

\begin{equation} \label{eq:bdmaxproof1}
    \left(\kappa_0^{-1}-1 + \max \left(\kappa_0^{-1}-1, \, \frac{\e}{\e^{\dag}} \right) \right) \leq 16 \, (\kappa_0^{-1} - 1).
\end{equation}
Combining~\eqref{eq:RprimRedag} with~\eqref{eq:bdmaxproof1}, we get

\smallskip

\begin{equation} \label{eq:RprimRedag2}
R_g(\e^{\dag}) \leq \exp \left(\frac{\e-\e^{\dag}}{\e} \, 16\left(\kappa_0^{-1}-1\right)  \right) R_g(\e).
\end{equation}
Finally, combining~\eqref{eqproofProp2_5} with~\eqref{eq:RprimRedag2}, we obtain, in this case where we assumed $\e \geq \e^{\dag}$,
$$
R_g(0) \leq \exp \left( \left\{12\frac{\e^{\dag}}{\e} + 16\frac{\e - \e^{\dag}}{\e} \right\}(\kappa_0^{-1}-1) \right) R_g(\e) \leq  \exp \left( 16 \, (\kappa_0^{-1}-1) \right) R_g(\e).
$$
We thus conclude, with~\eqref{eq:caseinf}, that in any case, we have

$$
R_g(0) \leq  \exp \left( 16 \, (\kappa_0^{-1}-1) \right) R_g(\e),
$$

\bigskip

\noindent yielding the required result as $R_g(0) = H_{0} \left[\frac{f}{1- \e f} \left|M^{\frac{f}{1- \e f}} \right. \right]$ and $R_g(\e) = H_{\varepsilon} \left[f\left|\M_{\varepsilon}^{f} \right. \right]$.

\medskip

\section{Proof of Corollary~\ref{theorem:BFDcerci}} \label{section:proofcorollarybfd}

\noindent This section is devoted to the proof of Corollary~\ref{theorem:BFDcerci}. Throughout this section, we use the notation
\begin{equation} \label{qtieshatbis}
\begin{pmatrix}  \rho_{\varepsilon}\\ \rho_{\varepsilon} u_{\varepsilon} \\ 3 \rho_{\varepsilon} T_{\varepsilon} + \rho_{\varepsilon}|u_{\varepsilon}|^2 \end{pmatrix} = \int_{\R^3} \varphi_{\e} (f)(v) \, \begin{pmatrix}  1 \\ v \\ |v|^2 \end{pmatrix} \, \dd v, \qquad \begin{pmatrix}  \rho \\ \rho  u  \\ 3 \rho  T  + \rho |u |^2 \end{pmatrix} = \int_{\R^3} f(v) \, \begin{pmatrix}  1 \\ v \\ |v|^2 \end{pmatrix} \, \dd v.
\end{equation}
In the following, we use the bound
\begin{equation} \label{ineq:Tkappa0}
1 - \varepsilon f \geq \kappa_0 \implies T_{\varepsilon} \geq \kappa_0 T.
\end{equation}
This comes by first remarking that, if $1 - \e f \geq \kappa_0$,
$$
3 \rho_{\varepsilon} T_{\varepsilon} = \int_{\R^3} \varphi_{\varepsilon}(f)(v) |v-u_{\varepsilon}|^2 \, \dd v \geq \int_{\R^3} f(v) |v-u_{\varepsilon}|^2 \, \dd v = \int_{\R^3} f(v) |v-u-(u_{\varepsilon}-u)|^2 \, \dd v = 3 \rho T + \rho |u_{\varepsilon}-u|^2,
$$
implying $\rho_{\e} T_{\e} \geq \rho T$, and the fact that
\begin{equation}
    \label{ineq:rhokappa0}
    \rho_{\varepsilon} = \int_{\R^3} \varphi_{\varepsilon}(f)(v) \, \dd v \leq \kappa_0^{-1} \int_{\R^3} f(v) \, \dd v = \kappa_0^{-1} \, \rho.
\end{equation}

\subsection{Super quadratic kernels}

\begin{proof}
Let $\kappa_0 \in (0,1)$, $\varepsilon > 0$ and $0 \leq f \in L^1_2(\R^3) \setminus \{0\}$ such that $1 - \varepsilon f \geq \kappa_0$. We have from \eqref{eq:compareproductions} that
\begin{equation} \label{eq1proofsq}
\Deps (f) \geq  \kappa^4_0 \, \D_0(\varphi_{\varepsilon}(f)).
\end{equation}  
We apply~\cite[Theorem 2.1]{villani2003} to the function $\varphi_{\varepsilon}(f)$, adapted (since it is written for normalized functions) with the change of variable $w = \sqrt{T_{\varepsilon}} \; v + u_{\e}$, and obtain
\begin{equation*}
\D_0(\varphi_{\varepsilon}(f)) \geq \min(1,T_{\varepsilon}) \frac{|\Sb^2|}{28} \,  (3 \rho_{\varepsilon} T_{\varepsilon} -  \rho_{\varepsilon} T^*(\varphi_{\varepsilon}(f))) \; H_0 \left[\varphi_{\varepsilon}(f) \Big| M^{\varphi_{\varepsilon}(f)} \right],
\end{equation*}
where $ T^*(g)$ stands for the maximal directional temperature of $g$, defined by $\frac{\underset{e \in \Sb^2}{\sup} \int_{\R^3} g(v) \, (v \cdot e)^2 \, \dd v}{\int_{\R^3} g(v) \, \dd v}$. Since in particular,
$$
3 \rho_{\varepsilon} T_{\varepsilon} -  \rho_{\varepsilon} T^*(\varphi_{\varepsilon}(f)) \geq 2 \rho_{\varepsilon} T_*(\varphi_{\varepsilon}(f)),
$$
where $T_*(g)$ stands for the minimal directional temperature of $g$, defined in \eqref{eqdef:minTi}, we obtain
\begin{equation} \label{eq2proofsq}
\D_0(\varphi_{\varepsilon}(f)) \geq   \min(1,T_{\varepsilon})\, \frac{2\pi }{7} \, \rho_{\varepsilon} T_{*}(\varphi_{\varepsilon}(f)) \; H_0 \left[\varphi_{\varepsilon}(f) \Big| M^{\varphi_{\varepsilon}(f)} \right].
\end{equation} 
Applying Theorem~\ref{theorem:FD}'s lower-bound inequality \eqref{eq:theorementropiesFD2}, we obtain
\begin{equation} \label{eq3proofsq}
H_0 \left[\varphi_{\varepsilon}(f) \Big| M^{\varphi_{\varepsilon}(f)} \right] \geq  H_{\varepsilon}[f|\Me].
\end{equation}
Combining \eqref{eq1proofsq}--\eqref{eq3proofsq} yields
$$
\Deps (f) \geq  \frac{2\pi}{7} \, \kappa_0^4 \,  \min(1, T_{\varepsilon}) \, \rho_{\varepsilon} \, T_{*}(\varphi_{\varepsilon}(f)) \; H_{\varepsilon}[f|\Me].
$$
The required result is obtained after remarking that, from \eqref{ineq:Tkappa0},  $T_{\varepsilon} \geq \kappa_0 T$, and 
$$
\rho_{\varepsilon} \,T_{*}(\varphi_{\varepsilon}(f)) = \underset{e \in \Sb^2}{\inf} \int_{\R^3} \varphi_{\varepsilon}(f)(v) \, (v \cdot e)^2 \, \dd v  \geq \underset{e \in \Sb^2}{\inf} \int_{\R^3} f(v) \, (v \cdot e)^2 \, \dd v  =  \rho \, T_{*}(f).
$$
\end{proof}

\subsection{General kernels}

\begin{proof}
Let $\kappa_0 \in (0,1)$, $\varepsilon > 0$ and $0 \leq f \in L^1_2(\R^3) \setminus \{0\}$ such that $1 - \varepsilon f \geq \kappa_0$. Again, \eqref{eq:compareproductions} yields
$$
\Deps (f) \geq  \kappa^4_0 \, \D_0(\varphi_{\varepsilon}(f)).
$$
Remarking that $\varphi_{\varepsilon}(f)(v) \geq f(v) \geq K_0 \, e^{-A_0 |v|^{q_0}}$, we apply~\cite[Theorem 4.1]{villani2003} to the function  $\varphi_{\varepsilon}(f)$ and obtain for any $\alpha \in (0,1)$ the existence of some $\overline{K}_{\alpha}\left(\varphi_{\varepsilon}(f) \right)$ depending only on $\alpha$, upper and lower bounds on $\rho_{\varepsilon}$, $T_{\varepsilon}$ (see Remark~\ref{remark:proofgeneralkernel} below), $q_0$ and upper bounds on $A_0$, $1/K_0$, $\|\varphi_{\varepsilon}(f)\|_{L^1_s}$ and $\|\varphi_{\varepsilon}(f)\|_{H^k}$ where $s = s(\alpha,q_0,\beta_+,\beta_-)$ and $k=k(\alpha,s,\beta_+,\beta_-)$ such that
$$
\D_0(\varphi_{\varepsilon}(f)) \geq \overline{K}_{\alpha}\left(\varphi_{\varepsilon}(f) \right) H_0 \left[\varphi_{\varepsilon}(f) \Big| M^{\varphi_{\varepsilon}(f)} \right]^{1+ \alpha}.
$$
Again, Theorem~\ref{theorem:FD}'s lower-bound inequality \eqref{eq:theorementropiesFD2} implies
$$
H_0 \left[\varphi_{\varepsilon}(f) \Big| M^{\varphi_{\varepsilon}(f)} \right] \geq  H_{\varepsilon}[f|\Me],
$$
so that
$$
\Deps (f) \geq \kappa_0^4 \, \overline{K}_{\alpha}\left(\varphi_{\varepsilon}(f) \right) \, \, H_{\varepsilon}[f|\Me]^{1 + \alpha}.
$$
Moreover,  we can upper-bound $\|\varphi_{\varepsilon}(f)\|_{L^1_s}$ by $\kappa_0^{-1} \|f\|_{L^1_s}$, and $\|\varphi_{\varepsilon}(f)\|_{H^k}$ by a polynomial in $(\|f\|_{H^l})_{l \leq k}$, $\kappa_0^{-1}$ and an upper bound on $\varepsilon$, since for $l \geq 1$, and $x$ such that $1 - \varepsilon x \geq \kappa_0$,
$$
|\varphi_{\varepsilon}^{(l)}(x)| = \frac{ \, l! \, \varepsilon^{l-1}}{(1-\varepsilon x)^{l+1}} \leq l! \, \kappa_0^{-l-1} \, \varepsilon^{l-1}.
$$
 
\noindent We end the proof by using
\begin{itemize}
    \item Equations~\eqref{ineq:Tkappa0}--\eqref{ineq:rhokappa0} stating that $\rho_{\varepsilon} \leq \kappa_0^{-1} \, \rho$ and $T_{\varepsilon} \geq \kappa_0 \, T$,
    \item the fact that $\rho \leq \rho_{\varepsilon}$, coming from
    $$
    \int_{\R^3} f \, \dd v \leq \int_{\R^3} \frac{f}{1- \e f} \, \dd v, 
    $$
        \item the fact that $T_{\varepsilon} \leq \kappa_0^{-1} \, T$, coming from
$$
3 \rho_{\varepsilon} T_{\varepsilon} + \rho_{\varepsilon} |u_{\varepsilon}-u|^2 = \int_{\R^3} \varphi_{\e}(f)(v) \, |v-u|^2\, \dd v \leq \kappa_0^{-1} \int_{\R^3} f(v) \, |v-u|^2\, \dd v =  \kappa_0^{-1} \, 3 \rho T.
$$
implying $\rho_{\e} T_{\e} \leq \kappa_0^{-1} \rho T$ hence $T_{\e} \leq \kappa_0^{-1} \frac{\rho}{\rho_{\e}} T \leq \kappa_0^{-1} T$.
\end{itemize}
\end{proof}  
\begin{remark} \label{remark:proofgeneralkernel}
In~\cite[Theorem 4.1]{villani2003}, the constant $\Bar{K}_{\alpha} \left(\varphi_{\varepsilon}(f) \right)$ is said to depend on $\rho_{\varepsilon}$, $T_{\varepsilon}$. Having a look at the proof there, this dependence can be relaxed into a dependence on upper and lower bounds of those quantities.
\end{remark}

\subsection{Inverse result}
Let $\varepsilon \in (0, \frac12)$, $B_0 > 0$ and $0 \leq \beta < 2$. We assume that
$$
\int_{\Sb^2} B(v,v_*,\sigma) \, \dd \sigma \leq B_0 \, (1 + |v-v_*|^{\beta}).
$$
Relying on Bobylev and Cercignani's~\cite[Theorem 1]{bobylev1999}, and the nice formulation of this theorem by Villani~\cite[Theorem 1.1]{villani2003}, we know that there exists a family of normalized functions $(g_l)$, in the sense that for any~$l$,
\begin{equation} \label{eq:glisnormalized}
\int_{\R^3} g_l(v) \begin{pmatrix}
1 \\ v \\ |v|^2
\end{pmatrix} \dd v = \begin{pmatrix}
1 \\ 0 \\ 3
\end{pmatrix},
\end{equation}
such that (taking $\delta = 1 - \frac{1+2 \varepsilon}{2} > 0$ in \cite[Theorem 1.1]{villani2003}) 
\begin{equation} \label{eq:bdsglpOi}
\forall v \in \R^3, \qquad 2 \geq  g_l(v) \geq \frac{1+2 \varepsilon}{2} \, (2 \pi)^{-3/2}\, e^{-|v|^2},
\end{equation}
and
\begin{equation}
    \frac{\D_0(g_l)}{H_0[g_l|M^{g_l}]} \underset{l \to \infty}{\longrightarrow} 0.
\end{equation}
We now consider
$$
f_l^{\varepsilon} = \frac{g_l}{1 + \varepsilon g_l} \equiv \varphi_{\varepsilon}^{-1}(g_l).
$$
As $\varphi_{\e}^{-1}$ is increasing, and from~\eqref{eq:bdsglpOi}, we have
$$
f_l^{\varepsilon}(v) \equiv \frac{g_l(v)}{1 + \varepsilon g_l(v)} \geq \frac{g_l(v)}{1 + 2 \varepsilon} \geq \frac{1+2 \varepsilon}{2 (1 + 2 \varepsilon)} \, (2 \pi)^{-3/2}\, e^{-|v|^2} = \frac{1}{2} \, (2 \pi)^{-3/2}\, e^{-|v|^2}.
$$
We denote
$$
\int_{\R^3} f_l^{\varepsilon}(v) \begin{pmatrix}
1 \\ v \\ |v|^2
\end{pmatrix} \dd v = \begin{pmatrix}
\rho^l_{\varepsilon} \\ \rho^l_{\varepsilon} u^l_{\varepsilon} \\ 3 \rho^l_{\varepsilon} T^l_{\varepsilon} + \rho^l_{\varepsilon} |u^l_{\varepsilon}|^2
\end{pmatrix},
$$
We have, from $f_l^{\varepsilon} = \frac{g_l}{1 + \varepsilon g_l}$ and $g_l \leq 2$,
$$
\frac{1}{1 + 2 \varepsilon}\int_{\R^3} g_l(v) \, \dd v  \leq \int_{\R^3} f_l^{\varepsilon}(v) \, \dd v \leq \int_{\R^3} g_l(v) \, \dd v,
$$
that is
$$
\frac{1}{1 + 2 \varepsilon} \leq \rho^l_{\varepsilon} \leq 1.
$$
Secondly,
$$
3 \rho_{\varepsilon}^l T^l_{\varepsilon} \leq 3 \rho^l_{\varepsilon} T^l_{\varepsilon} + \rho^l_{\varepsilon} |u^l_{\varepsilon}|^2 = \int_{\R^3} f_l^{\varepsilon}(v) \,  |v|^2 \, \dd v \leq \int_{\R^3} g_l(v) \,  |v|^2 \, \dd v  = 3,
$$
that is $\rho_{\varepsilon}^l T^l_{\varepsilon} \leq 1$, hence
$$
 T^l_{\varepsilon} \leq \frac{1}{\rho_{\varepsilon}^l} \leq 1 + 2 \varepsilon.
$$
On the other hand, since $g_l$ is normalized by~\eqref{eq:glisnormalized}, we have
$$
3 \rho_{\varepsilon}^l T^l_{\varepsilon} = \int_{\R^3} f_l^{\varepsilon}(v) \,  |v-u_{\varepsilon}^l|^2 \, \dd v  \geq \frac{1}{1 + 2 \varepsilon} \int_{\R^3} g_l(v) \,  |v-u_{\varepsilon}^l|^2 \, \dd v   = \frac{3 + |u_{\varepsilon}^l|^2}{1 + 2 \varepsilon} \geq \frac{3}{1 + 2 \varepsilon},
$$
that is $\rho_{\varepsilon}^l T^l_{\varepsilon} \geq \frac{1}{1 + 2 \varepsilon}$, hence
$$
 T^l_{\varepsilon} \geq \frac{1}{\rho_{\varepsilon}^l} \times \frac{1}{1 + 2 \varepsilon} \geq \frac{1}{1 + 2 \varepsilon}.
$$
Finally, since $f_l^{\varepsilon} = g_l - \varepsilon \, \frac{g_l^2}{1 + \varepsilon g_l}$, $g_l$ is normalized by~\eqref{eq:glisnormalized} and $g_l \leq 2$, we have
\begin{align*}
\rho^l_{\varepsilon} \, |u^l_{\varepsilon}| &= \left| \int_{\R^3} f_l^{\varepsilon}(v) \, v \, \dd v \right| = \varepsilon \left| \int_{\R^3} \frac{g_l(v)^2}{1 + \varepsilon g_l(v)} \, v \, \dd v \right| \leq \varepsilon \int_{\R^3} \frac{g_l(v)^2}{1 + \varepsilon g_l(v)} \, |v| \, \dd v \leq 2 \varepsilon \int_{\R^3} g_l(v) \, |v| \, \dd v \\
&\leq 2 \varepsilon \left(\int_{\R^3} g_l(v) \, \dd v \right)^{1/2} \left(\int_{\R^3} g_l(v) \, |v|^2 \, \dd v \right)^{1/2} = 2 \sqrt{3} \, \varepsilon,
\end{align*}
where the last inequality is a Cauchy-Schwarz argument. We then conclude, since $\varepsilon \in (0,\frac12)$, that
$$
|u^l_{\varepsilon}| \leq \frac{1}{\rho^l_{\varepsilon}} \, 2 \sqrt{3} \, \varepsilon \leq 2 \sqrt{3} \, (1 + 2 \varepsilon) \, \varepsilon \leq 4 \sqrt{3} \, \varepsilon.
$$
All in all, we proved that for any $l$, $f_l^{\varepsilon} \in \mathcal{C}^{\varepsilon}_{1,0,1}$.

\medskip

\noindent We now apply Equation \eqref{eq:compareproductions} to get
$$
\Deps (f_l^{\varepsilon}) \leq \D_0(g_l),
$$
and Proposition~\ref{prop:FDupper}'s upper-bound inequality, Equation \eqref{eq:theorementropiesFD}, which gives, for any $\varepsilon \in (0,\frac12)$, and denoting $\kappa^{\varepsilon}_0 = 1 - 2 \varepsilon$ (so that $1 - \varepsilon f^{\varepsilon}_l \geq \kappa_0^{\varepsilon}$),
$$
H_{\varepsilon} \left[f_l^{\varepsilon} \left|\M_{\varepsilon}^{f_l^{\varepsilon}} \right.\right] \geq \frac{1}{C_0(\kappa^{\varepsilon}_0)} \, H_0 \left[ \frac{f_l^{\varepsilon}}{1 - \varepsilon f_l^{\varepsilon}} \left|M^{\frac{f_l^{\varepsilon}}{1 - \varepsilon f_l^{\varepsilon}}} \right. \right] = \frac{1}{C_0(\kappa^{\varepsilon}_0)} \, H_0[g_l|M^{g_l}].
$$
We finally obtain that, for any $\varepsilon \in (0 ,\frac12)$,
\begin{equation}
    \frac{\Deps \left(f_l^{\varepsilon} \right)}{H_{\varepsilon} \left[f_l^{\varepsilon} \left|\M_{\varepsilon}^{f_l^{\varepsilon}} \right.\right]} \leq \frac{1}{C_0(\kappa^{\varepsilon}_0)} \times \frac{\D_0(g_l)}{H_0[g_l|M^{g_l}]} \underset{l \to \infty}{\longrightarrow} 0.
\end{equation}

\section{About the Landau-Fermi-Dirac equation} \label{section:LFD}
\noindent A study of the entropy-entropy production relationship for the Landau-Fermi-Dirac equation for various cross sections was recently conducted by Alonso, Bagland, Desvillettes and Lods in~\cite{ABDLentropy,ABDLsoft} (see also Desvillettes~\cite{desvillettesLandauhardpotential} and Alonso, Bagland, Lods~\cite{ABL}) and we do not (intend to) provide here any new result on this side. We refer the interested reader to these papers and the references therein for a detailed study of the Landau-Fermi-Dirac equation. Nevertheless, using the exact same strategy of transposing results from the Classical towards the Fermi-Dirac case that we used in the Boltzmann-Fermi-Dirac case (Corollary~\ref{theorem:BFDcerci}), we obtain an original proof of inequalities reminiscent of the ones stated in~\cite{ABDLentropy,ABDLsoft} and~\cite{desvillettesLandauhardpotential}, namely~\cite[Theorem 1.4]{ABDLentropy}, the first inequality in the proof of~\cite[Proposition 5.8]{ABDLsoft} and~\cite[Proposition~2]{desvillettesLandauhardpotential}. Let us first briefly introduce the Landau-Fermi-Dirac equation.

\medskip

\noindent In dimension $3$, the Landau-Fermi-Dirac operator writes, for $\varepsilon \geq 0$ and nonnegative $f \in L^1_2(\R^3)$ such that $1 - \varepsilon f \geq 0$,
\begin{equation} \label{eq:LFDoperator}
    Q^L_{\varepsilon}(f)(v) := \nabla_v \cdot \int_{\R^3} \Psi(|v-v_*|) \Pi(v-v_*) \Big[f_* (1 - \varepsilon f_*) \nabla f -f (1 - \varepsilon f) \nabla f_* \Big]\, \dd v_*.
\end{equation}
The case $\varepsilon = 0$ corresponds to the classical Landau operator. In \eqref{eq:LFDoperator}, we used the common short-hands $f \equiv f(v)$ and $f_* \equiv f(v_*)$, $\Pi(z)$ denotes the orthogonal projection on $(\R z)^{\perp}$, whose components are
$$
\Pi_{ij}(z) = \delta_{ij} - \frac{z_i z_j}{|z|^2},
$$
and $\Psi$ is sometimes called the kinetic potential. The entropy production associated to the Landau-Fermi-Dirac operator writes
\begin{equation} \label{eq:entropyproductionLFD}
    \Deps^L(f) = \frac12 \int_{\R^3 \times \R^3} \Psi(|v-v_*|) \, f f_* (1- \varepsilon f) (1 - \varepsilon f_*) \left| \Pi(v-v_*) \left(\frac{\nabla f}{f(1-\varepsilon f)} - \frac{\nabla f_*}{f_*(1-\varepsilon f_*)} \right) \right|^2 \, \dd v \, \dd v_*.
\end{equation}
We notice that \eqref{eq:entropyproductionLFD} can be rewritten (where $\varphi_{\e}$ is defined in~\eqref{eqdef:varphieps})
$$
\Deps^L(f) = \frac12 \int_{\R^3 \times \R^3} \Psi(|v-v_*|) \, \varphi_{\varepsilon}(f) \varphi_{\varepsilon}(f)_* (1- \varepsilon f)^2 (1 - \varepsilon f_*)^2 \left| \Pi(v-v_*) \left(\frac{\nabla \varphi_{\varepsilon}(f)}{\varphi_{\varepsilon}(f)} - \frac{\nabla \varphi_{\varepsilon}(f)_*}{\varphi_{\varepsilon}(f)_*} \right) \right|^2 \, \dd v \, \dd v_*.
$$
This directly implies that for any $\kappa_0 \in (0,1)$, $\varepsilon > 0$ and nonnegative $f \in L^1_2(\R^3) \setminus \{0\}$ such that $1 - \varepsilon f \geq \kappa_0$,

\begin{equation}
   \D^L_0(\varphi_{\varepsilon}(f)) \geq \Deps^L(f) \geq \kappa_0^4 \, \D^L_0(\varphi_{\varepsilon}(f)).
\end{equation}
This inequality, similar to \eqref{eq:compareproductions}, allows to apply the same strategy as in the Boltzmann-Fermi-Dirac case. Adapting the entropy-entropy production inequalities known for the Landau equation \cite[Chapter 3, Theorem 14]{villanireview} and~\cite[Remark 1]{desvillettesLandauhardpotential}, we get the following proposition.

 \begin{proposition}  \label{theorem:LFDcerci} 
 
\textbf{Entropy-entropy production inequalities for the Landau-Fermi-Dirac operator} \emph{(adaptation of~\cite[Chapter 3, Theorem 14]{villanireview} and~\cite[Remark 1]{desvillettesLandauhardpotential})}.
We recall that $\D_{\varepsilon}^L$, $H_{\varepsilon}$ and $T_*(f)$ are defined respectively in~\eqref{eq:entropyproductionLFD}, \eqref{eqdef:entropy}--\eqref{eqdef:entropyrelat} and \eqref{eqdef:minTi}.
 
\medskip

 $\bullet$ \textbf{Over-Maxwellian case \emph{[}Adaptation of~\cite[Chapter 3, Theorem 14, (i)]{villanireview}\emph{]}.} Assume $\Psi(|z|) \geq |z|^2$. Then, for any $\kappa_0 \in (0,1)$, $\varepsilon \geq 0$ and $0 \leq f \in L^1_2(\R^3) \setminus \{0\}$ such that $1 - \varepsilon f \geq \kappa_0$, we have
\begin{equation} \label{eq:cercignaniovermaxlandau}
    \Deps^L (f)  \geq 4 \, \kappa_0^4 \, \rho \, T_{*}(f) \, H_{\varepsilon}[f|\Me],
\end{equation}

\medskip

 $\bullet$ \textbf{Soft potentials \emph{[}Adaptation of~\cite[Chapter 3, Theorem 14, (ii)]{villanireview}\emph{]}.} Assume $\Psi(|z|) \geq |z|^2 (1 + |z|)^{-\beta}$ with $\beta > 0$. Then, for any $\kappa_0 \in (0,1)$, $\varepsilon \geq 0$ and $0 \leq f \in L^1_2(\R^3) \setminus \{0\}$ such that $1 - \varepsilon f \geq \kappa_0$, for any $s > 0$, there exists a constant $C^L_s(f)$, explicit and depending on $f$ only via $\rho$, $T$, $\kappa_0$, an upper bound on $\varepsilon$ and an upper bound on $H_{\varepsilon}(f)$ such that
\begin{equation} \label{eq:cercignanisoftlandau}
    \Deps^L (f)  \geq C^L_s(f) \, \left(\|f\|_{L^1_{s + 2}} + \|\nabla \sqrt{f}\|^2_{L^2_{1 +\frac{s}{2}}} \right)^{-\frac{\beta}{s}} \, H_{\varepsilon}[f|\Me]^{1 + \frac{\beta}{s}}.
\end{equation}
 
\medskip

 $\bullet$ \textbf{Hard potentials \emph{[}Adaptation of~\cite[Remark 1]{desvillettesLandauhardpotential}\emph{]}.}  Assume $\Psi(|z|) \geq |z|^{2 + \beta}$ with $\beta \in (0,1]$. Then, for any $\kappa_0 \in (0,1)$, $\varepsilon \geq 0$ and $0 \leq f \in L^1_2(\R^3) \setminus \{0\}$ such that $1 - \varepsilon f \geq \kappa_0$, there exist two explicit constants $K^L_1(f)$ and $K^L_2(f)$ which depend on $\beta$, on $f$ only via $\rho$, $T$, $\kappa_0$ and on an upper bound on $\|f\|^2_{L^2_6}$ such that
\begin{equation} \label{eq:cercignanihardlandau}
  \Deps^L (f) \leq K_1^L(f) \implies  \Deps^L (f)  \geq K^L_2(f) \, H_{\varepsilon}[f|\Me].
\end{equation}

\end{proposition}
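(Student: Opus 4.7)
The plan is to mirror exactly the scheme used for the Boltzmann-Fermi-Dirac case in Corollary~\ref{theorem:BFDcerci}: use the sandwich inequality $\D_0^L(\varphi_{\varepsilon}(f)) \geq \D_{\varepsilon}^L(f) \geq \kappa_0^4\, \D_0^L(\varphi_{\varepsilon}(f))$ displayed just before the proposition to move to the classical Landau setting; apply the known classical Landau entropy-entropy production estimates to the surrogate distribution $\varphi_{\varepsilon}(f)$; and then invoke Theorem~\ref{theorem:FD}'s lower-bound inequality to convert the resulting classical relative entropy into the Fermi-Dirac one. The three items of the proposition differ only by which classical Landau inequality is plugged in at the middle step.

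\textbf{Over-Maxwellian case.} I would simply apply \cite[Chapter~3, Theorem~14 (i)]{villanireview} to $\varphi_{\varepsilon}(f)$, giving
\[
\D_0^L(\varphi_{\varepsilon}(f)) \geq 4\, \rho_{\varepsilon}\, T_*(\varphi_{\varepsilon}(f)) \, H_0\bigl[\varphi_{\varepsilon}(f)\bigm|M^{\varphi_{\varepsilon}(f)}\bigr].
\]
Combining with the sandwich inequality and Theorem~\ref{theorem:FD} yields
\[
\D_{\varepsilon}^L(f) \geq 4\, \kappa_0^4\, \rho_{\varepsilon}\, T_*(\varphi_{\varepsilon}(f)) \, H_{\varepsilon}[f|\Me],
\]
and the bound $\rho_{\varepsilon}\, T_*(\varphi_{\varepsilon}(f)) \geq \rho\, T_*(f)$, already established in the proof of Corollary~\ref{theorem:BFDcerci} from $\varphi_{\varepsilon}(f) \geq f$, closes the estimate~\eqref{eq:cercignaniovermaxlandau}.

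\textbf{Soft and hard potentials.} The same three-step scheme applies verbatim: sandwich, classical Landau inequality applied to $\varphi_{\varepsilon}(f)$, then Theorem~\ref{theorem:FD}. The only genuine work is to control the norms of $\varphi_{\varepsilon}(f)$ that appear in the classical constants by the norms of $f$ (times powers of $\kappa_0^{-1}$ and of an upper bound on $\varepsilon$). For $L^1_s$ this is immediate from $\varphi_{\varepsilon}(f) \leq \kappa_0^{-1} f$, exactly as in Section~\ref{section:proofcorollarybfd}. For the weighted $L^2$ norm of $\nabla \sqrt{\varphi_{\varepsilon}(f)}$ appearing in~\eqref{eq:cercignanisoftlandau}, I would use
\[
\nabla \sqrt{\varphi_{\varepsilon}(f)} = \frac{\varphi_{\varepsilon}'(f)}{2\sqrt{\varphi_{\varepsilon}(f)}}\, \nabla f = \frac{1}{(1-\varepsilon f)^{3/2}}\, \nabla \sqrt{f},
\]
so that $\|\nabla \sqrt{\varphi_{\varepsilon}(f)}\|_{L^2_{1+s/2}} \leq \kappa_0^{-3/2}\, \|\nabla \sqrt{f}\|_{L^2_{1+s/2}}$. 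For the $L^2_6$ norm in the hard potential case, one has $\varphi_{\varepsilon}(f)^2 \leq \kappa_0^{-2} f^2$ so $\|\varphi_{\varepsilon}(f)\|_{L^2_6} \leq \kappa_0^{-1}\|f\|_{L^2_6}$. Finally, all macroscopic quantities $\rho_{\varepsilon}$, $T_{\varepsilon}$ of $\varphi_{\varepsilon}(f)$ are controlled from above and below by $\rho$, $T$ of $f$ and $\kappa_0^{\pm 1}$ via the chain of inequalities \eqref{ineq:Tkappa0}--\eqref{ineq:rhokappa0} already recorded in Section~\ref{section:proofcorollarybfd}, as well as $T_{\varepsilon} \leq \kappa_0^{-1} T$ and $\rho \leq \rho_{\varepsilon}$. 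Similarly, one needs $H_{\varepsilon}(f)$ to control $H_0(\varphi_{\varepsilon}(f))$ in the soft case; this follows from $\widetilde{H}_{\varepsilon}$ being essentially the classical entropy of $\varphi_{\varepsilon}(f)$ up to terms controlled by $\kappa_0$ and $\varepsilon$-uniform bounds on $\rho$ and $T$.

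\textbf{Main obstacle.} There is no deep obstacle: the whole proof is a routine transposition through the $\varphi_{\varepsilon}$ change of unknown, already fully rehearsed in the Boltzmann case of Corollary~\ref{theorem:BFDcerci}. The only genuinely delicate points are (i) verifying that the dependence of the constants in \cite[Chapter 3, Theorem 14]{villanireview} and \cite[Remark~1]{desvillettesLandauhardpotential} on $\rho$, $T$ and the listed norms is indeed through upper and lower bounds only (so that bounds on $\rho_{\varepsilon}, T_{\varepsilon}$ translate into bounds through $\rho, T, \kappa_0$), in the spirit of Remark~\ref{remark:proofgeneralkernel}, and (ii) controlling $\|\nabla\sqrt{\varphi_{\varepsilon}(f)}\|_{L^2}$ in terms of $\|\nabla\sqrt{f}\|_{L^2}$, which is the only new elementary computation as shown above. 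Everything else reuses ingredients already displayed in the paper.
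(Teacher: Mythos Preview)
Your proposal is correct and follows exactly the paper's intended approach: the paper does not provide a detailed proof of Proposition~\ref{theorem:LFDcerci} but explicitly presents it as a direct adaptation of the Boltzmann-Fermi-Dirac scheme (sandwich inequality, classical Landau estimate on $\varphi_{\varepsilon}(f)$, then Theorem~\ref{theorem:FD}), which is precisely what you spell out. Your explicit computation $\nabla\sqrt{\varphi_{\varepsilon}(f)} = (1-\varepsilon f)^{-3/2}\,\nabla\sqrt{f}$ and the norm comparisons are the only details the paper leaves implicit, and they are correct.
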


\noindent We briefly discuss here the qualitative difference of our results with respect to the ones in~\cite{ABDLentropy,ABDLsoft} and~\cite{desvillettesLandauhardpotential}. The inequality we obtain in the over-Maxwellian case~\eqref{eq:cercignaniovermaxlandau} is almost identical to the one in~\cite[Theorem 1.4]{ABDLentropy}, with however an extra quantity in the constant in their inequality. As for the soft potentials case, the inequality in the proof of~\cite[Proposition 5.8]{ABDLsoft} involves the hard potential of order $\eta > 0$ entropy production (for some chosen $\eta>0$), which relates in~\eqref{eq:cercignanisoftlandau} with the $\|\nabla \sqrt{f}\|^2_{L^2_{1 +\frac{s}{2}}}$ norm (for some chosen $s>0$). We also point out that, while~\cite[Proposition 5.8]{ABDLsoft} holds for soft potentials of order at most $-\frac43$, there is no such restriction in~\eqref{eq:cercignanisoftlandau}. As far as hard potentials are concerned, the result we obtain (which could actually be more precise, but somewhat more complicated, by using ~\cite[Theorem 1]{desvillettesLandauhardpotential} instead of~\cite[Remark 1]{desvillettesLandauhardpotential}) is very similar to the one obtained by Desvillettes in~\cite[Proposition 2]{desvillettesLandauhardpotential}, with the difference that his result involves the relative Fisher information instead of the relative entropy (hence yielding a stronger result); our methodology only gives another proof to obtain it. Also, the inequality obtained in~\cite[Theorem 1.4]{ABDLentropy} comes with a more complicated constant than in~\cite[Proposition 2]{desvillettesLandauhardpotential} (similarly, than~\eqref{eq:cercignanihardlandau}), but holds without any assumption of smallness on $\Deps^L(f)$. We mention that a statement of the type~\eqref{eq:cercignanihardlandau} entails an exponential decay with explicit constants (if $K_1^L(f)$ and $K_2^L(f)$ are controlled below) as shown in~\cite[Lemma 1]{desvillettesLandauhardpotential}. Finally, we can also obtain inequalities for hard potentials of order $\beta > 1$ by adapting~\cite[Chapter~3,~Theorem~14,~(iii)]{villanireview}.

\begin{remark}
We want to mention~\cite{desvillettescoulomb} which bounds below the entropy production with a weighted version of the Fisher information, which would lead to a weighted relative entropy with power one in the right-hand-side of \eqref{eq:cercignanisoftlandau}. A similar result for hard potentials was proven in~\cite{carrapatoso}. We refer the interested reader to \cite{proceedingdesvilletteslandau} (in french) for a discussion on the entropic structure of the Landau operator. However our method of adapting results from the Classical to the Fermi-Dirac situation breaks here, as we did not prove any link between weighted entropies.
\end{remark}

\medskip
 
\noindent \textbf{Convergence towards equilibrium.} As an example of application of Proposition~\ref{theorem:LFDcerci}, we provide a convergence to equilibrium proposition in the hard potential case. We remind that this type of result was already obtained in~\cite{ABDLentropy} (and in~\cite{ABDLsoft} for soft potentials), although the constants that we obtain do differ, and our proof is shorter.

\begin{proposition} \textbf{Hard potentials case.}
Assume $\Psi(|z|) = |z|^{2 + \beta}$ with $\beta \in (0,1]$.
Consider $0 \leq f^{in} \in L^1_{s_{\beta}}(\R^3) \cap L^{\infty}(\R^3)$ with $s_{\beta} = \max \left( \frac{3 \beta}{2}, 4 - \beta \right)$. Then there exists $\e^{in} > 0$ depending only on $f^{in}$ such that for any $\e \in (0,\e^{in}]$, there exists a solution $f^{\e}$ to the homogeneous Landau-Fermi-Dirac equation (associated to the collision operator $Q_{\e}^L$ defined in~\eqref{eq:LFDoperator} and the initial distribution $f^{in}$) and two constants $C_1, C_2 > 0$ such that for any $t \geq 0$,
\begin{equation}
    \|f^{\e}(t, \cdot) - \M_{\e}^{f^{in}}\|_{L^1_2} \leq C_1 \, e^{-C_2 t}.
\end{equation}
\end{proposition}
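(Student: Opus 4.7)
The plan is to combine the hard-potentials Cercignani-type inequality~\eqref{eq:cercignanihardlandau} with suitable uniform-in-time bounds on the solution and the Csiszár--Kullback--Pinsker inequality~\eqref{eq:CKL1L2ferminonopti}, following the general scheme used in the proof of Proposition~\ref{prop:ifthmBFD}. First, I would invoke the existence theory developed in~\cite{ABDLentropy} (or the references cited therein): under the assumption $f^{in} \in L^1_{s_\beta}(\R^3)\cap L^\infty(\R^3)$ with $s_\beta = \max(3\beta/2, 4-\beta)$, and provided $\varepsilon$ is small enough (say $\varepsilon \leq \varepsilon^{in}$ with $\varepsilon^{in}$ depending only on $\|f^{in}\|_{L^\infty}$, so that $1 - \varepsilon f^{in} \geq \kappa_0^{in}$ for some $\kappa_0^{in} \in (0,1)$), a weak solution $f^\varepsilon$ to the spatially homogeneous LFD equation exists and preserves mass, momentum and energy, so that in particular $\mathcal{M}_\varepsilon^{f^\varepsilon(t,\cdot)} = \mathcal{M}_\varepsilon^{f^{in}}$ for all $t\geq 0$.

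Next, I would establish, again by relying on~\cite{ABDLentropy}, the uniform in time bounds that are required to make the constants in~\eqref{eq:cercignanihardlandau} time-independent. Namely: (i) a uniform upper bound $\|f^\varepsilon(t,\cdot)\|_{L^2_6} \leq C$, so that $K^L_1(f^\varepsilon(t,\cdot))$ is bounded below by some $\overline{K}_1 > 0$ and $K_2^L(f^\varepsilon(t,\cdot))$ by some $\overline{K}_2 > 0$; (ii) a uniform lower bound $1 - \varepsilon f^\varepsilon(t,v) \geq \kappa_0$ for some $\kappa_0 \in (0,1)$ depending only on $f^{in}$ (and not on $t$ nor $\varepsilon$), which, together with the conservation laws providing fixed $\rho$ and $T$, ensures that the constants in~\eqref{eq:cercignanihardlandau} depend only on $f^{in}$. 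Choosing $\varepsilon^{in}$ small enough (shrinking it if necessary) so that $\varepsilon \|f^{in}\|_{L^\infty}$ is small, combined with a maximum principle type argument for the LFD operator preserving $0 \leq f \leq \varepsilon^{-1}$, yields such a uniform bound.

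With these bounds in hand, I would apply Proposition~\ref{theorem:LFDcerci}, Equation~\eqref{eq:cercignanihardlandau}: for every $t\geq 0$, either $\mathcal{D}_\varepsilon^L(f^\varepsilon(t,\cdot)) > \overline{K}_1$, in which case the entropy decreases by at least a fixed amount over that time, or $\mathcal{D}_\varepsilon^L(f^\varepsilon(t,\cdot)) \leq \overline{K}_1$ and then $\mathcal{D}_\varepsilon^L(f^\varepsilon(t,\cdot)) \geq \overline{K}_2\, H_\varepsilon[f^\varepsilon(t,\cdot)|\mathcal{M}_\varepsilon^{f^{in}}]$. As in~\cite[Lemma~1]{desvillettesLandauhardpotential}, a short dichotomy argument shows that the first alternative cannot persist for all times (otherwise the entropy would become negative), so that after some explicit time $T_0 \geq 0$ we are in the second regime, and combining with the entropy dissipation identity
\[
\frac{\dd}{\dd t} H_\varepsilon[f^\varepsilon(t,\cdot)|\mathcal{M}_\varepsilon^{f^{in}}] = -\mathcal{D}_\varepsilon^L(f^\varepsilon(t,\cdot))
\]
a Grönwall argument yields the exponential decay $H_\varepsilon[f^\varepsilon(t,\cdot)|\mathcal{M}_\varepsilon^{f^{in}}] \leq \widetilde{C}_1 e^{-\overline{K}_2 t}$ for all $t\geq 0$. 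Finally, the weighted CKP inequality~\eqref{eq:CKL1L2ferminonopti} of Proposition~\ref{theorem:CKnonopti} (with $r=1$ and $\varpi(v) = (1+|v|^2)$) converts this into the announced $L^1_2$ estimate, using the uniform bound on $\|f^\varepsilon(t,\cdot)\|_{L^1_4}$.

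The main obstacle is clearly step two: obtaining the uniform-in-time $L^\infty$ (or $L^2_6$) control on $f^\varepsilon$ together with the uniform lower bound $1 - \varepsilon f^\varepsilon \geq \kappa_0$. The $L^\infty$-type propagation for LFD in the hard potential regime has been addressed in~\cite{ABDLentropy}, and the saturation bound $f^\varepsilon \leq \varepsilon^{-1}$ built into the equation allows, for $\varepsilon$ small enough compared to the initial $L^\infty$ norm, to extract a uniform $\kappa_0$. Once these bounds are granted, the remainder of the proof is essentially the same Grönwall-plus-CKP machinery used in Proposition~\ref{prop:ifthmBFD}, only with the hard-potential threshold version of Cercignani's inequality replacing the sub-linear one, and with the entropy--entropy production inequality producing genuine exponential decay instead of a polynomial rate.
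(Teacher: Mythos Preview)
Your proposal is correct and follows essentially the same route as the paper's proof: invoke the existence and uniform-in-time $L^\infty$ control from the LFD literature (the paper cites~\cite[Corollary~3.7]{ABLorig} to get $1-\varepsilon\|f^\varepsilon(t,\cdot)\|_\infty \geq \tfrac12$ for $t\geq 1$, and~\cite[Theorem~1.3]{ABDLentropy} for moment generation, which together give the $L^2_6$ bound), apply~\eqref{eq:cercignanihardlandau}, and conclude exponential entropy decay via the dichotomy argument of~\cite[Lemma~1]{desvillettesLandauhardpotential}. Your explicit inclusion of the CKP step to pass from entropy to $L^1_2$ is more detailed than the paper, which leaves this implicit.
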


\begin{proof}
The proof is similar to the one of Proposition~\ref{prop:ifthmBFD}, to which we refer the reader, and we only outline here the crucial points. First, we rely on~\cite[Corollary 3.7]{ABLorig} to obtain the existence of $\e^{in} > 0$ depending only on $f^{in}$ such that for any $\e \in (0,\e^{in}]$, the solution $f^{\e}$ associated to LFD$_{\e}$ satisfies 
$$
\sup_{t \geq 1} \; (1 - \e \|f^{\e}(t, \cdot)\|_{\infty}) \geq \frac12.
$$
We are now able to use~\eqref{eq:cercignanihardlandau}. In order for the proof to be complete, we must ensure that the coefficient that multiplies the relative entropy can be lower bounded uniformly in time. It amounts to upper bound the $L^2_6$ norm of $f^{\e}(t,\cdot)$ uniformly in time. This is possible as we have a uniform-in-time $L^{\infty}$ bound and~\cite[Theorem 1.3]{ABDLentropy} ensures the generation and uniform-in-time boundedness of all moments at some explicit time. Then the same argument as in~\cite[Lemma 1]{desvillettesLandauhardpotential} provides the announced exponential decay to equilibrium.
\end{proof}

\section{\texorpdfstring{$L^1$-$L^2$ weighted Csiszár-Kullback-Pinsker inequality}{L1L2 weighted Csiszár-Kullback-Pinsker inequality}}  \label{appendix:CK}

\noindent In this section, we discuss an optimized version of the $L^1$-$L^2$ weighted Csiszár-Kullback-Pinsker (CKP) inequality presented in Proposition~\ref{theorem:CKnonopti}. For a discussion on the CKP inequality and its previous generalizations, we refer the reader to the paragraph preceding Proposition~\ref{theorem:CKnonopti} in Subsection~\ref{subsection:CKP}. 

\medskip

\noindent We highlight that the idea of the proof of \eqref{eq:CKL1L2fermi} in the case $r=1$, $\varpi=1$, can be found in~\cite{lu2001spatially}, and the proof of \eqref{eq:CKL1L2be} (Bose-Einstein case) for $r=1$, $\varpi=1$ was also done with the same approach by Lu~\cite{lu2004isotropic}. Notice moreover that we consider here the case when $f$ is defined on $\R^3$, but \eqref{eq:CKL1L2classical}--\eqref{eq:CKL1L2fermi} and \eqref{eq:CKL1L2be} do hold in the broader setting where $f$ and $\varpi$ are defined on an arbitrary measured set, as long as the equilibrium distribution is well-defined. This is further detailed in Appendix~\ref{appendix:entropystudy}. Finally, we inform the reader that the Bose-Einstein case is briefly discussed in Appendix~\ref{appendix:boseeinstein}, where useful definitions may be found.

\begin{proposition} \textbf{$L^1$-$L^2$ weighted Csiszár-Kullback-Pinsker inequality.  \emph{[optimal]}} \label{theorem:CK}

\smallskip

\noindent Let $\varpi : \R^3 \to \R_+$ be measurable, and $r \in [1,2]$. Denote
\begin{equation} \label{eqdef:biglambda}
    \Lambda(\lambda) := \begin{cases}  
    \qquad \quad 2 &\text{ if } \lambda = 1,\\
    \displaystyle \frac{(\lambda-1)^2}{\lambda \log \lambda - \lambda + 1} \qquad &\text{ if } \lambda \in \R_+ \setminus\{1\}.
    \end{cases}
\end{equation}

\medskip

$\bullet$ \textbf{Classical CKP inequality.} For any $0\leq f \in L^1_2(\R^3) \cap L \log L (\R^3) \setminus \{0\}$, assuming that the norms below are finite,
\begin{equation} \label{eq:CKL1L2classical}
\|(f-M) \, \varpi\|^2_{L^r} \leq \left\|M \, \varpi^2 \right\|_{L^{\frac{r}{2-r}}} \Lambda \left(\frac{\left\|f \, \varpi^2 \right\|_{L^{\frac{r}{2-r}}}}{\left\|M \, \varpi^2 \right\|_{L^{\frac{r}{2-r}}}} \right)  H_{0}[f|M],
\end{equation}
where we denoted $M \equiv M^f$ the Maxwellian distribution associated to $f$ and $H_0$ is defined in \eqref{eqdef:entropy}--\eqref{eqdef:entropyrelat} (with $\varepsilon = 0$).

\medskip

$\bullet$ \textbf{Fermi-Dirac CKP inequality.} For any $\varepsilon > 0$ and $0\leq f \in L^1_2(\R^3) \setminus \{0\}$ such that $1 - \varepsilon f \geq 0$ and $\displaystyle \gamma > \frac25$, assuming that the norms below are finite,
\begin{equation} \label{eq:CKL1L2fermi}
\|(f-\M) \, \varpi\|^2_{L^r} \leq \left\|\M \, \varpi^2 \right\|_{L^{\frac{r}{2-r}}} \Lambda \left(\frac{\left\|f \, \varpi^2 \right\|_{L^{\frac{r}{2-r}}}}{\left\|\M \, \varpi^2 \right\|_{L^{\frac{r}{2-r}}}} \right)  H_{\varepsilon}[f|\M],
\end{equation}
where we denoted $\M \equiv \Me$ the $\varepsilon$-Fermi-Dirac distribution associated to $f$ and $H_{\varepsilon}$ is defined in \eqref{eqdef:entropy}--\eqref{eqdef:entropyrelat}.
\end{proposition}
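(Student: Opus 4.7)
The plan is to start from the Taylor representation of the relative entropy obtained in Proposition~\ref{prop:firstformulaHeps},
\[
H_\varepsilon[f|\M] = \int_0^1 (1-\tau) \int_{\R^3} \frac{(f-\M)^2}{h_\tau(1-\varepsilon h_\tau)}\,\dd v\,\dd \tau, \qquad h_\tau := (1-\tau)\M + \tau f,
\]
and to bridge it to $\|(f-\M)\varpi\|_{L^r}^2$ by a single Hölder inequality followed by an explicit one-dimensional integration in $\tau$. For each fixed $\tau \in (0,1)$, I would decompose
\[
|(f-\M)\varpi|^r = \left(\frac{(f-\M)^2}{h_\tau(1-\varepsilon h_\tau)}\right)^{r/2} \bigl(h_\tau(1-\varepsilon h_\tau)\,\varpi^2\bigr)^{r/2}
\]
and apply Hölder with conjugate exponents $p = 2/r$ and $q = 2/(2-r)$, which are legitimate precisely because $r \in [1,2]$. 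Bounding $1-\varepsilon h_\tau \leq 1$ in the second factor and raising to the power $2/r$ yields, for every $\tau \in (0,1)$,
\[
\|(f-\M)\varpi\|_{L^r}^2 \leq \|h_\tau\,\varpi^2\|_{L^{r/(2-r)}}\, \int_{\R^3} \frac{(f-\M)^2}{h_\tau(1-\varepsilon h_\tau)}\,\dd v.
\]

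I would then apply Minkowski's inequality in $L^{r/(2-r)}$ (a Banach space since $r/(2-r) \geq 1$) together with the linearity $h_\tau\varpi^2 = (1-\tau)\M\varpi^2 + \tau f\varpi^2$ to obtain
\[
\|h_\tau\,\varpi^2\|_{L^{r/(2-r)}} \leq (1-\tau)\,a + \tau\,b, \qquad a := \|\M\varpi^2\|_{L^{r/(2-r)}},\; b := \|f\varpi^2\|_{L^{r/(2-r)}}.
\]
Dividing the preceding display by $(1-\tau)a + \tau b$, multiplying by $(1-\tau)$ and integrating over $\tau \in [0,1]$ produce
\[
\|(f-\M)\varpi\|_{L^r}^2 \cdot I(a,b) \leq H_\varepsilon[f|\M], \qquad I(a,b) := \int_0^1 \frac{1-\tau}{(1-\tau)a + \tau b}\,\dd \tau.
\]
The sharp constant is pinned down by the explicit computation of $I(a,b)$: with $\lambda = b/a$ and the substitution $u = 1 + \tau(\lambda-1)$, one finds $I(a,b) = (1/a)\cdot(\lambda\log\lambda - \lambda + 1)/(\lambda-1)^2 = 1/(a\,\Lambda(\lambda))$ for $\lambda \neq 1$, while the case $\lambda = 1$ integrates directly to $1/(2a) = 1/(a\,\Lambda(1))$. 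This produces exactly the announced constant $a\,\Lambda(b/a)$. The classical case $\varepsilon = 0$ is obtained verbatim with $\Phi_0''(x) = 1/x$ and the factor $1-\varepsilon h_\tau$ absent; the endpoint $r = 2$ is handled by reading $L^{r/(2-r)}$ as $L^\infty$ and applying Hölder with the pair $(1,\infty)$.

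I do not expect a substantial obstacle: the whole argument reduces to one Hölder inequality, one Minkowski bound, and one explicit one-dimensional integral. The only care points are (i) checking that $h_\tau$ and $1-\varepsilon h_\tau$ are strictly positive almost everywhere for $\tau \in (0,1)$, which follows from the strict positivity of $\M$ (ensured by $\gamma > 2/5$) together with $0 \leq f$ and $1-\varepsilon f \geq 0$; (ii) extending $\Lambda$ continuously at $\lambda = 1$ via the Taylor expansion $\lambda \log\lambda - \lambda + 1 = \tfrac12(\lambda-1)^2 + O((\lambda-1)^3)$, which gives $\Lambda(1) = 2$ consistently with the direct computation; and (iii) recovering the simpler constant of Proposition~\ref{theorem:CKnonopti} a posteriori by verifying the elementary pointwise bound $\Lambda(\lambda) \leq 2\max(1,\lambda)$, which upper-bounds $a\,\Lambda(b/a)$ by $2\max(a,b)$. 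The fact that the only estimates used are Hölder and Minkowski — both saturated on product-type configurations — is consistent with the claim that the constant is optimal.
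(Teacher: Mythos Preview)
Your proposal is correct and follows essentially the same approach as the paper: the paper routes the argument through a general $\Phi$-entropy framework (Proposition~\ref{theorem:generalCK} and Corollary~\ref{cor:realCKgeneral}), but the core steps are exactly yours --- the H\"older inequality with exponents $(2/r,\,2/(2-r))$ applied to the Taylor representation of $H_\varepsilon[f|\M]$, the bound $1-\varepsilon h_\tau\le 1$, Minkowski in $L^{r/(2-r)}$, and the explicit computation of $\int_0^1\frac{1-\tau}{(1-\tau)a+\tau b}\,\dd\tau=1/(a\Lambda(b/a))$ via the Taylor expansion of $\lambda\log\lambda-\lambda+1$.
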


\bigskip

\noindent The announced inequalities \eqref{eq:CKL1L2classicalnonopti}--\eqref{eq:CKL1L2ferminonopti} of Proposition~\ref{theorem:CKnonopti} are then consequence of Proposition~\ref{theorem:CK} and the fact that for any $a \geq 0$ and $b > 0$,
\begin{equation} \label{eq:Lambda2max}
b \, \Lambda \left( \frac{a}{b} \right) \leq 2 \, \max (a,b).
\end{equation}
The latter inequality can be deduced from the following ones. We claim that in fact,
\begin{alignat}{2}
\Lambda(\lambda) &\leq 1 + \lambda^{2/3}, \qquad \qquad  &&\lambda \in [0,1],  \label{ineq:Lbda1} \\
\Lambda(\lambda) &\leq   \frac23(2 + \lambda), \qquad && \lambda \in (1, 10) \label{ineq:Lbda2}\\ 
\Lambda(\lambda) &\leq   \frac{\lambda}{\log (\lambda) - 1}, \qquad && \lambda \geq 10. \label{ineq:Lbda3}
\end{alignat}
Notice the improvement of~\eqref{ineq:Lbda3} over \eqref{eq:Lambda2max} by a factor $\displaystyle \frac{1}{2 (\log a - \log b - 1)}$ when $\lambda = \frac{a}{b}$ is large, which, even if slowly, converges towards 0 as $a$ goes to infinity. The second inequality~\eqref{ineq:Lbda2} is well-known, and is actually the one used in the standard proof of the usual CKP inequality (see for instance~\cite{gilardoni}). As for the third one~\eqref{ineq:Lbda3}, we have for $\lambda \geq 10$,
$$
\Lambda(\lambda) = \frac{(\lambda-1)^2}{\lambda \log \lambda -\lambda + 1} =  \frac{\lambda-1}{\frac{\lambda}{\lambda-1} \log \lambda - 1} \leq \frac{\lambda}{\log (\lambda) - 1}.
$$
Finally, the proof of the first inequality~\eqref{ineq:Lbda1} is trickier and was suggested to me by Matthieu Dolbeault. Changing variables $x=\lambda^{1/3}$, it is enough to show that for any $x \in [0,1]$,
$$
(1 + x^2)(3x^3 \log x - x^3 + 1) - (1-x^3)^2 \geq 0.
$$
Developing, the above term actually equals $x^2 g(x)$, where
$$
g(x) = 1 + x - x^3 - x^4 + 3 (x + x^3) \log x.
$$
We can then conclude once we prove that $g$ is nonnegative on $(0,1)$. We compute its first four derivatives,
\begin{alignat*}{2}
    &g'(x) = 4 - 4 x^3 + (3 + 9 x^2) \log x, \qquad &&g''(x) = \frac{3}{x} + 9 x - 12 x^2 + 18 x \log x, \\
     &g'''(x) = 27 - \frac{3}{x^2} - 24 x + 18 \log x, \qquad &&g^{(4)}(x) = 6 (-4 + x^{-3} + 3 \, x^{-1}).
\end{alignat*}
The function $g^{(4)}$ is clearly nonnegative on $(0,1)$, hence $g'''$ is nondecreasing on $(0,1)$. But $g'''(1) = 0$, hence $g'''$ is nonpositive on $(0,1)$, thus $g''$ is nonincreasing on $(0,1)$. Again, $g''(1) = 0$ hence $g''$ is nonnegative, thus $g'$ is nondecreasing. Again, $g'(1) = 0$ hence $g'$ is nonpositive, thus $g$ is nonincreasing on $(0,1)$. Finally, $g(1) = 0$, so that $g$ is nonnegative on $(0,1)$.

\medskip

\noindent Let us now prove Proposition~\ref{theorem:CK}.

\begin{proof} This proposition is a consequence of the general inequality~\eqref{eq:realCKgeneral} in Corollary~\ref{cor:realCKgeneral} in Appendix~\ref{appendix:entropystudy}. We prove both inequalities~\eqref{eq:CKL1L2classical}--\eqref{eq:CKL1L2fermi} simultaneously, as the first one corresponds to the limit case $\varepsilon = 0$ of the second. Let us then consider $\varepsilon \geq 0$.

\medskip

\noindent We apply Corollary~\ref{cor:realCKgeneral} with $\Phi(x) \equiv \Phi_{\varepsilon}(x) =\int_0^x \log \varphi_{\varepsilon} (y) \, \dd y$, $J = (0,\varepsilon^{-1})$ and
$$
\F = \left\{0 \leq g \in L^1_2(\R^3) \left| \; 1 - \varepsilon g \geq 0, \quad  \int_{\R^3} g(v) \begin{pmatrix}
1 \\ v \\ |v|^2
\end{pmatrix} \dd v = \begin{pmatrix}
\rho \\ \rho \, u \\ 3 \, \rho T + \rho \, |u|^2
\end{pmatrix}\right. \right\},
$$
with  $\rho, T$ and $\varepsilon$ such that $\displaystyle \gamma > \frac{2}{5}$ (ensuring the existence of $\Me$). Then \eqref{eq:realCKgeneral} writes, since $H_{\Phi_{\varepsilon}} \equiv H_{\varepsilon}$ the $\varepsilon$-Fermi (or Classical in the case $\varepsilon = 0$) entropy, and $\displaystyle \frac{1}{\Phi_{\varepsilon}''}(x) = x(1-\varepsilon x) \leq x$,
$$
\left\| (f-\Me) \, \varpi \right\|_{L^r}^2 \leq \left(\int_0^1 (1-\tau) \, \left\|((1-\tau)\Me + \tau f) \, \varpi^2 \right\|_{L^{\frac{r}{2-r}}}^{-1} \, \dd \tau \right)^{-1} \, H_{\varepsilon}[f|\Me].
$$
We focus on the integral in the variable $\tau$. From Minkowski's inequality, we have
\begin{align*}
\left(\int_0^1 \frac{1-\tau}{\left\|((1-\tau)\Me + \tau f) \, \varpi^2 \right\|_{L^{\frac{r}{2-r}}}}   \, \dd \tau \right)^{-1} &\leq \left(\int_0^1 \frac{1-\tau}{ (1-\tau)\left\|\Me \, \varpi^2 \right\|_{L^{\frac{r}{2-r}}} + \tau \left\|f \, \varpi^2 \right\|_{L^{\frac{r}{2-r}}}} \, \dd \tau \right)^{-1} \\
&= \left\|\Me \, \varpi^2 \right\|_{L^{\frac{r}{2-r}}} \, \Lambda \left( \frac{\left\|f \, \varpi^2 \right\|_{L^{\frac{r}{2-r}}}}{\left\|\Me \, \varpi^2 \right\|_{L^{\frac{r}{2-r}}}}\right),
\end{align*}
where $\Lambda$ is defined in \eqref{eqdef:biglambda}. Indeed, this last equality comes from the Taylor expansion of $\lambda \mapsto \lambda \log \lambda - \lambda$ around $1$,
$$
\lambda \log \lambda - \lambda = -1 + (\lambda-1) \, \log 1 + (\lambda-1)^2 \int_0^1 \frac{1-\tau}{ (1-\tau) + \tau \lambda} \, \dd \tau,
$$
that is
\begin{align*}
\int_0^1 \frac{1-\tau}{ (1-\tau) + \tau \lambda} \, \dd \tau &= \begin{cases}  
    \qquad \quad \frac12 &\text{ if } \lambda = 1,\\
    \displaystyle \frac{\lambda \log \lambda - \lambda + 1}{(\lambda-1)^2} \qquad &\text{ if } \lambda \in \R_+ \setminus\{1\}
    \end{cases} \\
&= \frac{1}{\Lambda(\lambda)}.
\end{align*}
\end{proof}

\bigskip

\noindent Finally, as a corollary to Proposition~\ref{theorem:CK} come the standard Csiszár-Kullback-Pinsker inequalities.

\begin{corollary} \textbf{Standard Csiszár-Kullback-Pinsker inequalities. } \label{corollary:CK}

\medskip

\noindent For any real number $x \in \R$, we denote in the following $x_+ = \max(0,x)$.

\medskip

$\bullet$ \textbf{Standard Classical CKP inequalities.} For any $\alpha \geq 0$ and $0\leq f \in L^1_2(\R^3) \cap L \log L (\R^3) \setminus \{0\}$,
\begin{alignat}{3}
&\|f-M \|^2_{L^1} &&\leq 2 \, \left\|M  \right\|_{L^{1}} \,   &&H_{0}[f|M],  \label{eq:usualCKclassical1}\\
&\|(M-f)_+ \|^2_{L^1_{\alpha}} &&\leq 2 \, \left\|M  \right\|_{L^{1}_{2 \alpha}} \,  &&H_{0}[f|M], \label{eq:usualCKclassical2} \\
&\|f-M \|^2_{L^1_2} &&\leq 8 \, \left\|M  \right\|_{L^1_4} \,  &&H_{0}[f|M], \label{eq:usualCKclassical3} 
\end{alignat}
where we denoted $M \equiv M^f$ the Maxwellian distribution associated to $f$.

\medskip

$\bullet$ \textbf{Standard Fermi-Dirac CKP inequalities.} For any $\varepsilon > 0$, $\alpha \geq 0$ and $0\leq f \in L^1_2(\R^3) \setminus \{0\}$ such that $1 - \varepsilon f \geq 0$ and $\gamma > \frac25$,
\begin{alignat}{3}
&\|f-\M \|^2_{L^1} &&\leq 2 \, \left\|\M  \right\|_{L^{1}} \,   &&H_{\varepsilon}[f|\M],  \label{eq:usualCKfermi1}\\
&\|(\M-f)_+ \|^2_{L^1_{\alpha}} &&\leq 2 \, \left\|\M  \right\|_{L^{1}_{2 \alpha}} \,  &&H_{\varepsilon}[f|\M], \label{eq:usualCKfermi2} \\
&\|f-\M \|^2_{L^1_2} &&\leq 8 \, \left\| \M  \right\|_{L^1_4} \,  &&H_{\varepsilon}[f|\M],   \label{eq:usualCKfermi3}
\end{alignat}
where we denoted $\M \equiv \Me$ the $\varepsilon$-Fermi-Dirac distribution associated to $f$.
\end{corollary}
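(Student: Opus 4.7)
The plan is to deduce each of the six inequalities as a direct specialization of the weighted Csiszár--Kullback--Pinsker inequality of Proposition~\ref{theorem:CK} at the exponent $r=1$, choosing the weight $\varpi$ appropriately in each case and invoking the conservation of macroscopic quantities built into the definition of the associated equilibrium. Since the Fermi--Dirac inequalities~\eqref{eq:usualCKfermi1}--\eqref{eq:usualCKfermi3} are proven in exactly the same way as the Classical ones~\eqref{eq:usualCKclassical1}--\eqref{eq:usualCKclassical3} (just replacing $M$ by $\M$ and using~\eqref{eq:CKL1L2fermi} instead of~\eqref{eq:CKL1L2classical}), I only describe the Classical case.

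For~\eqref{eq:usualCKclassical1}, I would take $\varpi \equiv 1$ and $r=1$ in~\eqref{eq:CKL1L2classical}. Since $f$ and $M^f$ share the same density, the ratio appearing in the argument of $\Lambda$ equals $1$, and by definition~\eqref{eqdef:biglambda} we have $\Lambda(1) = 2$; this yields the announced factor $2$.

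For~\eqref{eq:usualCKclassical2}, the natural choice is the weight $\varpi(v) = (1+|v|^2)^{\alpha/2} \, \mathbf{1}_{\{M \geq f\}}(v)$. With this choice $|f-M|\,\varpi = (M-f)_+ (1+|v|^2)^{\alpha/2}$, so the left-hand side of~\eqref{eq:CKL1L2classical} becomes exactly $\|(M-f)_+\|_{L^1_\alpha}^2$. Moreover, on the support of $\varpi$ one has $f \leq M$, hence $\|f \varpi^2\|_{L^1} \leq \|M \varpi^2\|_{L^1} \leq \|M\|_{L^1_{2\alpha}}$, so that the ratio inside $\Lambda$ lies in $[0,1]$. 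Using the bound $\Lambda(\lambda) \leq 2$ on $[0,1]$ (an immediate consequence of~\eqref{ineq:Lbda1}) and the trivial bound $\|M \varpi^2\|_{L^1} \leq \|M\|_{L^1_{2\alpha}}$ then concludes.

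Finally, for~\eqref{eq:usualCKclassical3}, I would exploit the fact that $f$ and $M^f$ share the same mass and energy (see~\eqref{eq:normalizationdef} and the definition of $M^f$), which gives $\int_{\R^3} (f-M)(1+|v|^2)\, \dd v = 0$. This symmetry identity implies $\|(f-M)_+\|_{L^1_2} = \|(M-f)_+\|_{L^1_2}$, hence $\|f-M\|_{L^1_2} = 2 \|(M-f)_+\|_{L^1_2}$, and applying~\eqref{eq:usualCKclassical2} with $\alpha = 2$ produces the factor $4 \times 2 = 8$. No real obstacle is expected here; the verification is essentially bookkeeping, the only point requiring some attention being the moment-matching identity in the Fermi--Dirac case, which however is precisely the defining property of $\Me$ in~\eqref{eqdef:fermidiracequilibrium}.
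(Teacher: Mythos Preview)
Your proposal is correct and follows essentially the same approach as the paper's proof: the paper also specializes Proposition~\ref{theorem:CK} with $r=1$ and the weights $\varpi \equiv 1$ (for the unweighted inequality) and $\varpi(v) = (1+|v|^2)^{\alpha/2}\,\mathbf{1}_{f \leq \M}$ (for the one-sided weighted inequality), then derives the $L^1_2$ bound from the identity $\|f-\M\|_{L^1_2} = 2\|(\M-f)_+\|_{L^1_2}$ obtained via moment matching. The only cosmetic difference is that the paper presents the Fermi--Dirac case and treats the Classical one as the limit $\varepsilon \to 0$, while you do the reverse.
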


\begin{proof}  
We only prove \eqref{eq:usualCKfermi1}--\eqref{eq:usualCKfermi3} as \eqref{eq:usualCKclassical1}--\eqref{eq:usualCKclassical3} can be seen as a limit case of the previous ones when $\varepsilon \to 0$.

\medskip

$\bullet$ \emph{Proof of  \eqref{eq:usualCKfermi1}.} Applying Proposition~\ref{theorem:CK}, specifically Equation \eqref{eq:CKL1L2fermi}, with $r = 1$ and $\varpi = 1$, we obtain
$$
\|f- \M \|^2_{L^1} \leq \left\|\M  \right\|_{L^{1}} \, \Lambda \left( \frac{\left\|f  \right\|_{L^{1}}}{\left\|\M  \right\|_{L^{1}}} \right)  H_{\varepsilon}[f|\M].
$$
Since $\left\|f  \right\|_{L^{1}} = \left\|M  \right\|_{L^{1}}$ and $\Lambda(1) = 2$, we obtain \eqref{eq:usualCKfermi1}.

\medskip

$\bullet$ \emph{Proof of \eqref{eq:usualCKfermi2}.} Applying Proposition~\ref{theorem:CK}, specifically Equation \eqref{eq:CKL1L2fermi}, with $\varpi(v) = (1 + |v|^2)^{\alpha /2} \, \mathbf{1}_{f \leq \M}$ and $r = 1$, we obtain
$$
\|(\M-f)_+ \|^2_{L^1_{\alpha}} \leq \|\M \, \mathbf{1}_{f \leq \M} \|_{L^1_{2 \alpha}} \, \Lambda \left( \frac{\|f \, \mathbf{1}_{f \leq \M} \|_{L^1_{2 \alpha}}}{\|\M \, \mathbf{1}_{f \leq \M} \|_{L^1_{2 \alpha}}} \right)  H_{\varepsilon}[f|\M].
$$
Since $\displaystyle \|f \, \mathbf{1}_{f \leq \M} \|_{L^1_{2 \alpha}} \leq \|\M \, \mathbf{1}_{f \leq \M} \|_{L^1_{2 \alpha}}$ and, from \eqref{ineq:Lbda1}, $\Lambda \leq 2$ on $[0,1]$, we obtain \eqref{eq:usualCKfermi2} after upper bounding $\|\M \, \mathbf{1}_{f \leq \M} \|_{L^1_{2 \alpha}}$ by $\|\M \|_{L^1_{2 \alpha}}$.

\medskip

$\bullet$ \emph{Proof of \eqref{eq:usualCKfermi3}.} We remark that $|f-\M| = f-\M + 2 (\M-f)_+$, and
$$
\int_{\R^3} (f-\M)(v) \, (1 + |v|^2) \, \dd v = 0,
$$
so that
$$
\|f-\M\|^2_{L^1_2} = 4 \|(\M-f)_+\|^2_{L^1_2}.
$$
We apply \eqref{eq:usualCKfermi2} with $\alpha = 2$ and obtain \eqref{eq:usualCKfermi3}.
\end{proof}

\appendix

\section{Similar results in the Bose-Einstein case} \label{appendix:boseeinstein}

\noindent An upper-bound inequality similar to \eqref{eq:theorementropiesFD} can also be obtained in the Bose-Einstein case. This latter case formally corresponds to taking $- \varepsilon$ instead of $\varepsilon$ in our formulas. First define for any $x \in \R_+$ and $\varepsilon > 0$,
$$
\varphi^{BE}_{\varepsilon}(x) := \frac{x}{1 + \varepsilon x}, \qquad \Phi_{\varepsilon}^{BE}(x) := \int_0^x \log \varphi^{BE}_{\varepsilon}(y) \, \dd y,
$$
and the Bose-Einstein entropy of $0 \leq f \in L^1_2(\R^3)$:
$$
H^{BE}_{\varepsilon}(f) := \int_{\R^3} \Phi_{\varepsilon}^{BE}(f) \, \dd v.
$$
Lu proved in~\cite{luBE} that, under the condition
\begin{equation} \label{eqassump:TTc}
T \geq \frac{\zeta(\frac52)}{\zeta(\frac32)} \, T_c, \qquad \quad T_c := \frac{1}{2 \pi} \left( \frac{\rho \, \varepsilon}{\zeta(\frac32)} \right)^{2/3},
\end{equation}
where $\zeta$ is the Riemann Zêta function and $T_c$ is called the critical temperature, there exists a unique $\varepsilon$-Bose-Einstein statistics $\M_{\varepsilon}^{BE,f}$ associated to $f$ - that is a distribution such that $\log \varphi^{BE}_{\varepsilon} (\M_{\varepsilon}^{BE,f})$ is a linear combination of $v\mapsto 1$, $v\mapsto v$ and $v\mapsto |v|^2$ and sharing the same normalization in $v\mapsto 1$, $v\mapsto v$ and $v\mapsto |v|^2$ as $f$. We can obtain from Proposition~\ref{prop:general} in Appendix~\ref{appendix:entropystudy} that,  denoting $H^{BE}_{\varepsilon}[f|\M_{\varepsilon}^{BE,f}] := H^{BE}_{\varepsilon}(f) - H^{BE}_{\varepsilon}(\M_{\varepsilon}^{BE,f})$, we have
\begin{align} 
    H^{BE}_{\varepsilon}[f|\M_{\varepsilon}^{BE,f}] &= \int_0^1 (1-\tau) \left( \int_{\R^3} \left(f(v)-\M_{\varepsilon}^{BE,f}(v) \right)^2 \, {\Phi^{BE}_{\varepsilon}}'' \left((1-\tau)\M_{\varepsilon}^{BE,f}(v) + \tau f(v) \right) \, \dd v \right) \dd \tau \nonumber \\
    &= \int_{\R^3} \int_{\M_{\varepsilon}^{BE,f}(v)}^{f(v)}  \frac{f(v) - x}{\varphi^{BE}_{\varepsilon}(x)} \, {\varphi^{BE}_{\varepsilon}}'(x) \, \dd x \, \dd v, \label{eq:firstformulaHepsBE}
\end{align}
where the last equality comes from ${\Phi^{BE}_{\varepsilon}}'' = \frac{{\varphi^{BE}_{\varepsilon}}'}{\varphi^{BE}_{\varepsilon}}$ and the change of variables $x = \M_{\varepsilon}^{BE,f}(v) + \tau (f(v) - \M_{\varepsilon}^{BE,f})$.

\medskip

\noindent In the following Proposition, we provide a link between the relative entropies to equilibrium of the Bose-Einstein and the classical cases. Although I believe that both inequalities could be obtained, we only present here the ``upper-bound'' inequality as its proof is rather short. Further work may allow to obtain the lower-bound inequality, with a constant that probably depends on an $L^{\infty}$ bound on $f$. This constitutes another reason why we did not investigate this other inequality, as, although $L^{\infty}$ bounds are natural to use in the Fermi-Dirac context, due to Pauli's exclusion principle, they are not in the Bose-Einstein one, due to the phenomenon of condensation.

\begin{proposition} \label{prop:BE}
\textbf{Upper-bound in the Bose-Einstein case.} For any $\varepsilon > 0$ and nonnegative $f \in L^1_2(\R^3) \cap L \log L (\R^3) \setminus \{0\}$ which density and temperature satisfy \eqref{eqassump:TTc}, we have
\begin{equation} \label{ineq:BE}
    H_0\left[ \left. \frac{f}{1 + \varepsilon f} \right|M^{\frac{f}{1 + \varepsilon f}} \right] \leq  H^{BE}_{\varepsilon} \left[f|\M_{\varepsilon}^{BE,f} \right].
\end{equation}
\end{proposition}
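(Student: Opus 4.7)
The plan is to adapt the proof of Theorem~\ref{theorem:FD} (specifically the lower-bound direction), exploiting the fact that the Bose--Einstein setting corresponds, at the formal level, to replacing $\varepsilon$ by $-\varepsilon$ throughout. Setting $g := f/(1+\varepsilon f) = \varphi^{BE}_{\varepsilon}(f)$, one has $g \leq f$, so $g \in L^1_2(\R^3) \cap L\log L(\R^3)\setminus\{0\}$, and for $\varepsilon_{*}$ varying in $[0,\varepsilon]$ I would introduce the auxiliary function
\begin{equation*}
R^{BE}_g(\varepsilon_{*}) := H^{BE}_{\varepsilon_{*}}\!\left[(\varphi^{BE}_{\varepsilon_{*}})^{-1}(g)\,\Big|\,\M_{\varepsilon_{*}}^{BE,\,(\varphi^{BE}_{\varepsilon_{*}})^{-1}(g)}\right],
\end{equation*}
whose endpoint values are $R^{BE}_g(0) = H_0[g|M^g]$ and $R^{BE}_g(\varepsilon) = H^{BE}_{\varepsilon}[f|\M_{\varepsilon}^{BE,f}]$, so that~\eqref{ineq:BE} reduces to $R^{BE}_g(0) \leq R^{BE}_g(\varepsilon)$. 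The inversion $(\varphi^{BE}_{\varepsilon_{*}})^{-1}(g) = g/(1-\varepsilon_{*}g)$ is automatic as $\varepsilon_{*} g \leq \varepsilon g < 1$; well-posedness of the associated Bose--Einstein equilibrium via Lu's condition~\eqref{eqassump:TTc} along the path should follow from a continuity/monotonicity argument starting from the facts that it holds trivially at $\varepsilon_{*}=0$ and by hypothesis at $\varepsilon_{*}=\varepsilon$.

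The bulk of the work is then a one-to-one transposition of Proposition~\ref{prop:rgnonincr}. Starting from the representation~\eqref{eq:firstformulaHepsBE} and performing the change of variables $y = \varphi^{BE}_{\varepsilon_{*}}(x)$ in the inner integral, I would differentiate under the integral sign (justified by a dominated-convergence argument mirroring the one around~\eqref{eqproofsupK}, which requires a BE analog of Lemma~\ref{lem:bdMeps}) to obtain
\begin{equation*}
(R^{BE}_g)'(\varepsilon_{*}) = \int_{\R^3} \int_{M^{BE}_{\varepsilon_{*}}(v)}^{g(v)} \frac{\partial_{\varepsilon_{*}}(\varphi^{BE}_{\varepsilon_{*}})^{-1}(g(v)) - \partial_{\varepsilon_{*}}(\varphi^{BE}_{\varepsilon_{*}})^{-1}(y)}{y}\,\dd y\,\dd v,
\end{equation*}
where $M^{BE}_{\varepsilon_{*}} := \varphi^{BE}_{\varepsilon_{*}}(\M_{\varepsilon_{*}}^{BE,\,(\varphi^{BE}_{\varepsilon_{*}})^{-1}(g)})$. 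The ``moving equilibrium'' boundary term cancels exactly as in the Fermi--Dirac proof, because $\log\varphi^{BE}_{\varepsilon_{*}}(\M_{\varepsilon_{*}}^{BE,\cdot})$ is a linear combination of $1$, $v$, $|v|^2$ and these moments are shared by $(\varphi^{BE}_{\varepsilon_{*}})^{-1}(g)$ and $\M_{\varepsilon_{*}}^{BE,\cdot}$ by construction. The decisive sign flip with respect to the Fermi--Dirac case now enters through the direct computation
\begin{equation*}
\partial_{\varepsilon_{*}}(\varphi^{BE}_{\varepsilon_{*}})^{-1}(x) = \partial_{\varepsilon_{*}}\!\left(\frac{x}{1-\varepsilon_{*}x}\right) = ((\varphi^{BE}_{\varepsilon_{*}})^{-1}(x))^{2},
\end{equation*}
which, unlike~\eqref{eq:derivphieps}, is \emph{increasing} in $x$ on its admissible domain. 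Consequently the inner integral in $y$ is nonnegative regardless of the relative order of $M^{BE}_{\varepsilon_{*}}(v)$ and $g(v)$, and $(R^{BE}_g)'(\varepsilon_{*}) \geq 0$ throughout $(0,\varepsilon]$, i.e.\ $R^{BE}_g$ is nondecreasing.

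The final step is continuity of $R^{BE}_g$ at $\varepsilon_{*}=0$, obtained by a dominated-convergence argument exactly parallel to Lemma~\ref{lemma:continuityatzero}: both the $H^{BE}_{\varepsilon_{*}}$-entropy of $(\varphi^{BE}_{\varepsilon_{*}})^{-1}(g)$ and that of its Bose--Einstein equilibrium converge as $\varepsilon_{*}\to 0$ to their classical counterparts $H_0(g)$ and $H_0(M^g)$, using $g \in L\log L$ for the first term and the uniform Gaussian control of $M^{BE}_{\varepsilon_{*}}$ for the second. Combining monotonicity with continuity at zero yields $R^{BE}_g(0) \leq R^{BE}_g(\varepsilon)$, which is exactly~\eqref{ineq:BE}. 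The main obstacle I anticipate is precisely the Bose--Einstein analog of Lemma~\ref{lem:bdMeps}: a uniform Gaussian decay estimate on $M^{BE}_{\varepsilon_{*}}$ for $\varepsilon_{*}$ in a compact subset of $[0,\varepsilon]$, needed both for the domination in the differentiation under the integral and for the continuity at zero; the strict inequality side of~\eqref{eqassump:TTc} should supply the required non-degeneracy, but care is needed as the BE situation is genuinely more delicate than the FD one due to the proximity of the condensation threshold.
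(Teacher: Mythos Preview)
Your approach is the natural transposition of the Fermi--Dirac argument in Theorem~\ref{theorem:FD}, and the monotonicity step is correct in spirit: the sign flip in $\partial_{\varepsilon_*}(\varphi^{BE}_{\varepsilon_*})^{-1}$ indeed makes $R^{BE}_g$ nondecreasing, and the boundary-term cancellation goes through verbatim. However, you have correctly identified that this route requires a Bose--Einstein analog of Lemma~\ref{lem:bdMeps} (uniform Gaussian control of $M^{BE}_{\varepsilon_*}$ over $\varepsilon_*\in[0,\varepsilon]$), a BE version of Lemma~\ref{lemma:derivMeps} (smooth dependence of the BE coefficients), and a check that Lu's condition~\eqref{eqassump:TTc} holds for $(\varphi^{BE}_{\varepsilon_*})^{-1}(g)$ along the whole path. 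None of these is available in the paper, and the first in particular is delicate near the condensation threshold, as you note.

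The paper sidesteps all of this with a much shorter direct argument, working only at the fixed value $\varepsilon$. It starts from the representation~\eqref{eq:firstformulaHepsBE} and uses the elementary contraction inequality $|y-z|\geq|\varphi^{BE}_\varepsilon(y)-\varphi^{BE}_\varepsilon(z)|$ to replace $f-x$ by $\varphi^{BE}_\varepsilon(f)-\varphi^{BE}_\varepsilon(x)$ in the integrand; after the change of variables $y=\varphi^{BE}_\varepsilon(x)$ one obtains the classical integral representation~\eqref{eq:firstformulaH01} but with the ``wrong'' lower limit $\varphi^{BE}_\varepsilon(\M_\varepsilon^{BE,f})$ instead of $M^{\varphi^{BE}_\varepsilon(f)}$. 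The discrepancy term is then shown to be nonnegative by the observation that $\log M^{\varphi^{BE}_\varepsilon(f)}-\log\varphi^{BE}_\varepsilon(\M_\varepsilon^{BE,f})$ is a linear combination of conserved quantities, which allows one to swap $\varphi^{BE}_\varepsilon(f)$ for $M^{\varphi^{BE}_\varepsilon(f)}$ in that term and recognise a nonnegative integral. This proof needs no path in $\varepsilon_*$, no differentiation under the integral, no regularity of the BE equilibrium coefficients, and no uniform bounds on $M^{BE}_{\varepsilon_*}$; it only uses the single equilibrium $\M_\varepsilon^{BE,f}$, whose existence is assumed. Your method would ultimately yield the same inequality, but at the cost of building a fair amount of BE-specific machinery that the paper deliberately avoids.
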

\begin{proof}
The starting point of the proof are Equation \eqref{eq:firstformulaHepsBE} and the inequality $|y-z| \geq |\varphi^{BE}_{\varepsilon}(y) - \varphi^{BE}_{\varepsilon}(z)|$ for all $(y,z) \in \R_+^2$, yielding
$$
H^{BE}_{\varepsilon} \left[f|\M_{\varepsilon}^{BE,f} \right] =  \int_{\R^3} \int_{\M_{\varepsilon}^{BE,f}}^f  \frac{f - x}{\varphi^{BE}_{\varepsilon}(x)} \, {\varphi^{BE}_{\varepsilon}}'(x) \, \dd x \, \dd v \geq  \int_{\R^3} \int_{\M_{\varepsilon}^{BE,f}}^f  \frac{\varphi^{BE}_{\varepsilon}(f) - \varphi^{BE}_{\varepsilon}(x)}{\varphi^{BE}_{\varepsilon}(x)} \, {\varphi^{BE}_{\varepsilon}}'(x) \, \dd x \, \dd v.
 $$
 Applying the change of variables $y = \varphi^{BE}_{\varepsilon}(x)$ and using formula \eqref{eq:firstformulaH01}, we obtain
 $$
H^{BE}_{\varepsilon} \left[f|\M_{\varepsilon}^{BE,f} \right] \geq  H_0\left[ \varphi^{BE}_{\varepsilon}(f)|M^{\varphi^{BE}_{\varepsilon}(f)} \right] + \int_{\R^3} \int_{\varphi^{BE}_{\varepsilon}(\M_{\varepsilon}^{BE,f}) }^{M^{\varphi^{BE}_{\varepsilon}(f)} } \frac{\varphi^{BE}_{\varepsilon}(f)  - y}{y}  \, \dd y \, \dd v.
$$  
Remark that, as $\log M^{\varphi^{BE}_{\varepsilon}(f)} - \log \varphi^{BE}_{\varepsilon}(\M_{\varepsilon}^{BE,f})$ is a linear combination of conserved quantities (namely, $v \mapsto 1$, $v \mapsto v$ and $v \mapsto |v|^2$), we have by definition of $M^{\varphi^{BE}_{\varepsilon}(f)}$ that
$$
\int_{\R^3} \varphi^{BE}_{\varepsilon}(f) \, \log \left( \frac{M^{\varphi^{BE}_{\varepsilon}(f)}}{\varphi^{BE}_{\varepsilon}(\M_{\varepsilon}^{BE,f})}\right) \dd v = \int_{\R^3} M^{\varphi^{BE}_{\varepsilon}(f)} \, \log \left( \frac{M^{\varphi^{BE}_{\varepsilon}(f)}}{\varphi^{BE}_{\varepsilon}(\M_{\varepsilon}^{BE,f})}\right) \dd v,
$$  
so that
$$
\int_{\R^3} \int_{\varphi^{BE}_{\varepsilon}(\M_{\varepsilon}^{BE,f}) }^{M^{\varphi^{BE}_{\varepsilon}(f)} } \frac{\varphi^{BE}_{\varepsilon}(f)  - y}{y}  \, \dd y \, \dd v = \int_{\R^3} \int_{\varphi^{BE}_{\varepsilon}(\M_{\varepsilon}^{BE,f}) }^{M^{\varphi^{BE}_{\varepsilon}(f)} } \frac{M^{\varphi^{BE}_{\varepsilon}(f)}  - y}{y}  \, \dd y \, \dd v \geq 0,
$$
ending the proof.
\end{proof}

\begin{proposition} \label{prop:CKBE}
 \textbf{Bose-Einstein CKP inequality.} Let $\varpi : \R^3 \to \R_+$ be measurable, and $r \in [1,2]$. We recall the definition~\eqref{eqdef:biglambda} of the function $\Lambda$,
\begin{equation*} 
    \Lambda(\lambda) := \begin{cases}  
    \qquad \quad 2 &\text{ if } \lambda = 1,\\
    \displaystyle \frac{(\lambda-1)^2}{\lambda \log \lambda - \lambda + 1} \qquad &\text{ if } \lambda \in \R_+ \setminus\{1\}.
    \end{cases}
\end{equation*} 
Then for any $\varepsilon > 0$ and $0\leq f \in L^1_2(\R^3) \cap L \log L (\R^3)  \setminus \{0\}$ which density and temperature satisfy \eqref{eqassump:TTc}, assuming that the norms below are finite,
\begin{equation} \label{eq:CKL1L2be}
\left\| \left(f-\M \right) \, \varpi \right\|^2_{L^r} \leq \left\|\M(1 + \varepsilon \M) \, \varpi^2 \right\|_{L^{\frac{r}{2-r}}} \Lambda \left(\frac{\left\|f(1 + \varepsilon f) \, \varpi^2 \right\|_{L^{\frac{r}{2-r}}}}{\left\|\M(1 + \varepsilon \M) \, \varpi^2 \right\|_{L^{\frac{r}{2-r}}}} \right)  H^{BE}_{\varepsilon} \left[ f \left|\M \right. \right],
\end{equation}
where we denoted $\M \equiv \M_{\varepsilon}^{BE,f}$ the $\varepsilon$-Bose-Einstein distribution associated to $f$, and $H^{BE}_{\varepsilon}$ is the Bose-Einstein entropy. When $r=2$, $L^{\frac{r}{2-r}}$ shall be understood as $L^{\infty}$.
\end{proposition}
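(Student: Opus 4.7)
The plan is to mirror the proof of Proposition~\ref{theorem:CK} in the Bose-Einstein case, applying the same general inequality of Corollary~\ref{cor:realCKgeneral} with $\Phi = \Phi_{\varepsilon}^{BE}$ and the family
$$
\F = \left\{0 \leq g \in L^1_2(\R^3) \;\Big|\; \int_{\R^3} g(v)\begin{pmatrix} 1\\ v \\ |v|^2 \end{pmatrix}\,\dd v = \begin{pmatrix} \rho \\ \rho u \\ 3 \rho T + \rho |u|^2 \end{pmatrix}\right\},
$$
where $(\rho, u, T)$ are the macroscopic quantities of $f$ and satisfy assumption~\eqref{eqassump:TTc} so that the equilibrium $\M \equiv \M^{BE,f}_{\varepsilon}$ exists. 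The key computation is
$$
{\Phi_{\varepsilon}^{BE}}''(x) = \frac{{\varphi_{\varepsilon}^{BE}}'(x)}{\varphi_{\varepsilon}^{BE}(x)} = \frac{1}{x(1+\varepsilon x)},
$$
so that $\displaystyle \frac{1}{{\Phi_{\varepsilon}^{BE}}''(x)} = x(1+\varepsilon x)$. Unlike the Fermi-Dirac case, this quantity is not bounded above by $x$, but the crucial observation is that the map $x \mapsto x(1+\varepsilon x) = x + \varepsilon x^2$ is \emph{convex}.

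With this at hand, Corollary~\ref{cor:realCKgeneral} yields
$$
\left\|(f-\M)\,\varpi\right\|_{L^r}^2 \leq \left( \int_0^1 (1-\tau) \left\| \bigl((1-\tau)\M + \tau f\bigr)\bigl(1+\varepsilon((1-\tau)\M + \tau f)\bigr)\varpi^2 \right\|_{L^{\frac{r}{2-r}}}^{-1} \dd \tau \right)^{-1} H_{\varepsilon}^{BE}[f|\M].
$$
By convexity of $x \mapsto x(1+\varepsilon x)$, the integrand inside the norm is pointwise dominated by $(1-\tau)\M(1+\varepsilon \M)\varpi^2 + \tau f(1+\varepsilon f)\varpi^2$. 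Applying Minkowski's inequality (which still makes sense when $r=2$ with $L^\infty$) we obtain
$$
\left\| \bigl((1-\tau)\M + \tau f\bigr)\bigl(1+\varepsilon((1-\tau)\M + \tau f)\bigr)\varpi^2 \right\|_{L^{\frac{r}{2-r}}} \leq (1-\tau)\,A + \tau\,B,
$$
where $A = \|\M(1+\varepsilon\M)\varpi^2\|_{L^{r/(2-r)}}$ and $B = \|f(1+\varepsilon f)\varpi^2\|_{L^{r/(2-r)}}$. Taking reciprocals reverses the inequality, and the resulting lower bound on the $\tau$-integral becomes an upper bound once inverted.

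It then remains to evaluate the explicit integral $\int_0^1 (1-\tau)\bigl((1-\tau)A + \tau B\bigr)^{-1}\dd\tau$, exactly as in the proof of Proposition~\ref{theorem:CK}: using the Taylor expansion of $\lambda \mapsto \lambda \log \lambda - \lambda$ around $\lambda = 1$, one finds this integral equals $1/(A\,\Lambda(B/A))$, so that the outer inverse produces $A\,\Lambda(B/A)$, which is precisely the right-hand side of~\eqref{eq:CKL1L2be}. The main (and essentially only) obstacle here, compared to the Fermi-Dirac case, is that the expression $\frac{1}{{\Phi_\varepsilon^{BE}}''(x)}$ depends quadratically rather than linearly on $x$, so one cannot simply dominate it by a linear function; convexity of $x + \varepsilon x^2$ is the replacement that keeps Minkowski's inequality usable and ultimately yields the same $\Lambda$-type constant as in the Fermi-Dirac and classical cases.
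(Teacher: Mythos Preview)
Your proposal is correct and follows essentially the same route as the paper's proof: apply Corollary~\ref{cor:realCKgeneral} with $\Phi=\Phi_{\varepsilon}^{BE}$, use the convexity of $x\mapsto x(1+\varepsilon x)=1/{\Phi_{\varepsilon}^{BE}}''(x)$ to pass to the convex combination $(1-\tau)\M(1+\varepsilon\M)+\tau f(1+\varepsilon f)$, then Minkowski and the Taylor-expansion identity for $\Lambda$. The only cosmetic difference is that the paper absorbs the convexity step directly into the first displayed inequality rather than writing it out separately.
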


\begin{proof}
The proof is similar to the one of Proposition~\ref{theorem:CK}. Let $\varepsilon > 0$.

\medskip

\noindent We apply Corollary~\ref{cor:realCKgeneral} with $\Phi(x) \equiv \Phi^{BE}_{\varepsilon}(x) = \int_0^x \log \frac{y}{1 + \varepsilon y} \, \dd y$, $J = \R_+^*$ and
$$
\F = \left\{0 \leq g \in L^1_2(\R^3) \left| \;   \int_{\R^3} g(v) \begin{pmatrix}
1 \\ v \\ |v|^2
\end{pmatrix} \dd v = \begin{pmatrix}
\rho \\ \rho \, u \\ 3 \, \rho T + \rho \, |u|^2
\end{pmatrix}\right. \right\},
$$
with  $\rho, T$ and $\varepsilon$ such that \eqref{eqassump:TTc} is satisfied, ensuring the existence of $\M_{\varepsilon}^{BE,f}$. In the rest of this proof, we denote for the sake of clarity $\M \equiv \M_{\varepsilon}^{BE,f}$. Then \eqref{eq:realCKgeneral} writes, since $H_{\Phi^{BE}_{\varepsilon}} \equiv H^{BE}_{\varepsilon}$ the $\varepsilon$-Bose-Einstein entropy, and $\displaystyle \frac{1}{{\Phi^{BE}_{\varepsilon}}''}(x) = x(1+\varepsilon x)$ is convex,
$$
\left\| (f-\M) \, \varpi \right\|_{L^r}^2 \leq \left(\int_0^1 (1-\tau) \, \left\|((1-\tau)\M(1 + \varepsilon \M) + \tau f(1 + \varepsilon f)) \, \varpi^2 \right\|_{L^{\frac{r}{2-r}}}^{-1} \, \dd \tau \right)^{-1} \, H^{BE}_{\varepsilon}[f|\M].
$$
We focus on the term with the integral in $\tau$. From Minkowski's inequality, it is smaller than
\begin{align*}
&\left(\int_0^1 \frac{1-\tau}{ (1-\tau)\left\|\M(1 + \varepsilon \M) \, \varpi^2 \right\|_{L^{\frac{r}{2-r}}} + \tau \left\|f(1 + \varepsilon f) \, \varpi^2 \right\|_{L^{\frac{r}{2-r}}}} \, \dd \tau \right)^{-1} \\
= \; &\left\|\M(1 + \varepsilon \M) \, \varpi^2 \right\|_{L^{\frac{r}{2-r}}} \, \Lambda \left( \frac{\left\|f(1 + \varepsilon f) \, \varpi^2 \right\|_{L^{\frac{r}{2-r}}}}{\left\|\M(1 + \varepsilon \M) \, \varpi^2 \right\|_{L^{\frac{r}{2-r}}}}\right),
\end{align*}
where $\Lambda$ is defined in~\eqref{eqdef:biglambda} and appears thanks to a Taylor expansion of $\lambda \mapsto \lambda \log \lambda - \lambda$ around 1 like in the proof of Proposition~\ref{theorem:CK}, allowing to conclude.
\end{proof}

\noindent From the previous proposition, we easily deduce the following standard inequalities.
\begin{corollary}
\textbf{Standard Bose-Einstein CKP inequalities.} For any $\varepsilon > 0$, $\alpha \geq 0$ and $0\leq f \in L^1_2(\R^3) \cap L \log L (\R^3)  \setminus \{0\}$ satisfying \eqref{eqassump:TTc},
\begin{alignat}{3} 
&\|(\M-f)_+ \|^2_{L^1_{\alpha}} &&\leq 2 \left( \left\|\M  \right\|_{L^{1}_{2 \alpha}} + \varepsilon  \left\|\M  \right\|^2_{L^{2}_{\alpha}} \right) \,  &&H^{BE}_{\varepsilon}[f|\M], \label{eq:usualCKbe1} \\
&\|f-\M \|^2_{L^1} &&\leq 8 \, \left(\left\|\M  \right\|_{L^{1}} + \varepsilon  \|\M\|_{L^2}^2 \right)   &&H^{BE}_{\varepsilon}[f|\M],  \label{eq:usualCKbe2} \\
&\|f-\M \|^2_{L^1_2} &&\leq 8 \, \left( \left\|\M  \right\|_{L^{1}_{4}} + \varepsilon  \left\|\M  \right\|^2_{L^{2}_{2}} \right) \,  &&H^{BE}_{\varepsilon}[f|\M],   \label{eq:usualCKbe3}
\end{alignat}
where we denoted for clarity $\M\equiv \M_{\varepsilon}^{BE,f}$ the $\varepsilon$-Bose-Einstein distribution associated to $f$.
\end{corollary}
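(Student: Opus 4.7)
The plan is to mirror the proof of Corollary~\ref{corollary:CK} (the Fermi-Dirac analogue) almost line by line, substituting Proposition~\ref{prop:CKBE} in place of Proposition~\ref{theorem:CK}. The only qualitative difference is that the weight appearing in \eqref{eq:CKL1L2be} is $\M(1+\varepsilon\M)$ instead of $\M$, which splits as $\M+\varepsilon\M^2$ and accounts for the additive term $\varepsilon\|\M\|^2_{L^2_{\cdot}}$ on the right-hand side of each of \eqref{eq:usualCKbe1}--\eqref{eq:usualCKbe3}.

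For \eqref{eq:usualCKbe1}, I would apply \eqref{eq:CKL1L2be} with $r=1$ (so that $L^{\frac{r}{2-r}}=L^1$) and $\varpi(v)=(1+|v|^2)^{\alpha/2}\,\mathbf{1}_{f\leq\M}$. The left-hand side then reads $\|(\M-f)_+\|^2_{L^1_{\alpha}}$. Since the function $x\mapsto x(1+\varepsilon x)$ is nondecreasing on $\R_+$, on the set $\{f\leq\M\}$ one has $f(1+\varepsilon f)\leq \M(1+\varepsilon \M)$, so the argument of $\Lambda$ in \eqref{eq:CKL1L2be} lies in $[0,1]$, whence by~\eqref{ineq:Lbda1} $\Lambda\leq 2$ there. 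Finally, using $\M(1+\varepsilon\M)=\M+\varepsilon\M^2$ and dropping the indicator function gives the bound
\[
\|\M(1+\varepsilon\M)\,\varpi^2\|_{L^1}\;\leq\;\|\M\|_{L^1_{2\alpha}}+\varepsilon\,\|\M\|_{L^2_{\alpha}}^2,
\]
which produces~\eqref{eq:usualCKbe1}.

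For \eqref{eq:usualCKbe2} and \eqref{eq:usualCKbe3}, I would use the same trick as in the proof of~\eqref{eq:usualCKfermi3}: write $|f-\M|=(f-\M)+2(\M-f)_+$ and integrate against $1$, respectively $(1+|v|^2)$. Because $f$ and $\M\equiv\M_{\varepsilon}^{BE,f}$ share their mass, momentum and energy (this is part of the definition of $\M_{\varepsilon}^{BE,f}$), we have $\int(f-\M)\,\mathrm{d}v=0$ and $\int(f-\M)|v|^2\,\mathrm{d}v=0$, so that
\[
\|f-\M\|_{L^1}=2\|(\M-f)_+\|_{L^1},\qquad \|f-\M\|_{L^1_2}=2\|(\M-f)_+\|_{L^1_2}.
\]
Squaring these identities and plugging in~\eqref{eq:usualCKbe1} with $\alpha=0$ and $\alpha=2$ respectively produces the factor $8$ in~\eqref{eq:usualCKbe2} and~\eqref{eq:usualCKbe3}.

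There is essentially no obstacle here: the whole argument is a short consequence of three elementary facts, namely (i) Proposition~\ref{prop:CKBE}, (ii) the monotonicity of $x\mapsto x(1+\varepsilon x)$, which lets us conclude that the ratio inside $\Lambda$ is $\leq 1$ when restricted to $\{f\leq\M\}$, and (iii) the shared normalization of $f$ and $\M$, which turns an $L^1$ (or $L^1_2$) norm of $f-\M$ into twice the $L^1$ (or $L^1_2$) norm of its negative part. The only mildly subtle point is recognising that the weight $\varpi$ must carry the indicator $\mathbf{1}_{f\leq\M}$ in order to exploit the monotonicity of $x\mapsto x(1+\varepsilon x)$ and hence stay in the region where $\Lambda\leq 2$.
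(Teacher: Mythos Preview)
Your proposal is correct and follows essentially the same approach as the paper's own proof: apply Proposition~\ref{prop:CKBE} with $r=1$ and $\varpi(v)=(1+|v|^2)^{\alpha/2}\mathbf{1}_{f\leq\M}$, use $\Lambda\leq 2$ on $[0,1]$ to get~\eqref{eq:usualCKbe1}, then deduce~\eqref{eq:usualCKbe2}--\eqref{eq:usualCKbe3} via the decomposition $|f-\M|=(f-\M)+2(\M-f)_+$ and the shared moments of $f$ and $\M$. Your justification that the ratio inside $\Lambda$ lies in $[0,1]$ via the monotonicity of $x\mapsto x(1+\varepsilon x)$ is actually a bit more explicit than the paper's sketch.
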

\noindent We prove the above inequalities similarly as we did for Corollary~\ref{corollary:CK}. For~\eqref{eq:usualCKbe1} we apply Proposition~\ref{prop:CKBE} with $r = 1$ and $\varpi(v) = (1+|v|^2)^{\frac{\alpha}{2}} \, \mathbf{1}_{f \leq \M}$ and notice that $\Lambda \leq 2$ on $[0,1]$. We then obtain~\eqref{eq:usualCKbe2}--\eqref{eq:usualCKbe3} by decomposing $|f-\M| = f - \M + 2(\M-f)_+$, using the fact that $f$ and $\M$ share the same normalization in $v\mapsto 1$, $v\mapsto v$ and $v\mapsto |v|^2$, and~\eqref{eq:usualCKbe1} with respectively $\alpha = 0$ and $\alpha=2$.

\section{A general discussion about entropies and equilibria} \label{appendix:entropystudy}

\noindent In this section we intend to provide general considerations on the entropy, which are much more general than the scope of this paper, but give a good understanding of the notions we used, and could also be helpful in the study of weak turbulence, where various kinds of unusual entropies can emerge (see~\cite{bredendesvillettes}). Our setting is laid down quite generally. Consider a measured space $(\E,\A,\mu)$, an open interval $J \subset \R$ which closure we denote by $\Bar{J}$, and a function $\Phi \in \mathcal{C}^2(J) \cap \mathcal{C}^0(\Bar{J})$ such that $\Phi'' > 0$ on $J$. Remark that $\Phi'$ is then a $\mathcal{C}^1$-diffeomorphism from $J$ onto $\Phi'(J)$.

\medskip

\noindent \textbf{Entropy.} We define the $\Phi$-entropy, for any ($\A$, Bor$(\Bar{J})$)-measurable $f : \E \to \Bar{J}$ such that the following integral makes sense and is finite, by
\begin{equation} \label{eqdef:philantrop}
H_{\Phi}(f) := \int_{\E} \Phi(f(\zeta)) \, \dd \mu(\zeta),
\end{equation}
and we denote by $E_{\Phi}$ the set of such $f$. We let the relative $\Phi$-entropy of $f$ and $g$ to be
\begin{equation} \label{eqdef:relatphilantrop}
H_{\Phi}[f|g] = H_{\Phi}(f) - H_{\Phi}(g).
\end{equation}
We also define, for ($\A$,~Bor$(\Bar{J})$)-measurable $f,g : \E \to \Bar{J}$, the $\Phi$-relative-entropy (which in general differs from the relative $\Phi$-entropy) of $f$ and $g$ by
\begin{equation} \label{eq:phirelatentrop}
\HH_{\Phi}[f|g] := \int_0^1 (1-\tau) \left( \int_{g \in J} (f-g)^2 \, \Phi''((1-\tau)g + \tau f) \, \dd \mu(\zeta) \right) \dd \tau.
\end{equation}
Note that $\HH_{\Phi}[f|g]$ is possibly infinite, but always well-defined, as
$$
g \in J \implies \; \forall \, \tau \in (0,1), \; \; (1-\tau)g + \tau f \in J,
$$
and that $\HH_{\Phi}[f|g]$ is always nonnegative. The following Proposition~\ref{prop:general} gives a quite simple but general result linking entropies, conserved quantities and equilibrium distributions, under a sole existence assumption.

\begin{proposition} \label{prop:general}
Let $I$ be a countable set, $(\phi_i)_{i \in I}$ a family of measurable real functions, $(\omega_i)_{i \in I}$ a family of real numbers, and
$$
\F = \left\{f \in E_{\Phi} \text{ s.t. } \forall \, i \in I, \; \; \int_{\E} |f(\zeta)| \, |\phi_i(\zeta)| \, \dd \mu(\zeta) < \infty \text{ and } \int_{\E} f(\zeta) \, \phi_i(\zeta) \, \dd \mu(\zeta) = \omega_i \right\}.
$$
Assume that $\Phi'(J) = \R$, and that there exists $(\alpha_i(\omega)) \in \R^I$ such that    $M^{\F}_{\Phi} \in \F$, where
\begin{equation} \label{eq:generalequilibrium}
    M^{\F}_{\Phi} := (\Phi')^{-1} \left(\sum_{i \in I}\alpha_i(\omega) \, \phi_i \right). 
\end{equation}
Then the following four propositions are equivalent. Let $g \in \F$.
\begin{align*}
   (i) \; &g \in J \quad \mu\text{-a.e.} \; \text{ and } \; \;   \forall \, f \in \F, \;  H_{\Phi}[f|g] = \HH_{\Phi}[f|g], \\
   (ii) \; &g \in J \quad \mu\text{-a.e.} \; \text{ and } \; \;  \forall \, f \in \F, \; \; \HH_{\Phi}[f|g] < \infty \; \text{ and } \; \;   \int_{\E} (f-g) \, \Phi'(g) \, \dd \mu (\zeta) = 0, \\
   (iii) \; &H_{\Phi}(g) = \underset{h \in \F}{\min} \;  H_{\Phi}(h), \\
   (iv) \; &g = M^{\F}_{\Phi} \quad \mu \text{-a.e.}
\end{align*} 
In particular, $M^{\F}_{\Phi}$ is the unique minimizer of $H_{\Phi}$ under the constraints of the set $\F$.

\end{proposition}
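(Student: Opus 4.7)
The workhorse is a second-order Taylor expansion of $\Phi$ with integral remainder. For $g(\zeta) \in J$ and $f(\zeta) \in \Bar{J}$, the segment $(1-\tau) g + \tau f$ stays in $J$ for $\tau \in [0,1)$, and one has the pointwise identity
\begin{equation*}
\Phi(f) - \Phi(g) \;=\; (f - g) \, \Phi'(g) \;+\; (f - g)^2 \int_0^1 (1-\tau) \, \Phi''\bigl((1-\tau) g + \tau f\bigr) \, \dd \tau.
\end{equation*}
Assuming $g \in J$ $\mu$-a.e., integrating against $\mu$ (using Fubini on the nonnegative remainder to swap $\dd \tau$ and $\dd \mu$) yields the three-term identity
\begin{equation*}
H_{\Phi}(f) - H_{\Phi}(g) \;=\; \int_{\E} (f - g) \, \Phi'(g) \, \dd \mu \;+\; \HH_{\Phi}[f|g].
\end{equation*}
Since $\Phi(f)$ and $\Phi(g)$ are $\mu$-integrable whenever $f, g \in E_{\Phi}$, and the remainder is pointwise nonnegative (because $\Phi'' > 0$ on $J$), the integrability of $(f-g) \, \Phi'(g)$ and the finiteness of $\HH_{\Phi}[f|g]$ are equivalent, with the displayed identity holding in that common case.

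Armed with this identity, I would close the cycle $(iv) \Rightarrow (ii) \Leftrightarrow (i) \Rightarrow (iii) \Rightarrow (iv)$. For $(iv) \Rightarrow (ii)$: since $(\Phi')^{-1}$ maps $\R$ onto $J$, the function $M^{\F}_{\Phi}$ takes values in $J$ everywhere and $\Phi'(M^{\F}_{\Phi}) = \sum_{i \in I} \alpha_i(\omega) \, \phi_i$; for any $f \in \F$, exchanging sum and integral (legitimate either by finiteness of $I$ or by the absolute integrability flowing from $f, M^{\F}_{\Phi} \in \F$) gives
\begin{equation*}
\int_{\E} (f - M^{\F}_{\Phi}) \, \Phi'(M^{\F}_{\Phi}) \, \dd \mu \;=\; \sum_{i \in I} \alpha_i(\omega) \, (\omega_i - \omega_i) \;=\; 0,
\end{equation*}
and then the three-term identity forces $\HH_{\Phi}[f|M^{\F}_{\Phi}] = H_{\Phi}(f) - H_{\Phi}(M^{\F}_{\Phi}) < \infty$. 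The equivalence $(i) \Leftrightarrow (ii)$ is then a direct rearrangement of the three-term identity: both conditions are equivalent to $\int_{\E} (f-g) \, \Phi'(g) \, \dd \mu = 0$, together with the (mutually equivalent) finiteness assertions. The implication $(i) \Rightarrow (iii)$ is immediate, as $H_{\Phi}(f) - H_{\Phi}(g) = \HH_{\Phi}[f|g] \geq 0$ for every $f \in \F$. Finally, for $(iii) \Rightarrow (iv)$, the preceding arrows applied to $M^{\F}_{\Phi}$ show that it is itself a minimizer, so $H_{\Phi}(g) = H_{\Phi}(M^{\F}_{\Phi})$; applying the three-term identity with the reference $M^{\F}_{\Phi}$ (which satisfies $(ii)$) gives $\HH_{\Phi}[g|M^{\F}_{\Phi}] = 0$, and since $M^{\F}_{\Phi} \in J$ everywhere and $\Phi'' > 0$ on $J$, the nonnegative integrand $(g - M^{\F}_{\Phi})^2 \, \Phi''\bigl((1-\tau) M^{\F}_{\Phi} + \tau g\bigr)$ must vanish $\dd \mu \otimes \dd \tau$-a.e., forcing $g = M^{\F}_{\Phi}$ $\mu$-a.e.

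The main technical obstacle will be controlling integrability at the boundary of $J$ (where $\Phi'$ may blow up) and across the possibly infinite index set $I$. In particular, justifying the swap $\int (f-g) \sum_i \alpha_i \phi_i \, \dd \mu = \sum_i \alpha_i \int (f-g) \phi_i \, \dd \mu$ when $|I| = \infty$ needs an absolute-convergence argument, which is implicitly available from the working hypothesis that both $f$ and $M^{\F}_{\Phi}$ belong to $\F$ together with the very definition of $M^{\F}_{\Phi}$ via $(\Phi')^{-1}$. The saving grace throughout is the pointwise nonnegativity of the Taylor remainder, which lets one upgrade every finiteness step by monotone convergence rather than having to track delicate cancellations.
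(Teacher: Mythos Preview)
Your proposal is correct and follows essentially the same approach as the paper: both hinge on the Taylor identity $H_\Phi[f|g] = \int (f-g)\Phi'(g)\,\dd\mu + \HH_\Phi[f|g]$, the vanishing of the linear term when $\Phi'(g)$ is a combination of the $\phi_i$, and the strict positivity of $\Phi''$ to extract $g = M^{\F}_\Phi$ from $\HH_\Phi[g|M^{\F}_\Phi] = 0$. The only cosmetic difference is the implication chain: the paper proves $(ii)\Rightarrow(iv)$ directly via the monotonicity trick $\int (g-M^{\F}_\Phi)(\Phi'(g)-\Phi'(M^{\F}_\Phi))\,\dd\mu = 0$, whereas you route through $(i)\Rightarrow(iii)\Rightarrow(iv)$, which is slightly more economical and avoids that extra step.
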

 
\noindent The above proposition actually proves the following (under the assumptions $\Phi'(J) = \R$ and of existence of $M^{\F}_{\Phi}$). An admissible distribution $g$ is an equilibrium relative to the conserved quantities $\phi_i$, in the sense of the minimization of the $\Phi$-entropy $[(iii)]$, if and only if $\Phi'(g)$ is a linear combination of the functions $\phi_i$ $[(iv)]$, if and only if the relative $\Phi$-entropy between any admissible distribution $f$ and $g$ is given by \eqref{eq:phirelatentrop} $[(i)]$, if and only if the quantity $\Phi'(g)$ is conserved amongst all admissible distributions $[(ii)]$ - indeed, $(ii)$ is equivalent, assuming the following integrals make sense, to $\forall \, f_1,f_2 \in \F$, $\int_{\E} f_1 \, \Phi'(g) \, \dd \mu (\zeta) = \int_{\E} f_2 \, \Phi'(g) \, \dd \mu (\zeta)$. 

\smallskip

\noindent The reader may notice that the Classical case corresponds to the choice $\Phi(x) \equiv \Phi_0(x) = x \log x - x$, for which $(\Phi'_0)^{-1} = \exp$, hence $M^{\F}_{\Phi}$ is in this case a Maxwellian, since the conserved quantities (corresponding to the functions $\phi_i$ in the proposition) are $v \mapsto 1$, $v \mapsto v$ and $v \mapsto |v|^2$. 

\smallskip

\noindent Moreover, the Fermi-Dirac case corresponds to the choice $\displaystyle \Phi(x) \equiv \Phi_{\varepsilon}(x) \equiv \int_0^x \log \frac{y}{1 - \varepsilon y} \, \dd y$, for which $\displaystyle (\Phi'_{\varepsilon})^{-1}(x) = \frac{e^x}{1 + \varepsilon e^x}$, hence $M^{\F}_{\Phi}$ is in this case a Fermi-Dirac distribution, since again, the conserved quantities (corresponding to the functions $\phi_i$ in the proposition) are $v \mapsto 1$, $v \mapsto v$ and $v \mapsto |v|^2$. Proposition~\ref{prop:firstformulaHeps} then comes as a corollary to Proposition~\ref{prop:general} with $\E = \R^3$ endowed with the Lebesgue measure, $J = (0,\varepsilon^{-1})$, $\displaystyle \Phi(x) \equiv \Phi_{\varepsilon}(x) \equiv \int_0^x \log \varphi_{\varepsilon}(y) \, \dd y$, $M^{\F}_{\Phi} \equiv \Me$ and
$$
\F = \left\{0 \leq f \in L^1_2(\R^3) \left| \; 1 - \varepsilon f \geq 0, \quad \int_{\R^3} f(v) \begin{pmatrix}
1 \\ v \\ |v|^2
\end{pmatrix} \dd v = \begin{pmatrix}
\rho \\ \rho \, u \\ 3 \, \rho T + \rho \, |u|^2
\end{pmatrix}\right. \right\},
$$
with  $\rho, T$ and $\varepsilon$ such that $\displaystyle \gamma > \frac{2}{5}$ (ensuring the existence of $\Me$, as proven in~\cite{lu2001spatially}).
\begin{remark}
    The Bose-Einstein case is also recovered with $\displaystyle \Phi(x) \equiv \Phi^{BE}_{\varepsilon}(x) \equiv \int_0^x \log \frac{y}{1 + \varepsilon y} \, \dd y$, for which $\displaystyle (\Phi'_{-\varepsilon})^{-1}(x) = \frac{e^x}{1 - \varepsilon e^x}$, hence $M^{\F}_{\Phi}$ is in this case a Bose-Einstein distribution (when it exists).
\end{remark} 
\begin{proof}
We start by proving $(i) \iff (ii)$. Let $f,g \in \F$ such that $g \in J$  $\mu$-almost everywhere and $\HH_{\Phi}[f|g]~<~\infty$. Using a Taylor expansion followed by Fubini's Theorem, we get
\begin{align*}
 H_{\Phi}[f|g] &= \int_{\E} (\Phi(f) - \Phi(g)) \, \dd \mu(\zeta) = \int_{g \in J} (\Phi(f) - \Phi(g)) \, \dd \mu(\zeta) \\
 &= \int_{g \in J} \left( (f-g) \Phi'(g) + (f-g)^2 \int_0^1 (1-\tau) \Phi''((1-\tau)g + \tau f) \, \dd \tau  \right) \dd \mu (\zeta) \\
 &= \int_{g \in J} (f-g) \Phi'(g) \, \dd \mu (\zeta) + \HH_{\Phi}[f|g],
\end{align*}
thus proving the announced equivalence. Remark that in the last equality we used the fact that $\HH_{\Phi}[f|g] < \infty$ to ensure that $\int_{g \in J} (f-g) \Phi'(g) \, \dd \mu (\zeta)$ is well-defined. We now show $(iv) \implies (ii)$. Since $\mathrm{Im}({\Phi'}^{-1}) = J$, we do have $M^{\F}_{\Phi} \in J$  $\mu$-almost everywhere, and
\begin{align*}
\int_{M^{\F}_{\Phi} \in J} (f-M^{\F}_{\Phi}) \, \Phi'(M^{\F}_{\Phi}) \, \dd \mu (\zeta) &= \int_{\E}(f-M^{\F}_{\Phi}) \sum_{i} \alpha_i(\omega_i) \, \phi_i \, \dd \mu(\zeta) \\&= \sum_i \alpha_i(\omega_i) \left(\int_{\E} f \, \phi_i \, \dd \mu(\zeta) - \int_{\E} M^{\F}_{\Phi} \, \phi_i \, \dd \mu(\zeta) \right) = 0,
\end{align*}
where the last equality comes from the fact that both $f$ and $M^{\F}_{\Phi}$ belong to $\F$. We now focus on $(ii) \implies (iv)$. Assume the existence of $g \in \F$ such that $g \in J$  $\mu$-almost everywhere and
$$
\forall \, f \in \F, \quad \int_{g \in J} (f-g) \, \Phi'(g) \, \dd \mu(\zeta) = 0.
$$
Since we also have $M^{\F}_{\Phi} \in\F$, then $M^{\F}_{\Phi} \in J$  $\mu$-almost everywhere and we just proved that
$$
\int_{M^{\F}_{\Phi} \in J} (g-M^{\F}_{\Phi}) \, \Phi' (M^{\F}_{\Phi}) \, \dd \mu(\zeta) = 0,
$$
allowing to deduce that
$$
\int_{\E}(g-M^{\F}_{\Phi})(\Phi'(g) - \Phi'(M^{\F}_{\Phi})) \, \dd \mu(\zeta) = 0.
$$
Since $\Phi'$ is increasing, this implies that $g = M^{\F}_{\Phi}$ $\mu$-almost everywhere. We now focus on $(iv)~\implies~(iii)$. Since we already proved $(iv) \implies (ii) \implies (i)$, we have for any $f \in \F$,
$$
H_{\Phi}[f|M^{\F}_{\Phi}] = \HH_{\Phi}[f|M^{\F}_{\Phi}] \geq 0,
$$
thus
$$
H_{\Phi}(f) \geq H_{\Phi}(M^{\F}_{\Phi}).
$$
Finally, we prove $(iii) \implies (iv)$. Assume $H_{\Phi}(g) = \underset{h \in \F}{\min} \;  H_{\Phi}(h)$. We just proved that $H_{\Phi}(M^{\F}_{\Phi}) = \underset{h \in \F}{\min} \;  H_{\Phi}(h)$, hence $H_{\Phi}(g) = H_{\Phi}(M^{\F}_{\Phi})$ and $H_{\Phi}[g|M^{\F}_{\Phi}] = 0$. Since we also proved $(iv) \implies (i)$, we know that 
$$
\HH_{\Phi}[g|M^{\F}_{\Phi}] = H_{\Phi}[g|M^{\F}_{\Phi}],
$$
hence $\HH_{\Phi}[g|M^{\F}_{\Phi}] = 0$, that is
$$
\int_0^1 (1-\tau) \left( \int_{M^{\F}_{\Phi} \in J} (g-M^{\F}_{\Phi})^2 \, \Phi''((1-\tau)M + \tau g) \, \dd \mu(\zeta) \right) \dd \tau = 0.
$$
This implies, since $M^{\F}_{\Phi} \in J$ and $(1-\tau)M^{\F}_{\Phi} + \tau g \in J$ $\mu$-almost everywhere for any $\tau \in (0,1)$, and $\Phi''>0$ on $J$, that $g=M^{\F}_{\Phi}$ $\mu$ almost everywhere.
\end{proof}

\medskip

\noindent \textbf{General Csiszár-Kullback-Pinsker inequality.} The famous Csiszár-Kullback-Pinsker inequality, linking the squared $L^1$ distance of two probabilities with their relative Classical entropy can in fact be generalized to the whole family of $\Phi$-entropies, and for weighted $L^r$, $1 \leq r \leq 2$ distances, by the following Proposition~\ref{theorem:generalCK}, and more specifically, in the context of convergence towards equilibrium, by Corollary~\ref{cor:realCKgeneral}. Again, such inequalities may be useful in the study of weak turbulence~\cite{bredendesvillettes}.

\begin{proposition}  \label{theorem:generalCK}  
Let some $(\A$,~$\mathrm{Bor}(\Bar{J}))$-measurable $\varpi : \E \to \Bar{J}$ and $r \in [1,2]$. Then for any $(\A$,~$\mathrm{Bor}(\Bar{J}))$-measurable $f,g : \E \to \Bar{J}$ such that all terms below are finite, we have 
\begin{equation} \label{eq:CKgeneral}
    \left\| (f-g) \, \varpi \right\|_{L^r(g \in J)}^2 \leq \left(\int_0^1 (1-\tau) \, \left\|\frac{ \varpi^2}{\Phi''((1-\tau)g + \tau f) } \right\|_{L^{\frac{r}{2-r}}(g \in J)}^{-1} \, \dd \tau \right)^{-1} \, \HH_{\Phi}[f|g],
\end{equation}
where $\HH_{\Phi}$ is the $\Phi$-relative-entropy defined in \eqref{eq:phirelatentrop}.
\end{proposition}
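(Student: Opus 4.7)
The plan is to derive the inequality from a single application of Hölder's inequality (pointwise in the auxiliary parameter $\tau \in (0,1)$) followed by a weighted integration in $\tau$ that exploits the precise form of $\HH_\Phi[f|g]$.

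First I would fix $\tau \in (0,1)$ and rewrite the integrand defining $\|(f-g)\varpi\|_{L^r(g\in J)}^r$ as a product
\[
|f-g|^r \varpi^r = \Bigl(|f-g|^2\,\Phi''((1-\tau)g+\tau f)\Bigr)^{r/2}\cdot\Bigl(\frac{\varpi^2}{\Phi''((1-\tau)g+\tau f)}\Bigr)^{r/2},
\]
which makes sense on $\{g\in J\}$ because $J$ is an open interval (hence convex), so $(1-\tau)g+\tau f \in J$ whenever $g\in J$ and $\tau\in(0,1)$, and $\Phi''>0$ on $J$. For $r\in[1,2)$, Hölder's inequality with conjugate exponents $2/r$ and $2/(2-r)$ then yields
\[
\|(f-g)\varpi\|_{L^r(g\in J)}^{\,2} \;\leq\; A(\tau)\, B(\tau),
\]
where I denote
\[
A(\tau) := \int_{g\in J}(f-g)^2\,\Phi''((1-\tau)g+\tau f)\,\dd\mu,\qquad B(\tau) := \Bigl\|\tfrac{\varpi^2}{\Phi''((1-\tau)g+\tau f)}\Bigr\|_{L^{r/(2-r)}(g\in J)}.
\]
The limiting case $r=2$ is handled directly by the standard pairing of $L^1$ with $L^\infty$, which gives the same inequality with $B(\tau)$ interpreted in $L^\infty$.

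Next, rearranging gives $\|(f-g)\varpi\|_{L^r(g\in J)}^{\,2}/B(\tau) \leq A(\tau)$ for every $\tau\in(0,1)$. Multiplying by $(1-\tau)$ and integrating in $\tau$ over $(0,1)$, the right-hand side becomes exactly $\HH_\Phi[f|g]$ by definition~\eqref{eq:phirelatentrop}, while on the left the factor $\|(f-g)\varpi\|_{L^r(g\in J)}^{\,2}$ is independent of $\tau$ and can be pulled out. This yields
\[
\|(f-g)\varpi\|_{L^r(g\in J)}^{\,2}\, \int_0^1\frac{1-\tau}{B(\tau)}\,\dd\tau \;\leq\; \HH_\Phi[f|g],
\]
which is~\eqref{eq:CKgeneral} after dividing by the integral.

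The only real subtlety is verifying that all integrals and $L^p$ norms appearing in the argument are finite/measurable under the proposition's blanket assumption that \emph{all terms below are finite}; once this is granted, the rearrangement ``$\|(f-g)\varpi\|_{L^r}^2 \le A(\tau)B(\tau)$ uniformly in $\tau$, then divide and integrate'' is the whole proof. The case $r=2$ just needs a one-line separate treatment to avoid the indeterminate exponent $r/(2-r)$. Specializing $\Phi=\Phi_\varepsilon$ and $\Phi=\Phi^{BE}_\varepsilon$ will then yield Propositions~\ref{theorem:CK} and~\ref{prop:CKBE} respectively, via the Minkowski-in-$\tau$ step already performed in their proofs.
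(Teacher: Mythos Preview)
Your proof is correct and follows essentially the same approach as the paper: a pointwise-in-$\tau$ H\"older inequality with exponents $2/r$ and $2/(2-r)$ to obtain $\|(f-g)\varpi\|_{L^r(g\in J)}^2 \le A(\tau)B(\tau)$, followed by dividing through by $B(\tau)$, multiplying by $(1-\tau)$ and integrating. The paper also notes the small caveats you flag (finiteness of $B(\tau)$ for a.e.\ $\tau$, and the trivial case $\varpi \equiv 0$), but otherwise the arguments are identical.
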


\begin{proof} Let us recall the definition of $\HH_{\Phi}[f|g]$, that is
$$
\HH_{\Phi}[f|g] = \int_0^1 (1-\tau) \left( \int_{g \in J} (f-g)^2 \, \Phi''((1-\tau)g + \tau f) \, \dd \mu(\zeta) \right) \dd \tau.
$$
We fix $\tau \in (0,1)$. Let $p = \frac{2}{r}\in[1,2]$ and $q = \frac{2}{2-r} \in [2,+\infty]$, so that $\displaystyle \frac{1}{p} + \frac{1}{q} = 1$. By Hölder's inequality, we have
\begin{align*}
\int_{g \in J} |f-g|^{\frac{2}{p}} \, \varpi^{\frac{2}{p}} \, \dd \mu(\zeta) &= \int_{g \in J} |f-g|^{\frac{2}{p}}\Phi''((1-\tau)g + \tau f)^{\frac{1}{p}} \, \left[ \frac{\varpi^2}{\Phi''((1-\tau)g + \tau f)} \right]^{\frac{1}{p}} \, \dd \mu(\zeta) \\
&\leq \left(\int_{g \in J} (f-g)^2 \, \Phi''((1-\tau)g + \tau f) \, \dd \mu(\zeta) \right)^{\frac{1}{p}} \, \left\|\left[\frac{\varpi^2}{\Phi''((1-\tau)g + \tau f)} \right]^{\frac{1}{p}} \right\|_{L^q(g \in J)}.
\end{align*}
Raising the above inequality to the power $p$, we obtain
\begin{equation} \label{eq:proofCK1}
\left(\int_{g \in J} |f-g|^{\frac{2}{p}}  \, \varpi^{\frac{2}{p}} \, \dd \mu(\zeta) \right)^p  \leq \left(\int_{g \in J} (f-g)^2 \, \Phi''((1-\tau)g + \tau f) \, \dd \mu(\zeta) \right) \, \left\|\frac{\varpi^2}{\Phi''((1-\tau)g + \tau f)} \right\|_{L^{\frac{q}{p}}(g \in J)},
\end{equation}
where we used the fact that $\|\cdot^{\frac{1}{p}} \|^p_{L^q} = \|\cdot\|_{L^{\frac{q}{p}}}$. Since $\frac{2}{p} = r$ and $\frac{q}{p} = \frac{r}{2-r}$, \eqref{eq:proofCK1} actually writes
\begin{equation*}
\left\| (f-g) \, \varpi \right\|_{L^r(g \in J)}^2  \leq \left(\int_{g \in J} (f-g)^2 \, \Phi''((1-\tau)g + \tau f) \, \dd \mu(\zeta) \right) \, \left\|\frac{\varpi^2}{\Phi''((1-\tau)g + \tau f)} \right\|_{L^{\frac{r}{2-r}}(g \in J)}.
\end{equation*}
If $\varpi$ is zero $\mu$-almost everywhere on $\{g \in J\}$, then the proposition is trivial. Else, since $\Phi'' > 0$ on $\{g \in J \}$ and $(1-\tau)g + \tau f \in J$ on $\{g \in J\}$ for any $\tau \in (0,1)$, we know that $\left\|\Phi''((1-\tau)g + \tau f) \, \varpi^2 \right\|_{L^{\frac{r}{2-r}}(g \in J)} > 0$. Since we assumed that the quantity
$$
\left(\int_0^1 (1-\tau) \, \left\|\frac{ \varpi^2}{\Phi''((1-\tau)g + \tau f) } \right\|_{L^{\frac{r}{2-r}}(g \in J)}^{-1} \, \dd \tau \right)^{-1}
$$
is finite, we also know that for almost every $\tau \in (0,1)$ we have $\displaystyle \left\|\frac{\varpi^2}{\Phi''((1-\tau)g + \tau f)} \right\|_{L^{\frac{r}{2-r}}(g \in J)} < \infty$. For these values of $\tau$, we then have
\begin{equation*}
\left\| (f-g) \, \varpi \right\|_{L^r(g \in J)}^2 \; \frac{1-\tau}{\left\|\frac{\varpi^2}{\Phi''((1-\tau)g + \tau f)} \right\|_{L^{\frac{r}{2-r}}(g \in J)}} \leq (1 - \tau) \left(\int_{g \in J} (f-g)^2 \, \Phi''((1-\tau)g + \tau f) \, \dd \mu(\zeta) \right).
\end{equation*}
Integrating in $\tau$ yields
$$
\left\| (f-g) \, \varpi \right\|_{L^r(g \in J)}^2 \left( \int_0^1 (1-\tau) \, \left\|\frac{ \varpi^2}{\Phi''((1-\tau)g + \tau f) } \right\|_{L^{\frac{r}{2-r}}(g \in J)}^{-1} \, \dd \tau \right) \leq \HH_{\Phi}[f|g].
$$
Since, again by hypothesis, the integral in $\tau$ is nonzero (its inverse is finite), we obtain \eqref{eq:CKgeneral}.
\end{proof}

\noindent Remarking that $\{M_{\Phi}^{\F} \in J\} = \E$ and that for any $f\in \F$, $\HH_{\Phi}[f|M_{\Phi}^{\F}] = H_{\Phi}[f|M_{\Phi}^{\F}]$ (see Proposition~\ref{prop:general}), we straightforwardly obtain the following corollary.

\begin{corollary} \label{cor:realCKgeneral}
Let some $(\A$,~$\mathrm{Bor}(\Bar{J}))$-measurable $\varpi : \E \to \Bar{J}$ and $r \in [1,2]$. With the same notations as in Proposition~\ref{prop:general}, assuming $M_{\Phi}^{\F}$ exists, we have for any $f \in \F$ such that the integral term below is finite, 
\begin{equation} \label{eq:realCKgeneral}
    \left\| (f-M_{\Phi}^{\F}) \, \varpi \right\|_{L^r}^2 \leq \left(\int_0^1 (1-\tau) \, \left\|\frac{ \varpi^2}{\Phi''((1-\tau)M_{\Phi}^{\F} + \tau f) } \right\|_{L^{\frac{r}{2-r}}}^{-1} \, \dd \tau \right)^{-1} \, H_{\Phi}[f|M_{\Phi}^{\F}],
\end{equation}
where $H_{\Phi}$ is defined in \eqref{eqdef:philantrop}--\eqref{eqdef:relatphilantrop} and $M_{\Phi}^{\F}$ is the equilibrium associated to $\Phi$ and the set $\F$, defined in \eqref{eq:generalequilibrium}.
\end{corollary}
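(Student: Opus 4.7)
The plan is to derive Corollary~\ref{cor:realCKgeneral} as an immediate specialization of Proposition~\ref{theorem:generalCK} to $g = M_{\Phi}^{\F}$, combined with the identity $H_{\Phi}[f|M_{\Phi}^{\F}] = \HH_{\Phi}[f|M_{\Phi}^{\F}]$ supplied by Proposition~\ref{prop:general}. In fact the remark preceding the statement already spells this out: what remains is merely to verify that the restriction-to-$\{g\in J\}$ under the norms in \eqref{eq:CKgeneral} becomes vacuous for $g = M_{\Phi}^{\F}$, and that the finiteness hypotheses match up.

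First, I would observe that $\{M_{\Phi}^{\F} \in J\} = \E$. Indeed, by the definition $M_{\Phi}^{\F} = (\Phi')^{-1}\bigl(\sum_{i\in I}\alpha_i(\omega)\,\phi_i\bigr)$, and since $\Phi'' > 0$ on $J$ together with the standing assumption $\Phi'(J) = \R$ make $\Phi'$ a $\mathcal{C}^1$-diffeomorphism from $J$ onto $\R$, the inverse $(\Phi')^{-1}$ takes values in $J$. Consequently the restricted norms $\|\cdot\|_{L^r(M_{\Phi}^{\F} \in J)}$ and $\|\cdot\|_{L^{r/(2-r)}(M_{\Phi}^{\F} \in J)}$ coincide with the full $L^r$ and $L^{r/(2-r)}$ norms on $(\E,\A,\mu)$.

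Second, I would invoke Proposition~\ref{prop:general}. Since $M_{\Phi}^{\F} \in \F$ by the existence hypothesis, the implication $(iv) \Rightarrow (i)$ of that proposition gives, for every $f \in \F$,
$$
H_{\Phi}[f \,|\, M_{\Phi}^{\F}] \;=\; \HH_{\Phi}[f \,|\, M_{\Phi}^{\F}].
$$
Thus the $\Phi$-relative-entropy $\HH_{\Phi}$ appearing on the right-hand side of \eqref{eq:CKgeneral} equals the relative $\Phi$-entropy $H_{\Phi}$ defined in \eqref{eqdef:philantrop}--\eqref{eqdef:relatphilantrop}.

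Injecting these two facts into Proposition~\ref{theorem:generalCK} applied at $g = M_{\Phi}^{\F}$ produces exactly \eqref{eq:realCKgeneral}. The finiteness hypotheses transfer transparently: the assumption that the $\tau$-integral on the right-hand side of \eqref{eq:realCKgeneral} is finite is precisely the corresponding hypothesis of Proposition~\ref{theorem:generalCK}, while $f, M_{\Phi}^{\F} \in \F \subset E_{\Phi}$ guarantees that $H_{\Phi}[f \,|\, M_{\Phi}^{\F}]$ is well-defined. There is no genuine obstacle to overcome here — the corollary is a clean reformulation of Proposition~\ref{theorem:generalCK} at the minimizer of the $\Phi$-entropy over $\F$; the only conceptual content has been already deposited in Propositions~\ref{prop:general} and~\ref{theorem:generalCK}.
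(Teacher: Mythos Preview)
Your proposal is correct and follows exactly the same approach as the paper: the paper's proof consists of the single remark that $\{M_{\Phi}^{\F} \in J\} = \E$ and that $\HH_{\Phi}[f|M_{\Phi}^{\F}] = H_{\Phi}[f|M_{\Phi}^{\F}]$ by Proposition~\ref{prop:general}, after which the corollary follows immediately from Proposition~\ref{theorem:generalCK}. You have simply spelled out these two observations in more detail.
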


\section{Technical results} \label{appendix:technicallemmas}

\noindent In this section, we consider $\Me$, the Fermi-Dirac distribution associated to some $\e>0 $ and $0 \leq f \in L^1_2(\R^3)$ such that $1 - \varepsilon f \geq 0$. The existence of $\Me$ is provided by assuming $\displaystyle \gamma > \frac25$ (see~\cite{lu2001spatially}), where we recall the notation
\begin{equation} \label{eqdefnotation:gamma}
\gamma := \frac{T}{T_F(\rho, \e)},
\end{equation}
where $\displaystyle T_F(\rho, \e) = \frac12 \left(\frac{3 \rho \varepsilon}{4 \pi} \right)^{2/3}$ is the Fermi temperature associated to $\rho$ and $\varepsilon$; and $\rho, T$ are respectively the density and temperature associated to the distribution $f$, defined in \eqref{eq:normalizationdef}. We also recall the notation, for $x \in [0,\e^{-1})$,
$$
\displaystyle \varphi_{\e}(x) = \frac{x}{1- \e x}.
$$
\subsection{\texorpdfstring{$L^\infty$ bound for the Fermi-Dirac statistics}{Linf bound for the Fermi-Dirac statistics}} In this subsection, we provide an $L^\infty$ bound on the Fermi-Dirac statistics. The following result is very similar to \cite[Lemma~A.1]{ABL}.
\begin{proposition} \label{prop:kappa1}
Let $\e>0$ and $f \in L^1_2(\R^3)$ be a nonnegative distribution such that $1 - \varepsilon f \geq 0$ and $\displaystyle \gamma \geq \gamma^{\dag}$, where $\gamma$ is given by~\eqref{eqdefnotation:gamma} and 
\begin{equation} \label{eqdefprop:gammadag}
   \gamma^{\dag} := \left(\frac{4}{\pi}\right)^{\frac13} \left( \frac{5}{3} \right)^{\frac{5}{3}}.
\end{equation}
Then the quantity $\varepsilon \| \varphi_{\varepsilon}(\Me) \|_{\infty}$ satisfies
\begin{equation} \label{eq:propkappa11}
\varepsilon \| \varphi_{\varepsilon}(\Me) \|_{\infty} \leq \frac23 \left(\frac{\gamma}{\gamma^{\dag}} \right)^{-3/2}. 
\end{equation}
\end{proposition}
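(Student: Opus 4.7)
The plan is to reduce to a bathtub/rearrangement estimate on $\Me$ itself, then pass from an $L^\infty$-bound on $\Me$ to one on $\vi_\e(\Me)$ using the explicit form of the Fermi-Dirac profile, and finally verify the announced constant by a short algebraic comparison.

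Since $\Me$ shares the macroscopic quantities $(\rho,u,T)$ of $f$, it is enough to work with $\Me$ directly. Writing $\Me(v) = \vi_\e^{-1}(A e^{-\beta|v-u|^2})$ for some $A, \beta > 0$, the monotonicity of $\vi_\e^{-1}$ ensures $\Me$ is radially non-increasing around $u$, with $\|\Me\|_\infty = A/(1+\e A)$ and $\|\vi_\e(\Me)\|_\infty = A$ both attained at $v = u$. Introduce $c := \e \|\Me\|_\infty$ and $z := \e \|\vi_\e(\Me)\|_\infty = \e A$, linked by $z = c/(1-c)$; the target~\eqref{eq:propkappa11} becomes an upper bound on $z$. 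Choose $R > 0$ by $\tfrac{4\pi}{3} R^3 \cdot c \e^{-1} = \rho$. Using $\Me \leq c \e^{-1}$ together with radial monotonicity, the bathtub principle applied to the weight $v \mapsto |v-u|^2$ gives
\begin{equation*}
3\rho T = \int_{\R^3} \Me(v) \, |v-u|^2 \, \dd v \geq c \e^{-1} \int_{B_R(u)} |v-u|^2 \, \dd v = \frac{3 \rho R^2}{5}.
\end{equation*}
Combining with $T_F(\rho,\e) = \tfrac12 (3\rho\e/(4\pi))^{2/3}$ and the definition of $R$, this rearranges into the refined form of Lu's inequality
\begin{equation*}
c \leq \left( \frac{2}{5\gamma} \right)^{3/2},
\end{equation*}
which is also \cite[Proposition~4]{lu2001spatially} applied to $\Me$.

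To conclude, the assumption $\gamma \geq \gamma^{\dag}$ yields $c \leq c^{\dag} := (2/(5\gamma^{\dag}))^{3/2}$, hence
\begin{equation*}
z = \frac{c}{1-c} \leq \frac{(2/(5\gamma))^{3/2}}{1 - c^{\dag}}.
\end{equation*}
It thus suffices to check $(2/5)^{3/2}/(1 - c^{\dag}) \leq (2/3)(\gamma^{\dag})^{3/2}$. A direct computation using $c^{\dag} = (2/5)^{3/2}(\gamma^{\dag})^{-3/2}$ gives $(2/3)(\gamma^{\dag})^{3/2} c^{\dag} = (2/3)(2/5)^{3/2}$, so the inequality reduces to $(5/2)(2/5)^{3/2} \leq (\gamma^{\dag})^{3/2}$, i.e.\ $\sqrt{2/5} \leq (\gamma^{\dag})^{3/2}$, which is immediate since $\gamma^{\dag} = (4/\pi)^{1/3}(5/3)^{5/3} > 1$. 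The one step demanding real care is the rearrangement estimate leading to the refined Lu bound; everything else is an explicit tracking of constants.
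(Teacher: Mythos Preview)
Your rearrangement argument yields the wrong inequality. From $3\rho T \geq \tfrac{3\rho R^2}{5}$ you get $R^2 \leq 5T$, and since $R^3 = \tfrac{3\rho\e}{4\pi c}$ this gives $\tfrac{3\rho\e}{4\pi c} \leq (5T)^{3/2}$, i.e.
\[
c \;\geq\; \Bigl(\tfrac{2}{5\gamma}\Bigr)^{3/2},
\]
which is a \emph{lower} bound on $c=\e\|\Me\|_\infty$, not the upper bound you claim. This is precisely the content of Lu's result (and of the bathtub principle): among functions with fixed mass and an $L^\infty$ cap, the indicator on a ball \emph{minimises} the second moment, so the energy constraint forces $\|\Me\|_\infty$ to be at least a certain size, not at most. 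In fact your claimed bound $c \leq (2/(5\gamma))^{3/2}$ is false already in the high-temperature regime, where the Fermi--Dirac distribution is nearly Maxwellian and a direct computation of the constants shows $c \sim 3\,(2/(5\gamma))^{3/2}$.

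More fundamentally, no rearrangement argument using only radial monotonicity, the cap $\Me\leq \e^{-1}$, and the moments can produce an upper bound on $\|\Me\|_\infty$ sharper than $\e^{-1}$: one can always redistribute mass to touch the cap on a small set while keeping $\rho$ and $T$ fixed. The paper's proof instead exploits the explicit Fermi--Dirac structure through the function $P(t)=I_4(t)I_2(t)^{-5/3}$, which is strictly increasing and satisfies $P\bigl(1/(\e\|\vi_\e(\Me)\|_\infty)\bigr)=\tfrac{3^{5/3}}{2}\gamma$; comparing $P(t)$ against an explicit Gaussian upper bound then yields the required upper bound on $z=\e\|\vi_\e(\Me)\|_\infty$ directly.
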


\begin{proof}
Our proof is based on~\cite[proof of Proposition 3]{lu2001spatially}. We introduce, for $s \geq 0$,
$$
I_s(t) := \int_0^{\infty} \frac{r^s}{1 + t e^{r^2}} \, \dd r, \qquad P(t) := I_4(t) [I_2(t)]^{-5/3}, \quad t>0.
$$
It is proven in~\cite[proof of Proposition 3]{lu2001spatially} that $P$ is continuous and increasing on $\R_+$, and that
$$
P \left( \frac{1}{\varepsilon  \| \varphi_{\varepsilon}(\Me) \|_{\infty}} \right) = 3 \rho^{-2/3} \, T \, (4 \pi \varepsilon^{-1})^{2/3} \equiv \frac{3^{5/3}}{2} \; \gamma.
$$
Let $\alpha > 0$ and
$$
t = \alpha^{-1} \times \frac{3 \sqrt{\pi}}{4} \, \gamma^{3/2}.
$$
As, for any $r \geq 0$, it holds that
$$
\frac{e^{-r^2}}{1+t} \leq \frac{1}{1+t \, e^{r^2}} \leq \frac{e^{-r^2}}{t},
$$
we have
$$
P(t) \leq \left(\frac{1}{t} \int_0^{\infty} r^4 \, e^{-r^2} \, \dd r \right) \left(\frac{1}{1+t} \int_0^{\infty} r^2 \, e^{-r^2} \, \dd r \right)^{-5/3} = \frac{(1+t)^{5/3}}{t} \, \left( \frac12 \Gamma \left(\frac{5}{2} \right) \right)  \left( \frac12 \Gamma \left(\frac{3}{2} \right) \right)^{-5/3},
$$
that is
$$
P(t) \leq  \frac{(1+t)^{5/3}}{t} \times \frac{3 \times 2^{1/3}}{ \pi^{1/3}}.
$$
We define
$$
\gamma^{\alpha} := \left(\frac{4}{3 \sqrt{\pi}} \times \frac{\alpha}{\alpha^{2/5}-1} \right)^{2/3}.
$$
Then, whenever $\gamma \geq \gamma^{\alpha}$, we have $\displaystyle t \geq \frac{1}{\alpha^{2/5} - 1}$, so that $1 + t \leq \alpha^{2/5} \,  t$, implying
\begin{align*}
P(t) \leq t^{2/3} \times \frac{3 \times 2^{1/3} \, \alpha^{2/3}}{\pi^{1/3}} =  \alpha^{-2/3} \times \left(\frac{3 \sqrt{\pi}}{4} \right)^{2/3} \gamma \times \frac{3 \times 2^{1/3} \, \alpha^{2/3}}{\pi^{1/3}} =  \frac{3^{5/3}}{2} \, \gamma = P \left( \frac{1}{\varepsilon  \| \varphi_{\varepsilon}(\Me) \|_{\infty}} \right).
\end{align*}
Since $P$ is increasing, we deduce that, whenever $\gamma \geq \gamma^{\alpha}$, we have
$$
\alpha^{-1} \times \frac{3 \sqrt{\pi}}{4} \, \gamma^{3/2} = t \leq \frac{1}{\varepsilon  \| \varphi_{\varepsilon}(\Me) \|_{\infty}},
$$
that is
\begin{equation} \label{eq:andthisprovesC}
\varepsilon \| \varphi_{\varepsilon}(\Me) \|_{\infty} \leq \alpha \times \frac{4}{3 \sqrt{\pi}} \gamma^{-3/2}.
\end{equation}
By computing the derivative of $\displaystyle \alpha \mapsto \frac{\alpha}{\alpha^{2/5}-1}$, we can minimize $\alpha \mapsto \gamma^{\alpha}$ and find that the minimum value is
$$
\gamma^{\dag} = \left(\frac{4}{\pi}\right)^{\frac13} \left( \frac{5}{3} \right)^{\frac{5}{3}},
$$
reached for $\alpha^{\dag} = \left( \frac{5}{3} \right)^{5/2}$, and, combined with~\eqref{eq:andthisprovesC}, this proves \eqref{eq:propkappa11}.

\medskip

\end{proof}

\bigskip

\subsection{\texorpdfstring{Regularity in $\e$ of the coefficients of the Fermi-Dirac statistics}{Regularity in epsilon of the coefficients of the Fermi-Dirac statistics}} In this last subsection, for any $\varepsilon \geq 0$ and $0 \leq g \in L^1_2(\R^3) \cap L \log L (\R^3)$, we denote by $\M_{\e} \equiv \M_{\e}^{\vi_{\e}^{-1}(g)}$ the $\e$-Fermi distribution associated to $\vi_{\e}^{-1}(g)$. In particular, in the limit case $\e = 0$, $\M_0$ is the Maxwellian distribution associated to $g$. We also denote $a_{\e}, b_{\e}, \bar{u}_{\e}$ and $\rho_{\e}, u_{\e},T_{\e}$ the quantities such that, letting
\begin{equation} \label{notationsannex}
\M_{\e} \equiv \M_{\e}^{\frac{g}{1 + \e g}}, \qquad M_{\e} = \frac{\M_{\e}}{1 - \e \M_{\e}},
\end{equation}
we have, for any $v \in \R^3$,
\begin{equation} \label{eq:notationMepscoeffs}
M_{\e}(v) = \exp \left(a_{\e} + b_{\e} |v-\Bar{u}_{\e}|^2 \right),
\end{equation}
and
\begin{equation} \label{notationsannex2}
\int_{\R^3} \frac{g}{1 + \e g} \begin{pmatrix}
       1 \\ v \\ |v|^2
\end{pmatrix} \dd v = \begin{pmatrix}
       \rho_{\e} \\ \rho_{\e} u_{\e} \\ 3 \rho_{\e} T_{\e} + \rho_{\e} |u_{\e}|^2
   \end{pmatrix}.
\end{equation}

\begin{lemma} \label{lemma:continuityeasy}
Let $0 \leq g \in L^1_2(\R^3) \cap L \log L(\R^3)$. Using the notation~\eqref{notationsannex2}, the application $\e \mapsto (\rho_{\e},u_{\e},T_{\e})$ is continuous on $\R_+$ and $\mathcal{C}^1$ on $\R_+^*$.
\end{lemma}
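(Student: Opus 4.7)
The strategy is to express $(\rho_{\e}, u_{\e}, T_{\e})$ as rational functions of the three scalar-or-vector moments
$$
m_k(\e) := \int_{\R^3} \frac{g(v)}{1 + \e g(v)} \, h_k(v) \, \dd v, \qquad h_0(v) = 1, \; h_1(v) = v, \; h_2(v) = |v|^2,
$$
so that by~\eqref{notationsannex2} one has $\rho_{\e} = m_0(\e)$, $\rho_{\e} u_{\e} = m_1(\e)$ and $3 \rho_{\e} T_{\e} + \rho_{\e} |u_{\e}|^2 = m_2(\e)$. It then suffices to prove that each $m_k$ is continuous on $\R_+$ and $\mathcal{C}^1$ on $\R_+^*$, since $g \not\equiv 0$ (implicit in the definition of $u_{\e}, T_{\e}$) yields $\rho_{\e} = m_0(\e) > 0$ for every $\e \geq 0$, and the formulas $u_{\e} = m_1(\e)/m_0(\e)$ and $T_{\e} = \bigl(m_2(\e)/m_0(\e) - |u_{\e}|^2 \bigr)/3$ are then smooth rational combinations of the $m_k$.

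\textbf{Continuity on $\R_+$.} I would apply dominated convergence. For any sequence $\e_n \to \e \geq 0$, one has the pointwise limit $g(v)/(1 + \e_n g(v)) \to g(v)/(1 + \e g(v))$, and the uniform-in-$\e$ pointwise bound
$$
\left| \frac{g(v)}{1 + \e g(v)} \, h_k(v) \right| \leq g(v) \, |h_k(v)|.
$$
Since $g \in L^1_2(\R^3)$ ensures $g \, h_k \in L^1(\R^3)$ for $k \in \{0,1,2\}$, dominated convergence gives continuity of each $m_k$.

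\textbf{$\mathcal{C}^1$-regularity on $\R_+^*$.} Fix $\e_0 > 0$ and pick $\delta \in (0, \e_0)$. For any $\e \in (\e_0 - \delta, \e_0 + \delta)$ the integrand is differentiable in $\e$ with
$$
\partial_{\e} \! \left( \frac{g(v)}{1 + \e g(v)} \right) = - \frac{g(v)^2}{(1 + \e g(v))^2},
$$
and using that $\e g /(1+\e g) \leq 1$, we obtain the uniform bound
$$
\left| \partial_{\e} \! \left( \frac{g(v)}{1 + \e g(v)} \right) h_k(v) \right| = \frac{g(v)}{1 + \e g(v)} \cdot \frac{g(v)}{1 + \e g(v)} \, |h_k(v)| \leq \frac{1}{\e_0 - \delta} \, g(v) \, |h_k(v)|,
$$
again integrable thanks to $g \in L^1_2$. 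Dominated convergence then justifies differentiation under the integral sign, yielding
$$
m_k'(\e) = - \int_{\R^3} \frac{g(v)^2}{(1 + \e g(v))^2} \, h_k(v) \, \dd v,
$$
and the same dominating function combined with one more application of dominated convergence gives continuity of $m_k'$ on $\R_+^*$. Combined with $\rho_{\e} > 0$ and the quotient formulas above, this yields the claimed $\mathcal{C}^1$-regularity of $\e \mapsto (\rho_{\e}, u_{\e}, T_{\e})$ on $\R_+^*$.

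\textbf{Main obstacle.} There is essentially no obstacle: the proof is a routine double application of dominated convergence. The only two points to be careful about are (i) to keep away from $\e = 0$ when differentiating, since at $\e = 0$ the derivative of the integrand equals $-g^2 h_k$, whose integrability would require an additional $L^2$-type assumption not present in the hypotheses, and (ii) to ensure $m_0(\e) > 0$ to make sense of the quotients defining $u_{\e}$ and $T_{\e}$, which follows from $g \not\equiv 0$.
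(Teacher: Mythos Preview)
Your proof is correct and follows essentially the same approach as the paper: both use dominated convergence with the bound $g(v)(1+|v|^2)$ for continuity on $\R_+$, and both bound the $\e$-derivative by $\e^{-1} g(v)(1+|v|^2)$ on intervals bounded away from zero to justify differentiation under the integral on $\R_+^*$. Your presentation is slightly more explicit in spelling out the moment maps $m_k$ and the quotient formulas for $u_\e,T_\e$, but the underlying argument is identical.
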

\begin{proof}
The continuity of $\e \mapsto (\rho_{\e},u_{\e},T_{\e})$ on $\R_+$ comes by dominated convergence, as it holds for any $\e \geq 0$ and $v \in \R^3$ that
$$
\frac{g}{1 + \e g} \, (1 + |v|^2)  \leq  g \, (1 + |v|^2),
$$
and by hypothesis $0 \leq g \in L^1_2(\R^3)$. Similarly, for any $\e > 0$ and $v \in \R^3$ we have
$$
\left|\partial_{\e} \frac{g}{1 + \e g}\right| \, (1 + |v|^2) = \left(\frac{g}{1 + \e g}\right)^2 \, (1 + |v|^2) \leq \e^{-1} \, g \, (1 + |v|^2).
$$
Therefore, for any $\e > 0$, we have
$$
\sup_{\e_* \in (\e/2,3\e/2)}\left|\partial_{\e_*} \frac{g}{1 + \e_* g}\right| \, (1 + |v|^2) \leq 2 \e^{-1} \, g \, (1 + |v|^2).
$$
The differentiability of $\e \mapsto (\rho_{\e},u_{\e},T_{\e})$ on $\R_+^*$ then comes by dominated convergence, as $g \in L^1_2(\R^3)$.
\end{proof}

\noindent The following lemmas provide the continuity of $\e \mapsto (a_{\e}, b_{\e}, \bar{u}_{\e})$ at the point $\e = 0$ and its differentiability on $\R_+^*$.

\begin{lemma} \label{lemma:continuity}
Let $0 \leq g \in L^1_2(\R^3) \cap L \log L(\R^3)$. Using the notations~\eqref{notationsannex}--\eqref{eq:notationMepscoeffs}, the application $\e \mapsto (a_{\e}, b_{\e}, \bar{u}_{\e})$ is continuous at the point $\e = 0$.
\end{lemma}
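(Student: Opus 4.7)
\medskip

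\noindent The plan is to establish separately that $\bar{u}_\e \to u_0$ and $(a_\e, b_\e) \to (a_0, b_0)$ as $\e \to 0^+$, where $(a_0, b_0, u_0)$ are the parameters of the Maxwellian $\M_0$. The core idea is to exploit the radial symmetry of $M_\e$ around $\bar{u}_\e$ to handle the translation parameter for free, and then reduce the remaining convergence to an asymptotic comparison between $\M_\e$ and the Gaussian $M_\e(v) = \exp(a_\e + b_\e|v - \bar{u}_\e|^2)$, whose moments are explicit.

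\medskip

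\noindent I will first observe that $\M_\e = M_\e/(1+\e M_\e)$ inherits the radial symmetry of $M_\e$ around $\bar{u}_\e$, hence $\int_{\R^3} v\,\M_\e\,\dd v = \rho_\e\,\bar{u}_\e$. Combined with the normalization~\eqref{notationsannex2}, this yields $\bar{u}_\e = u_\e$, and continuity at $\e=0$ follows from Lemma~\ref{lemma:continuityeasy}. The main work is for $(a_\e, b_\e)$, and the crucial intermediate quantity is $\zeta_\e := \e\,e^{a_\e} = \e\,\|M_\e\|_\infty$. Since the Fermi temperature $T_F(\rho_\e,\e)$ tends to $0$ while $T_\e \to T_0 > 0$ by Lemma~\ref{lemma:continuityeasy}, one has $\gamma_\e \to +\infty$; so for $\e$ small enough $\gamma_\e \geq \gamma^{\dag}$, and Proposition~\ref{prop:kappa1} provides $\zeta_\e \leq \tfrac{2}{3}(\gamma_\e/\gamma^{\dag})^{-3/2} \to 0$.

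\medskip

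\noindent With $\zeta_\e \to 0$ in hand, the pointwise sandwich $M_\e/(1+\zeta_\e) \leq \M_\e \leq M_\e$ (valid because $\e M_\e(v) \leq \zeta_\e$) passes to all moments, giving
\begin{equation*}
    \int_{\R^3} M_\e(v)\,\dd v \longrightarrow \rho_0, \qquad \int_{\R^3} M_\e(v)\,|v-u_\e|^2\,\dd v \longrightarrow 3\rho_0 T_0.
\end{equation*}
The explicit Gaussian identities $\int M_\e\,\dd v = e^{a_\e}(\pi/|b_\e|)^{3/2}$ and $\int M_\e\,|v-u_\e|^2\,\dd v = \tfrac{3e^{a_\e}}{2|b_\e|}(\pi/|b_\e|)^{3/2}$ then allow me to read off $|b_\e| \to 1/(2T_0)$ from the ratio, hence $b_\e \to b_0 = -1/(2T_0)$, and subsequently $a_\e \to a_0 = \log\!\bigl(\rho_0(2\pi T_0)^{-3/2}\bigr)$.

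\medskip

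\noindent The main obstacle is the a priori control of $(a_\e, b_\e)$ near $\e = 0$: nothing in the Fermi-Dirac framework forces $M_\e$ itself to remain bounded as the Pauli cutoff weakens, and in principle $a_\e$ could diverge to $\pm\infty$. All of that control is packaged into the single bound $\zeta_\e \to 0$ coming from Proposition~\ref{prop:kappa1}; once this is secured, the rest is a transparent sandwich argument on explicit Gaussian moments.
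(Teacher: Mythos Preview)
Your proof is correct and follows essentially the same route as the paper: both arguments hinge on showing $\e\|M_\e\|_\infty \to 0$ via $\gamma_\e \to +\infty$ and Proposition~\ref{prop:kappa1}, then comparing the moments of the Gaussian $M_\e$ with those of $\M_\e$ to recover the coefficients. Your pointwise sandwich $M_\e/(1+\zeta_\e) \leq \M_\e \leq M_\e$ is a clean variant of the paper's moment-difference estimate, and your separate handling of $\bar{u}_\e = u_\e$ via radial symmetry (which the paper defers to the proof of Lemma~\ref{lemma:derivMeps}) is a minor organizational difference.
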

\begin{proof}
Using the notations~\eqref{notationsannex} and~\eqref{notationsannex2}, we have
\begin{align}
\left| \int_{\R^3} M_{\e} \begin{pmatrix}
       1 \\ v \\ |v|^2
   \end{pmatrix}  \dd v - \int_{\R^3} \M_{\e} \begin{pmatrix}
       1 \\ v \\ |v|^2
   \end{pmatrix}  \dd v \right| &= \left|\int_{\R^3} \frac{\e M_{\e}^2}{1 + \e M_{\e}} \begin{pmatrix}
       1 \\ v \\ |v|^2
   \end{pmatrix}  \dd v \right| \leq \e \|M_{\e}\|_{\infty} \int_{\R^3} \M_{\e} \begin{pmatrix}
       1 \\ |v| \\ |v|^2
   \end{pmatrix}  \dd v \nonumber \\
   &\leq \e \|M_{\e}\|_{\infty} (\rho_{\e} +  3 \rho_{\e} T_{\e} + \rho_{\e} |u_{\e}|^2). \label{eq:prooflemmacontinuity}
\end{align}
Recall the notation, in this case,
$$
\gamma_{\e} = \frac{T_{\e}}{T_F(\rho_{\e}, \e)},
$$
where $\displaystyle T_F(\rho_{\e}, \e) = \frac12 \left(\frac{3 \rho_{\e} \, \varepsilon}{4 \pi} \right)^{2/3}$. By continuity at the point $\e = 0$ of the application $\e \mapsto (\rho_{\e},T_{\e})$, given by Lemma~\ref{lemma:continuityeasy}, we have $$\gamma_{\e} \xrightarrow{\e \to 0} + \infty.$$ 

\smallskip

\noindent Thereby, there exists $\e^* > 0$ such that $\gamma_{\e} \geq \gamma^{\dag}$ for any $\e \in (0,\e^*)$, where $\gamma^{\dag}$ is a universal constant defined in Proposition~\ref{prop:kappa1}. Then, from~\eqref{eq:propkappa11} in Proposition~\ref{prop:kappa1}, for any $\e \in (0,\e^*)$,
$$
\e \|M_{\e}\|_{\infty} \leq \frac23 \left( \frac{\gamma_{\e}}{\gamma_*}\right)^{-\frac{3}{2}},
$$
which vanishes as $\e \to 0$, since $\gamma_{\e}$ tends to $+ \infty$ in this limit. Combining this result, the continuity of $\e \mapsto (\rho_{\e},u_{\e},T_{\e})$ at $\e = 0$, given by Lemma~\ref{lemma:continuityeasy}, and Equation~\eqref{eq:prooflemmacontinuity}, we obtain
$$
\left| \int_{\R^3} M_{\e} \begin{pmatrix}
       1 \\ v \\ |v|^2
   \end{pmatrix}  \dd v - \int_{\R^3} \M_{\e} \begin{pmatrix}
       1 \\ v \\ |v|^2
   \end{pmatrix}  \dd v \right| \xrightarrow{\e \to 0} 0.
$$
The continuity of $\e \mapsto (\rho_{\e},u_{\e},T_{\e})$ at $\e = 0$ being equivalent to the statement
$$
 \int_{\R^3} \M_{\e} \begin{pmatrix}
       1 \\ v \\ |v|^2
   \end{pmatrix}  \dd v \xrightarrow{\e \to 0} \int_{\R^3} \M_0 \begin{pmatrix}
       1 \\ v \\ |v|^2
   \end{pmatrix}  \dd v,
$$
we finally conclude that
$$
\int_{\R^3} M_{\e} \begin{pmatrix}
       1 \\ v \\ |v|^2
   \end{pmatrix}  \dd v \xrightarrow{\e \to 0} \int_{\R^3} \M_{0} \begin{pmatrix}
       1 \\ v \\ |v|^2
   \end{pmatrix}  \dd v.
$$
Both $M_{\e}$ and $\M_0$ are gaussian distributions, which coefficients are continuously defined by the above moments, allowing to conclude to the continuity of these coefficients at the point $\e=0$.
\end{proof}

\begin{lemma} \label{lemma:derivMeps}
Let $0 \leq g \in L^1_2(\R^3) \cap L \log L(\R^3)$. Using the notations~\eqref{notationsannex}--\eqref{eq:notationMepscoeffs}, the application $\e \mapsto (a_{\e}, b_{\e}, \bar{u}_{\e})$ is $\mathcal{C}^1$ on $\R_+^*$. 
\end{lemma}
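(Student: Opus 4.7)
The strategy is to apply the implicit function theorem after a linear reparametrization of the coefficients. For $(\alpha,\beta,\gamma) \in \R \times \R^3 \times (-\infty,0)$, set
\[
M_{\alpha,\beta,\gamma}(v) := e^{\alpha + \beta\cdot v + \gamma|v|^2}, \qquad \M^\e_{\alpha,\beta,\gamma}(v) := \frac{M_{\alpha,\beta,\gamma}(v)}{1 + \e\, M_{\alpha,\beta,\gamma}(v)}.
\]
The map $(a,b,\bar u) \in \R \times (-\infty,0) \times \R^3 \mapsto (a + b|\bar u|^2,\, -2b\bar u,\, b)$ is a smooth diffeomorphism onto $\R \times \R^3 \times (-\infty,0)$, so it suffices to prove that $\e \mapsto (\alpha_\e,\beta_\e,\gamma_\e)$ is $\mathcal{C}^1$ on $\R_+^*$, where $(\alpha_\e,\beta_\e,\gamma_\e)$ denote the image of $(a_\e,b_\e,\bar u_\e)$ under this diffeomorphism; in particular $M_\e = M_{\alpha_\e,\beta_\e,\gamma_\e}$ and $\M_\e = \M^\e_{\alpha_\e,\beta_\e,\gamma_\e}$.

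Next, I would introduce
\[
F(\e,\alpha,\beta,\gamma) \;:=\; \int_{\R^3} \M^\e_{\alpha,\beta,\gamma}(v) \begin{pmatrix} 1 \\ v \\ |v|^2 \end{pmatrix} \dd v \;-\; \begin{pmatrix} \rho_\e \\ \rho_\e u_\e \\ 3\rho_\e T_\e + \rho_\e|u_\e|^2 \end{pmatrix},
\]
defined on $\R_+^* \times \R \times \R^3 \times (-\infty,0)$, so that $F(\e,\alpha_\e,\beta_\e,\gamma_\e) = 0$ by construction. By Lemma~\ref{lemma:continuityeasy}, the right-hand term is $\mathcal{C}^1$ in $\e$. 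For the integral term, the estimate $\M^\e_{\alpha,\beta,\gamma}(v) \leq e^{\alpha + \beta\cdot v + \gamma|v|^2}$ provides Gaussian decay in $v$, uniform on compact neighborhoods of any $(\alpha_0,\beta_0,\gamma_0)$ with $\gamma_0 < 0$; the $\e$-, $\alpha$-, $\beta$-, $\gamma$-derivatives of the integrand are similarly controlled (at worst by polynomials in $v$ times such Gaussian factors), so dominated convergence justifies differentiation under the integral sign and shows that $F$ is $\mathcal{C}^1$ on its domain.

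The core step is to show that the partial Jacobian $D_{(\alpha,\beta,\gamma)} F$ is invertible at every solution point. Setting $w := \M^\e_{\alpha,\beta,\gamma}(1 - \e\, \M^\e_{\alpha,\beta,\gamma})$, which is strictly positive almost everywhere, a direct computation yields
\[
\partial_\alpha \M^\e_{\alpha,\beta,\gamma} = w, \qquad \partial_{\beta_j} \M^\e_{\alpha,\beta,\gamma} = v_j\, w, \qquad \partial_\gamma \M^\e_{\alpha,\beta,\gamma} = |v|^2\, w.
\]
Consequently $D_{(\alpha,\beta,\gamma)} F$ is exactly the $5 \times 5$ Gram matrix of the linearly independent family $(1, v_1, v_2, v_3, |v|^2)$ with respect to the strictly positive weight $w\, \dd v$, and is therefore symmetric positive definite, hence invertible.

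Applying the implicit function theorem at an arbitrary $\e_0 > 0$ then produces a unique local $\mathcal{C}^1$ solution of $F = 0$ near $(\e_0,\alpha_{\e_0},\beta_{\e_0},\gamma_{\e_0})$. By the uniqueness of the Fermi-Dirac equilibrium with prescribed mass, momentum and energy given in~\eqref{eqdef:fermidiracequilibrium}, this local solution must coincide with $\e \mapsto (\alpha_\e,\beta_\e,\gamma_\e)$, which is thus $\mathcal{C}^1$ near $\e_0$; since $\e_0$ was arbitrary, this concludes the proof. The only substantive technical point is the verification of the dominated-convergence estimates, which is straightforward thanks to $\gamma < 0$; invertibility of the Jacobian follows essentially for free from its Gram-matrix structure.
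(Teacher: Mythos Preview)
Your argument is correct and complete. The key steps --- the $\mathcal{C}^1$ regularity of $F$ via dominated convergence, the identification of $D_{(\alpha,\beta,\gamma)}F$ as a Gram matrix with respect to a strictly positive weight, and the appeal to Lu's uniqueness result to match the implicit-function solution with the actual coefficients --- are all sound. (One small remark: your use of $\gamma$ for the coefficient of $|v|^2$ clashes with the paper's $\gamma = T/T_F(\rho,\e)$; you may want to rename it to avoid confusion.)

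The paper takes a more explicit route. It first dispatches $\bar u_\e$ by observing that $\M_\e(\cdot + \bar u_\e)$ is radial, whence $\bar u_\e = u_\e$, already known to be $\mathcal{C}^1$ by Lemma~\ref{lemma:continuityeasy}. For $(a_\e,b_\e)$ it then invokes the concrete formulas from Lu~\cite{lu2001spatially}: with $I_s(\tau) = \int_0^\infty r^s(1+\tau e^{r^2})^{-1}\,\dd r$ and $P = I_4 I_2^{-5/3}$, one has
\[
\left(\frac{\e}{4\pi}\right)^{2/3} P\!\left(\frac{1}{\e e^{a_\e}}\right) = \frac{3 T_\e}{\rho_\e^{2/3}}, \qquad b_\e = -\left(\frac{4\pi}{\e\rho_\e} I_2\!\left(\frac{1}{\e e^{a_\e}}\right)\right)^{2/3},
\]
and since $P$ is $\mathcal{C}^1$ with $P'>0$, the claim follows by composition. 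Your implicit-function-theorem argument is more conceptual and self-contained: it bypasses the explicit inversion of $P$, works in all five parameters at once, and would generalize immediately to other entropies or moment constraints. The paper's approach, by contrast, leverages the one-dimensional structure already extracted by Lu and is somewhat shorter once those formulas are taken for granted.
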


\begin{proof}
Since the distribution $M_{\e}(\cdot + \Bar{u}_{\e})$ is radially symmetric, so is $\M_{\e}(\cdot + \Bar{u}_{\e})$, hence $\Bar{u}_{\e} = u_{\e}$, which, by Lemma~\ref{lemma:continuityeasy}, is $\mathcal{C}^1$ on $\R_+^*$. We then define
$$
g_{\e} : = g(\cdot + u_{\e}), \qquad \mathcal{N}_{\e} := \M_{\e}(\cdot + u_{\e}) \equiv \M_{\e}^{\vi_{\e}^{-1}(g_{\e})} \quad \text{and} \quad N_{\e} := M_{\e}(\cdot + u_{\e}),
$$
so that, for any $\e > 0$, it holds that
$$
N_{\e} = \exp \left(a_{\e} + b_{\e} |v|^2 \right).
$$
As in~\eqref{notationsannex2}, we let $\rho_{\e}, T_{\e} > 0$ be such that
$$
\int_{\R^3} \frac{g_{\e}}{1 + \e g_{\e}} \begin{pmatrix}
       1 \\ v \\ |v|^2
\end{pmatrix} \dd v = \begin{pmatrix}
       \rho_{\e} \\ 0 \\ 3 \rho_{\e} T_{\e}
   \end{pmatrix}.
$$
Let us now show that $\e \mapsto (a_{\e}, b_{\e})$ is $\mathcal{C}^1$ on $\R_+^*$. It is proven in~\cite[proof of Proposition 3]{lu2001spatially} that, letting
$$
I_s(\tau) := \int_{0}^{\infty} \frac{r^s}{1 + \tau \, e^{r^2}} \, \dd r, \qquad P(\tau) := I_4(\tau) \, I_2(\tau)^{-5/3}, \qquad \tau \in \R_+^*,
$$
the function $P$ is an increasing $\mathcal{C}^1$ function from $\R_+^*$ to $\left(\frac{3^{5/3}}{5}, +\infty \right)$, with $P' > 0$ on $\R_+^*$. Therefore it is invertible, and $P^{-1}$ is also $\mathcal{C}^1$. All the more, a dominated convergence argument ensures that $I_2$ is $\mathcal{C}^1$ on $\R_+^*$. It is moreover shown in~\cite[proof of Proposition 3]{lu2001spatially} that
$$
\left(\frac{\e}{4 \pi} \right)^{2/3} P \left( \frac{1}{\e e^{a_{\e}}} \right) = \frac{3 \rho_{\e} T_{\e}}{\rho_{\e}^{5/3}}, \qquad b_{\e} = -\left( \frac{4 \pi }{\e \rho_{\e}} \, I_2\left( \frac{1}{\e e^{a_{\e}}} \right) \right)^{\frac23}.
$$
By Lemma~\ref{lemma:continuityeasy}, the application $\e \mapsto (\rho_{\e}, T_{\e})$ is $\mathcal{C}^1$ on $\R_+^*$, hence so is the application $\e \mapsto (a_{\e},b_{\e})$, as a composition of $\mathcal{C}^1$ applications.
\end{proof}

\begin{lemma} \label{lem:bdMeps}
Let $0 \leq g \in L^1_2(\R^3) \cap L \log L(\R^3)$. Using the notation~\eqref{notationsannex}, for any $\overline{\e} > 0$, there exist $C>0$ and $\eta > 0$ such that for any $\e \in [0,\overline{\e}]$ and $v \in \R^3$, we have
\begin{equation} \label{eqlem:bdMeps1}
M_{\e}(v) \leq C \, e^{- \eta |v|^2},
\end{equation}
and
\begin{equation} \label{eqlem:bdMeps2}
    |\log M_{\e}(v)| \leq C (1 + |v|^2).
\end{equation}
\end{lemma}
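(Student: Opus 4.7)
The plan is to leverage the representation $M_{\e}(v) = \exp(a_{\e} + b_{\e}|v - \bar{u}_{\e}|^2)$ together with the regularity statements for $\e \mapsto (a_{\e}, b_{\e}, \bar{u}_{\e})$ already established in Lemmas~\ref{lemma:continuity} and~\ref{lemma:derivMeps}. Combined, these lemmas give continuity of the three coefficient maps on $\R_+$ and $\mathcal{C}^1$ regularity on $\R_+^*$; in particular the three maps are continuous on the compact interval $[0, \overline{\e}]$, and hence bounded there.

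From this compactness I would extract constants $A \geq 0$ and $U \geq 0$ such that $|a_{\e}| \leq A$ and $|\bar{u}_{\e}| \leq U$ for every $\e \in [0, \overline{\e}]$, and constants $B, B' > 0$ such that $-B' \leq b_{\e} \leq -B$ on this interval. The upper bound by $-B < 0$ is the delicate point: it uses the fact that $b_{\e} < 0$ pointwise (since $\M_{\e}$, resp.\ $\M_0$, is a Fermi-Dirac, resp.\ Maxwellian, distribution of finite mass and positive temperature) together with continuity up to $\e = 0$ to promote this pointwise negativity to a uniform strict negativity on the compact $[0, \overline{\e}]$.

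Granted these bounds, the first inequality~\eqref{eqlem:bdMeps1} follows from the elementary estimate $|v - \bar{u}_{\e}|^2 \geq \tfrac{1}{2}|v|^2 - |\bar{u}_{\e}|^2 \geq \tfrac{1}{2}|v|^2 - U^2$, yielding
\[
M_{\e}(v) \leq e^{A}\, e^{-B|v - \bar{u}_{\e}|^2} \leq e^{A + BU^2}\, e^{-(B/2)|v|^2},
\]
so one may take $\eta = B/2$ and absorb the prefactor into $C$. The second inequality~\eqref{eqlem:bdMeps2} follows just as directly from $|v - \bar{u}_{\e}|^2 \leq 2|v|^2 + 2U^2$, giving
\[
|\log M_{\e}(v)| \leq |a_{\e}| + |b_{\e}|\, |v - \bar{u}_{\e}|^2 \leq A + 2B'(|v|^2 + U^2),
\]
which is bounded by a suitable multiple of $1 + |v|^2$. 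Taking the maximum of the constants produced by the two arguments yields a single $C$ that works simultaneously for both bounds.

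The main (in fact only) obstacle is the uniform strict negativity $b_{\e} \leq -B$; everything else is a compactness-plus-continuity argument. This obstacle is resolved precisely by Lemma~\ref{lemma:continuity}, whose role here is to allow the endpoint $\e = 0$ to be included in the compact set without losing the negativity of $b_{\e}$ in the limit.
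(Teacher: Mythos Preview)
Your proposal is correct and follows essentially the same approach as the paper's proof: both use the continuity of $\e \mapsto (a_{\e}, b_{\e}, \bar{u}_{\e})$ on the compact $[0,\overline{\e}]$ (from Lemmas~\ref{lemma:continuity} and~\ref{lemma:derivMeps}) to extract uniform bounds, single out the uniform strict negativity of $b_{\e}$ as the only nontrivial point, and then conclude via the same elementary quadratic inequalities on $|v-\bar u_{\e}|^2$.
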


\begin{proof}
We denote 
$$
\underline{a} = \sup_{\e \in [0,\overline{\e}]} |a_{\e}|, \qquad \underline{b} = -\sup_{\e \in [0,\overline{\e}]} |b_{\e}|, \qquad \overline{b} = -\inf_{\e \in [0,\overline{\e}]} |b_{\e}| \quad \text{and}  \quad \underline{u} = \sup_{\e \in [0,\overline{\e}]} |u_{\e}|.
$$
Combining the results of Lemmas~\ref{lemma:continuity} and~\ref{lemma:derivMeps}, the application $\e \mapsto (a_{\e},b_{\e},u_{\e})$ is continuous on $\R_+$, from which we deduce that $\underline{a}$, $\underline{b}$ and $\underline{u}$ are finite. 

\smallskip

\noindent Moreover, as $M_{\e} \in L^1(\R^3)$ for all $\e \in [0,\overline{\e}]$, the application $\e \mapsto b_{\e}$ is (strictly) negative on $[0,\overline{\e}]$, so that, as $\e \mapsto b_{\e}$ is continuous on $[0,\overline{\e}]$, we have $\overline{b} < 0$. 

\medskip

\noindent Therefore, for any $0 \leq \e \leq \overline{\e}$ and $v \in \R^3$, we have
$$
|\log M_{\e}(v)| = |a_{\e} + b_{\e}|v-u_{\e}|^2| \leq \underline{a} + 2 |\underline{b}| \,  \underline{u}^2 +  2 |\underline{b}| \, |v|^2,
$$
and, since $|v-u_{\e}|^2 \geq \frac12 |v|^2 - |u_{\e}|^2 \geq \frac12 |v|^2 - \underline{u}^2$ and $\overline{b} < 0$,
$$
M_{\e} (v) = e^{a_{\e} + b_{\e}|v-u_{\e}|^2} \leq e^{\underline{a} + \overline{b}|v-u_{\e}|^2} \leq e^{\underline{a} + \left|\overline{b} \right| \underline{u}^2 + \frac12\overline{b}|v|^2}.
$$
Letting $\eta = -\frac12 \overline{b}$ and $C = \max \left(\underline{a} + 2 |\underline{b}| \,  \underline{u}^2, \,  e^{\underline{a} + \left|\overline{b} \right| \underline{u}^2} \right)$ yields the result.
\end{proof}

\section*{Acknowledgements}
\noindent I thank Bertrand Lods with whom I had fruitful discussions and Matthieu Dolbeault who suggested to me the proof of the nice inequality~\eqref{ineq:Lbda1}.

\bigskip

\noindent I have no competing interests to declare that are relevant to the content of this article.

\bibliographystyle{plain}
\bibliography{biblio}

\end{document}